\documentclass[10pt]{article}
\usepackage{latexsym} 
\usepackage[all]{xy}
\usepackage{amsfonts} 
\usepackage{color}
\usepackage{cite}
\usepackage{authblk}
\usepackage{amsmath}
\usepackage{mathrsfs}
\usepackage{extarrows}
\usepackage{amssymb}
\usepackage{tikz-cd}
\usepackage{appendix}
\usetikzlibrary{calc}
\usepackage{stmaryrd}
\usepackage[utf8]{inputenc}
\usepackage{geometry}
\usepackage{amsthm}
\usepackage{todonotes}
\usepackage{eucal}
\usepackage{fullpage}
\usepackage[colorlinks=true,linkcolor=black,citecolor=blue,urlcolor=blue]{hyperref}

\title{\textbf{Deformation Theory of $\mathbb{E}_n$-Monoidal Categories}}
\author{Yining Chen}
\affil{}

\begin{document}

\maketitle

\newtheorem{definition}{Definition}[section]
\newtheorem{theorem}[definition]{Theorem}
\newtheorem*{theorem*}{Theorem}
\newtheorem{prop}[definition]{Proposition}
\newtheorem{lemma}[definition]{Lemma}
\newtheorem{remark}[definition]{Remark}
\newtheorem{example}[definition]{Example}
\newtheorem{corollary}[definition]{Corollary}
\newtheorem{fact}[definition]{Fact}

\def\E{\mathbb{E}}
\def\Q{\mathbb{Q}}
\def\T{\mathbb{T}}
\def\R{\mathbb{R}}
\def\L{\mathbb{L}}
\def\Z{\mathbb{Z}}
\def\T{\mathbb{T}}
\def\RL{\mathbb{R}{\mathcal{L}}}
\def\RSpec{\mathbb{R}\underline{\mathrm{Spec}}}

\def\Tot{\mathrm{Tot}} 
\def\End{\mathrm{End}} 
\def\dr{\mathrm{dR}} 
\def\Crit{\mathrm{Crit}}
\def\Sym{\mathrm{Sym}}
\def\Out{\mathrm{Out}}
\def\Aut{\mathrm{Aut}}
\def\Tor{\mathrm{Tors}}
\def\colim{\mathrm{colim}}
\def\lim{\mathrm{lim}}
\def\Map{\mathrm{Map}}
\def\Hom{\mathrm{Hom}}
\def\holim{\mathrm{holim}}
\def\hocolim{\mathrm{hocolim}}
\def\id{\mathrm{id}}
\def\Ho{\mathrm{Ho}}
\def\Der{\mathrm{Der}}
\def\Spec{\mathrm{Spec}}
\def\Ob{\mathrm{Ob}}
\def\Mor{\mathrm{Mor}}
\def\deg{\mathrm{deg}}
\def\LocSys{\mathrm{LocSys}}
\def\pt{\mathrm{pt}}
\def\Bun{\mathrm{Bun}}

\def\Mapp{\mathbf{Map}}
\def\CAlg{\mathbf{CAlg}}
\def\Rep{\mathbf{Rep}}
\def\QCoh{\mathbf{QCoh}}
\def\Gpd{\mathbf{Gpd}}
\def\Set{\mathbf{Set}}
\def\Stk{\mathbf{Stk}}
\def\PreStk{\mathbf{PreStk}}
\def\dStk{\mathbf{dStk}}
\def\Ch{\mathbf{Ch}}
\def\P{\mathbf{P}}
\def\Pr{\mathbf{Pr}}
\def\Aff{\mathbf{Aff}}
\def\dAff{\mathbf{dAff}}
\def\Alg{\mathbf{Alg}}
\def\Mod{\mathbf{Mod}}
\def\hom{\mathbf{Hom}}
\def\RMod{\mathbf{RMod}}
\def\LMod{\mathbf{LMod}}
\def\Sch{\mathbf{Sch}}
\def\cdga{\mathbf{cdgA}}
\def\dgMod{\mathbf{dgMod}}
\def\DR{\mathbf{DR}}
\def\funl{\mathbf{Fun}^{L}}
\def\fun{\mathbf{Fun}}
\def\Moduli{\mathbf{Moduli}}

\def\A{\mathcal{A}} 
\def\C{\mathcal{C}}
\def\D{\mathcal{D}}
\def\F{\mathcal{F}}
\def\G{\mathcal{G}}
\def\K{\mathcal{K}}
\def\Z{\mathcal{Z}}

\def\X{\mathscr{X}}
\def\Dn{\mathscr{D}^{(n)}}

\def\g{\mathfrak{g}}
\def\m{\mathfrak{m}}

\def\Mfld{\mathcal{M}\mathsf{fld}}
\def\Disk{\mathcal{D}\mathsf{isk}}
\def\mfld{\mathsf{Mfld}}
\def\disk{\mathsf{Disk}}

\def\Cat{\widehat{\mathbf{Cat}}_{\infty}}
\def\Catk{\widehat{\mathbf{Cat}}_{\infty}(\mathcal{K})}

\def\aug{\text{aug}}
\def\Art{\text{Art}}

\begin{abstract}
In this paper, we prove that the naive deformation problem of an $\mathbb{E}_n$-monoidal stable $k$-linear $\infty$-category $\mathcal{C}$ is a $2$-proximate formal $\mathbb{E}_{n+2}$-moduli problem, whose corresponding formal moduli problem is controlled by the non-unital $\mathbb{E}_{n+2}$-algebra $\mathrm{fib}\big(\mathrm{End}_{\mathcal{Z}_{\mathbb{E}_n}(\mathcal{C})}(1)\rightarrow \mathrm{End}_{\mathcal{C}}(1)\big)$, where $\mathcal{Z}_{\mathbb{E}_n}(\mathcal{C})$ is the $\mathbb{E}_n$-center of $\mathcal{C}$. If $\mathcal{C}$ is rigid monoidal and tamely compactly generated by unobstructible objects, then this naive deformation problem is equivalent to the formal moduli problem. We also prove a uniqueness theorem for formal deformations of certain formal moduli problems, which can be applied to the $\mathbb{E}_1$ and $\mathbb{E}_2$-monoidal deformation problems of $\mathbf{Rep}(G)$ for a reductive algebraic group $G$ with a simple Lie algebra $\mathfrak{g}=T_e G$. Finally, we show factorization homology is compatible with deformations.
\end{abstract}

\tableofcontents

\section*{Introduction} 
\addcontentsline{toc}{section}{Introduction}
\newtheorem*{theoremA*}{Theorem A}
\newtheorem*{theoremB*}{Theorem B}
The deformation theory of $\E_n$-monoidal categories is strongly related to $n$-shifted deformation quantization in \cite{To14}, where an $n$-shifted Poisson structure on a certain derived stack $X$ can induce an $\E_n$-monoidal formal deformation of $\QCoh(X)$. This is analogous to the classical situation, where deformation quantization is related to the deformation theory of associative algebras.

\paragraph{Deformation Quantization}
Let $A$ be a commutative algebra over a field $k$ of characteristic $0$ with a Poisson bracket. An \textit{associative deformation} of $A$ over an augmented commutative algebra $B\rightarrow k$ is a (flat) right $B$-linear associative algebra $A_{B}$ such that $A_B\otimes _B k\cong A$. Let $A_\hbar=A[[\hbar]]$ be such an associative deformation over $B=k[[\hbar]]$. Then there is a bracket on $A_\hbar\otimes_{k[[\hbar]]}k=A_\hbar/(\hbar\cdot A_\hbar)\cong A$ defined by $[a,b]=\frac{a'b'-b'a'}{\hbar}\ \text{mod}\ \hbar$ where $a,\ b\in A$ and $a'$ (resp. $b'$) is a lift of $a$ (resp. $b$) in $A_\hbar$. The \textit{classical deformation quantization problem} asks for a canonical associative formal deformation $A_\hbar$ of $A$ over $k[[\hbar]]$ such that the induced bracket above is equivalent to the Poisson bracket in $A$. Therefore this problem is related to the associative deformation problem of $A$.

A concrete example is to let $A=C^{\infty}(M)$ over $k=\mathbb{R}$, consisting of $\mathbb{R}$-valued smooth functions on $M$ where $M$ is a smooth manifold. A Poisson structure on $M$ is a bilinear map $C^\infty(M)\times C^\infty(M)\rightarrow C^\infty(M)$ making $C^\infty(M)$ be a classical Poisson algebra, which is equivalent to a \textit{Poisson bivector} in $\Gamma(M,\bigwedge^2 T_M)$ where $T_M$ is the tangent bundle of $M$. With the Schouten bracket, $T_{poly}(M):=\Gamma\big(M,Sym_\mathbb{R}(T_M[-1])\big)[1]$ is a \textit{differential graded Lie algebra (dgLa)} whose differential is $0$, and a Poisson bivector $\alpha\in T^{1}_{poly}(M)=\Gamma(M,\bigwedge^2 T_M)$ will be a solution of its \textit{Maurer-Cartan equation} i.e. $d\alpha+\frac{1}{2}[\alpha,\alpha]=0$.

A philosophy tells us that every reasonable (formal) deformation problem over a field $k$ of characteristic zero should be controlled by some dgLa, which is made precise and proved by Lurie in \cite[Part IV]{SAG} using higher algebra tools. 

\begin{theorem*}[Lurie]
Let the deformation problem $\mathrm{Def}: \mathbf{Alg}_{k}^{(n),\mathrm{Art}}\rightarrow s\mathbf{Set}$ be a formal $\E_n$-moduli problem. Then 
$$\mathrm{Def}(-)\simeq \Map_{\Alg_{k}^{(n),\mathrm{aug}}}\big(\mathscr{D}^{(n)}(-),k\oplus R\big)$$
for some non-unital $\mathbb{E}_n$-algebra $R$, where $\mathscr{D}^{(n)}:(\Alg_{k}^{(n),\mathrm{aug}})^{op}\rightarrow \Alg_{k}^{(n),\mathrm{aug}}$ is the $\E_n$-Koszul duality functor. When restricted to commutative Artin algebras i.e. $\mathrm{Def}:\mathbf{CAlg}_{k}^{\mathrm{Art}}\rightarrow s\Set$, $\mathrm{Def}(-)$ will be equivalent to $\Map_{\mathbf{dgLa}_k}\big(\mathscr{D}(-),\g\big)$ where $\g\simeq R[n-1]$ and $\mathscr{D}:(\mathbf{CAlg}_{k}^{\mathrm{aug}})^{op}\rightarrow \mathbf{dgLa}_k$ is the Koszul duality functor.
\end{theorem*}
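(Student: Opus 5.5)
The plan follows Lurie's strategy in \cite[Part IV]{SAG} via a deformation context and a Koszul duality adjunction. The $\E_n$ case is treated first, and the commutative case is obtained in parallel by replacing the $\E_n$ operad with $\mathbf{CAlg}$ and $\E_n$-Koszul duality with Chevalley--Eilenberg/Koszul duality into dgLa.

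\textbf{Setting up the duality.} Take the deformation context $(\Alg_k^{(n),\aug},\{E_m=k\oplus k[m]\}_{m\ge 0})$. The $\E_n$-Koszul duality functor is
$$\Dn(A)=\mathrm{Hom}_{\Alg^{(n),\aug}_k}\text{-dual of }A,$$
that is, the centralizer of the augmentation $A\to k$. It is self-adjoint on the right, $\Dn:(\Alg_k^{(n),\aug})^{op}\rightleftarrows \Alg_k^{(n),\aug}$. Given a non-unital $R$, the functor $A\mapsto \Map(\Dn(A),k\oplus R)$ sends pullback squares of Artin algebras along small extensions to pullbacks and sends $k$ to a point. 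Here one uses that $\Dn$ carries these pullbacks to pushouts; this is checked on square-zero extensions $A'=A\times_{k\oplus k[m]}k$, whose duals are free over $\Dn(k\oplus k[m])$, the free algebra on $k[-m-1]$. So each $R$ yields a formal moduli problem, and this gives a functor $\Psi:\Alg_k^{(n),\aug}\to \mathrm{Moduli}^{(n)}$.

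\textbf{Proving $\Psi$ is an equivalence.} This is Lurie's criterion: $\Psi$ is an equivalence provided
\begin{enumerate}
\item $\Dn$ restricted to Artin algebras is fully faithful with image closed under the relevant colimits, that is, the unit $A\to\Dn\Dn(A)$ is an equivalence for Artin $A$;
\item every formal moduli problem is a colimit of corepresentables $\Map(A,-)$, via an Artin tower;
\item the tangent fibers detect equivalences, with the tangent complex of $\Psi(R)$ at $E_m$ equal to $R[m]$ up to shift.
\end{enumerate}
Item (3) is immediate from $\Map(\text{Free}(k[-m-1]),k\oplus R)\simeq \Omega^{\infty}R[m+1]$.

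\textbf{Biduality, the main obstacle.} I expect (1) to be the hard step: showing $A\simeq\Dn\Dn A$ for every Artin $\E_n$-algebra. The approach is induction along the small-extension tower $A=A_r\to\cdots\to A_0=k$. The dual of a pullback is a pushout, and the double dual must return the pullback, which requires a finiteness and connectivity argument. The key input is that $\Dn(A)$ is of the form $k\oplus V$ with $V$ of finite type and suitably coconnective. I would identify $\Dn$ with the $n$-fold iterated bar construction, $\Dn(A)\simeq (\mathrm{Bar}^{(n)}A)^\vee$. Convergence of the bar spectral sequence for coconnective finite-type modules then gives biduality.

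\textbf{The commutative case.} Restrict along $\mathbf{CAlg}_k^{\Art}\to\Alg_k^{(n),\Art}$ and run the same argument with Koszul duality $\mathscr{D}(A)=$ the Chevalley--Eilenberg-dual dgLa. Both moduli problems have tangent complexes $R[m]$ and $\g[m+1]$ respectively, so the identification $\g\simeq R[n-1]$ follows by matching tangent complexes. This uses the fact that the free $\E_n$-algebra relates to $\mathbb{P}_n$/Lie structures with bracket of degree $1-n$ in characteristic $0$.
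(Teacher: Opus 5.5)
Your strategy is Lurie's deformation-theory argument, which is what the paper relies on. The paper does not reprove the statement: it cites the equivalence $\Psi$ from \cite[Thm. 15.0.0.9]{SAG} and the dgLa equivalence from \cite[Thm. 13.0.0.2]{SAG}, then compares tangent complexes.

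The genuine problem is the degree bookkeeping in the step meant to produce $\g\simeq R[n-1]$. The $\E_n$-Koszul dual of $k\oplus k[m]$ is $\mathrm{Free}_{\E_n}(k[-n-m])$, not the free algebra on $k[-m-1]$; your formula is only the $n=1$ case. In general $\Dn(k\oplus V)\simeq\mathrm{Free}_{\E_n}(V^\vee[-n])$. Your own description $\Dn(A)\simeq(\mathrm{Bar}^{(n)}A)^\vee$ gives the same thing, since each of the $n$ bar constructions shifts the augmentation ideal once. Hence $\Psi(k\oplus R)(k\oplus k[m])\simeq\Omega^\infty R[n+m]$ and $\T_{\Psi(k\oplus R)}\simeq R[n]$, as in the paper's computation of $\Dn(k\oplus k[m])$.

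With your formula the tangent complex would be $R[1]$, and matching it against $\T_{\Psi(\g)}\simeq\g[1]$ gives $\g\simeq R$. Your later comparison of $R[m]$ with $\g[m+1]$ gives yet another shift. Neither yields $R[n-1]$, so the $n$-dependent shift, which is the whole content of the second assertion, is stated rather than derived.

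Once this is fixed, the commutative part needs no $\mathbb{P}_n$ input. The restriction of $\mathrm{Def}$ along $\CAlg_k^{\Art}\to\Alg_k^{(n),\Art}$ is still a formal moduli problem, because the forgetful functor preserves the relevant pullbacks. So it equals $\Psi(\g)$ for some dgLa $\g$. Since the test objects $k\oplus k[m]$ are commutative, the two tangent complexes agree, giving $\g[1]\simeq R[n]$.

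Two smaller points.

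First, the formal-moduli property of $A\mapsto\Map(\Dn(A),k\oplus R)$ is not an independent check. Being self-adjoint, $\Dn$ only turns colimits into limits. It sends the small extension $A'=A\times_{k\oplus k[m]}k$ to the cell attachment $\Dn(A)\amalg_{\mathrm{Free}_{\E_n}(k[-n-m])}k$; this is not ``free over'' $\Dn(k\oplus k[m])$. That fact follows from biduality on a class of algebras that contains the duals of Artin algebras and is closed under such cell attachments, i.e.\ Lurie's axiom (D2). So it belongs after your biduality step, not before it.

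Second, in your item (2) the colimit presentation must be sifted, since $\Psi$ only commutes with sifted colimits. Lurie uses geometric realizations of prorepresentable functors for this.
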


If we suppose some deformation problem is controlled by a dgLa $\g$, then for every commutative (pro-)Artin algebra $B$, the deformation set on $B$ should be 
$$\pi_0\Map_{\mathbf{dgLa}_k}(\mathscr{D}(B),\g)\simeq \mathrm{MC}(\g\otimes_k \m_B)/\sim$$
the quotient set of solutions of the Maurer-Cartan equation by gauge action. Therefore here $T_{poly}(M)$ is related to some deformation problem and a solution of its Maurer-Cartan equation i.e. a Poisson structure $\alpha$ will formally induce a formal deformation $\alpha\cdot \hbar\in \mathrm{MC}\big(T_{poly}(M)\otimes_k \hbar\cdot k[[\hbar]]\big)/\sim$ of that problem. 
\\

On the other hand, there is another dgLa $HH(A)[1]$ the shifted Hochschild cohomology complex of the algebra $A=C^{\infty}(M)$, which is related to the associative deformation problem of $A$ in some sense. In \cite[Sec. 4.6.2]{Kon03}, Kontsevich shows there is an $L_\infty$-equivalence between $T_{poly}(M)$ and $D_{poly}(M)$ where $D_{poly}(M)$ is an equivalent subalgebra of $HH(A)[1]$ consisting of \textit{polydifferential operators}. The value on $k[[\hbar]]$ of the deformation problem associated to the dgLa $D_{poly}(M)$ classifies star-products on $A$ up to gauge equivalences. A \textit{star-product} on $A$ is an associative $\mathbb{R}[[\hbar]]$-linear product on $A[[\hbar]]$ given by the formula
$$(f,g)\mapsto f\star g=fg+\hbar B_1(f,g)+\hbar^2B_2(f,g)+\cdots \in A[[\hbar]]$$
for $f,g\in A\subseteq A[[\hbar]]$.

A Poisson structure $\alpha\in T^1_{poly}(M)$ satisfying $[\alpha,\alpha]=0$ will formally induce $\alpha\cdot\hbar\in T^1_{poly}(M)[[\hbar]]$ satisfying $[\alpha\cdot \hbar,\alpha\cdot \hbar]=0$. By the $L_\infty$-equivalence above, $\alpha\cdot\hbar$ corresponds to an element in $\mathrm{MC}\big(D_{poly}(M)\otimes_k \hbar\cdot k[[\hbar]]\big)/\sim$ which is supposed to give a star-product on $A$ and the resulting associative algebra $A[[\hbar]]$ is called the \textit{deformation quantization with respect to $\alpha$}, which is compatible with the Poisson bracket on $A$.

\paragraph{Deformation Theory of Algebras}
We can study the deformation problem of associative algebras or even differential graded algebras in a more systematic way. There is another homotopically equivalent model for differential graded algebras i.e. \textit{$A_\infty$-algebras}. For a $k$-linear $A_\infty$-algebra $A$, as discussed in \cite[p13]{KoS00}, the $A_\infty$-deformation problem of $A$ is controlled by the truncated Hochschild cohomology complex
$$HH_{+}(A)=\prod_{n\geq 1}\mathrm{Hom}_{\mathbf{Mod}_k}(A^{\otimes n},A)[-n]$$
which is identified with the fiber $\mathrm{fib}(HH(A)\rightarrow A)$. It is a special case of the deformation problem of $\E_n$-algebras, because an $A_\infty$-algebra or a differential graded algebra is equivalent to an $\E_1$-algebra. Given an $\E_n$-algebra $A$ over $k$, its deformation over an Artin $\E_{n+1}$-algebra $B$ is a right $B$-linear $\E_n$-algebra $A_B$ satisfying $A_B\otimes_B k\simeq A$. In Theorem \ref{2.6}, we show the corresponding formal moduli problem of the $\E_n$-algebra $A$ is controlled by the (non-unital) $\E_{n+1}$-algebra $\mathrm{fib}(\Z_{\E_n}(A)\rightarrow A)$ where $\Z_{\E_n}(A)$ is the $\E_n$-center of $A$. If $n=1$, $\Z_{\E_1}(A)\simeq HH(A)$.

\paragraph{Deformation Theory of Categories}
From \cite[Cor. 5.1.2.6]{HA}, we know every $\E_n$-algebra $A$ corresponds to an $\E_{n-1}$-monoidal category $\LMod_A$. So the deformation problem of $\E_n$-algebras shares some relations with the deformation problem of $\E_{n-1}$-monoidal categories. In fact, in Corollary \ref{algebra} we prove their corresponding formal moduli problems are equivalent.

In \cite[Sec. 16.6]{SAG}, Lurie proves the deformation problem of a plain stable $k$-linear $\infty$-category $\C$ is a $2$-proximate formal $\E_2$-moduli problem whose corresponding formal moduli problem is equivalent to 
$$\Map_{\Alg_{k}^{(2),\aug}}(\mathscr{D}^{(2)}(-),k\oplus \mathrm{End}_{\Z_{\E_0}(\C)}(1))$$
where $\mathscr{D}^{(2)}$ is the $\E_2$-Koszul duality functor and $\mathrm{End}_{\Z_{\E_0}(\C)}(1))\simeq \End_{\End(\C)}(1)$. He proves further that if $\C$ satisfies more properties i.e. tamely compactly generated by unobstructible objects, then this deformation problem is itself a formal $\E_2$-moduli problem. We prove a similar result for $\E_n$-monoidal stable $k$-linear $\infty$-categories in Theorem \ref{mainA}, which is stated in \cite[Variation 9.20]{Lur10}. 

\begin{theoremA*}[Thm. \ref{mainA}]
Let $\C$ be a compactly generated $\E_n$-monoidal stable $k$-linear $\infty$-category. Then the deformation problem $\E_n\mathrm{CatDef}_{\C}^{c}$ consisting of compactly generated deformations of $\C$ is a $2$-proximate formal $\E_{n+2}$-moduli problem, whose corresponding formal moduli problem, denoted by $\E_n\mathrm{CatDef}_{\C}^{\wedge}$, is equivalent to
$$\Map_{\Alg_{k}^{(n+2),\mathrm{aug}}}\Big(\mathscr{D}^{(n+2)}(-),k\oplus\mathrm{fib}\big(\mathrm{End}_{\Z_{\E_n}(\C)}(1)\rightarrow \mathrm{End}_{\C}(1)\big)\Big)$$
where $\Z_{\E_n}(\C)$ is the $\E_n$-center of $\C$. If $\C$ is rigid and tamely compactly generated, then $\E_n\mathrm{CatDef}_{\C}^c$ will be a $1$-proximate formal $\E_{n+2}$-moduli problem. Moreover, if there is a collection of unobstructible objects generating $\C$ under small colimits, then $\E_n\mathrm{CatDef}_{\C}^c$ will itself be a formal $\E_{n+2}$-moduli problem.
\end{theoremA*}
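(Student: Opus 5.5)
The proof follows Lurie's strategy for plain categories in \cite[Sec. 16.6]{SAG}, enhanced by the $\E_n$-monoidal structure via Dunn additivity. The first step is to identify $\E_n$-monoidal compactly generated deformations with a category of modules. For an Artin $\E_{n+2}$-algebra $B$, we have $\LMod_B$ as an $\E_{n+1}$-monoidal category. A deformation $\C_B$ is then an $\E_n$-algebra object in $\LMod_B$-linear presentable categories $\mathbf{Pr}^L_{B}$, with an equivalence $\C_B\otimes_{\LMod_B}\mathbf{Mod}_k\simeq\C$.

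Using compact generation, I would reduce to the compact objects and so view the problem as a deformation of a small $\E_n$-monoidal stable category. Next I would construct the comparison map to the functor
$$B\mapsto\Map_{\Alg^{(n+2),\aug}_k}\big(\mathscr{D}^{(n+2)}(B),k\oplus\mathrm{fib}(\End_{\Z_{\E_n}(\C)}(1)\rightarrow\End_{\C}(1))\big).$$
To do this, I would recast $\E_n$-monoidal categories as $\E_n$-algebras in $\mathbf{Pr}^L_k$. The deformation functor then becomes the deformation problem of the $\E_n$-algebra $\C$ in a symmetric monoidal $(\infty,2)$-ish setting. Applying the categorified version of Theorem~\ref{2.6} (deformations of $\E_n$-algebras are controlled by the $\E_{n+1}$-algebra $\mathrm{fib}(\Z_{\E_n}(A)\to A)$) one categorical level up should shift the degree by one more. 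The $\E_n$-center $\Z_{\E_n}(\C)$ is an $\E_{n+1}$-monoidal category, so $\End_{\Z_{\E_n}(\C)}(1)$ is an $\E_{n+2}$-algebra. The fiber over $\End_\C(1)$ removes the "trivial" deformations of the unit, exactly as the fiber over $A$ does in the algebra case.

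Having the map, the proximate-moduli claims reduce to two checks. The first is the behaviour of $\E_n\mathrm{CatDef}^c_\C$ on pullbacks of Artin algebras along small extensions $B'\to B\leftarrow k$. The second is the computation of the tangent complex. For the pullback condition I would use Lurie's result that $\LinCat$ of a pullback $B'\times_B B''$ maps fully faithfully into the fiber product of categories, with a $2$-truncated failure of essential surjectivity \cite[Sec. 16.6]{SAG}. That result should carry over verbatim, since the $\E_n$-monoidal structure is algebraic data over these categories, and taking $\E_n$-algebras preserves limits and truncatedness of fibers. This yields $2$-proximity. For the tangent complex, evaluating on the square-zero $k\oplus k[m]$, the deformations should be computed by $\E_n$-monoidal $\LMod_{k\oplus k[m]}$-linear structures. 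This amounts to central (in the $\E_n$ sense) endotransformations of the identity, i.e.\ maps into $\End_{\Z_{\E_n}(\C)}(1)$ modulo those coming from $\End_\C(1)$. Matching this with the Koszul-dual description identifies the formal moduli problem $\E_n\mathrm{CatDef}^\wedge_\C$.

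For the refined statements, I would reuse Lurie's arguments for the underlying categories. The rigid case uses rigidity to make the underlying-category deformation problem control the monoidal one: duals of compact objects lift automatically, so gluing compact generators along pullbacks becomes essentially surjective up to $1$-truncation. Tame compact generation gives $1$-proximity just as in \cite[Sec. 16.6]{SAG}. Unobstructible generators make the comparison map an equivalence, because every object lifts, which yields a genuine formal $\E_{n+2}$-moduli problem.

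The main obstacle is the tangent complex identification. Making precise that the $\E_n$-monoidal deformation is controlled by $\End_{\Z_{\E_n}(\C)}(1)$, rather than merely by $\End_{\End(\C)}(1)$, requires carefully relating $\E_n$-module categories and the $\E_n$-center via \cite[Sec. 5.3]{HA}. I would first attempt it by reducing to Theorem~\ref{2.6} applied to $\End(\mathrm{id})$-type algebras using Corollary~\ref{algebra}.
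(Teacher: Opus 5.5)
Your treatment of the proximity claims is essentially the paper's. Full faithfulness of the induced functors on $\E_n$-monoidal functor categories over a pullback makes the map on deformation groupoids $0$-truncated, and Lurie's arguments \cite[Sec. 16.6]{SAG} then handle the tamely compactly generated and unobstructible cases. Two descriptions are off, however. First, $2$-proximity is exactly this $0$-truncatedness, not a ``$2$-truncated failure of essential surjectivity''; $1$-proximity is full faithfulness, with the unit $\mathrm{id}\to G\circ F$ an equivalence. Second, rigidity is not used to lift duals. It guarantees that the colimit-closure of $\D_{A_1}^c\times_{\D_{A_{01}}^c}\D_{A_0}^c$ is closed under the tensor product, so the right adjoint $G$ lands in $\E_n$-monoidal categories, and that deformations of $\C$ stay rigid.

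The genuine gap is the identification of $\E_n\mathrm{CatDef}_\C^\wedge$. You never construct a map to the formal moduli problem of $k\oplus\mathrm{fib}(\End_{\Z_{\E_n}(\C)}(1)\to\End_\C(1))$, and evaluating on $k\oplus k[m]$ cannot replace one. A formal moduli problem is determined by its tangent complex \emph{as a non-unital $\E_{n+2}$-algebra}, and the tangent-complex criterion applies only to a given map. Your fallbacks do not close this:
\begin{itemize}
\item There is no independent ``categorified Theorem~\ref{2.6}''. In the paper, the control algebra in Theorem~\ref{2.6} is deduced \emph{from} Theorem~\ref{mainA} together with Corollary~\ref{algebra}.
\item Corollary~\ref{algebra} only concerns $\C=\LMod_A$, whose unit is a compact generator; this fails, e.g., for $\Rep(G)$. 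It is also proved by the same comparison as Theorem~\ref{mainA}, so the route is both circular and too narrow.
\end{itemize}

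The missing idea is a Morita-theoretic comparison one categorical level up. There are two maps:
\begin{itemize}
\item Send a deformation $\C_A$ to $\LMod^n_{\C_A}$, which is a deformation of $\LMod^n_\C\in(n+1)\Pr^L_k$.
\item Send a deformation $M$ of the unit object $\LMod^{n-1}_\C$, inside the trivial deformation $\LMod^{n+1}_A\otimes_{\Mod^{n+1}_k}\LMod^n_\C$, to its $n$-fold endomorphism object $\End^n(M)$. This is an $\E_n$-monoidal deformation of $\C$.
\end{itemize}
Iterating \cite[Cor. 5.1.2.6]{HA} and using Morita theory yields a fiber sequence $\mathrm{ObjDef}^{(n+2)}_{\LMod^{n-1}_\C}\to\E_n\mathrm{CatDef}_\C\to\mathrm{CatDef}_{\LMod^n_\C}$. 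Universal descent along $A\to k$ is needed here, to show that every such $M$ and every iterated identity is a compact generator. This sequence maps, by equivalences on the outer terms, to the simultaneous-deformation sequence of the pair $(\LMod^n_\C,\LMod^{n-1}_\C)$. Hence $\E_n\mathrm{CatDef}_\C^\wedge\simeq\mathrm{SimDef}^\wedge_{(\LMod^n_\C,\LMod^{n-1}_\C)}$.

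By the deformation theory of objects and of categories in presentable $(\infty,n)$-categories, the latter is controlled by $\mathrm{fib}(\End^{n+2}_{(n+1)\Pr^L_k}(\LMod^n_\C)\to\End^{n+1}_{\LMod^n_\C}(\LMod^{n-1}_\C))$. Proposition~\ref{center} identifies this with $\mathrm{fib}(\End_{\Z_{\E_n}(\C)}(1)\to\End_\C(1))$. So the fiber does not come from ``removing trivial deformations of the unit''. It is the fiber sequence of tangent complexes $\End_\C(1)[-1]\to\T_{\E_n\mathrm{CatDef}^\wedge_\C}[-n-2]\to\End_{\Z_{\E_n}(\C)}(1)$, which arises from deforming $\LMod^n_\C$ together with its unit object.
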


\paragraph{$n$-Shifted Deformation Quantization}
We can use the theorem above to study deformation quantization in derived algebraic geometry and explain ideas in \cite{To14} in details. For some derived stack $X$, e.g. $[Y/G]$ where $Y$ is a derived affine scheme and $G$ is a reductive algebraic group over $k$, the \textit{$n$-shifted deformation quantization} $\QCoh_{\hbar}(X)$ of $\QCoh(X)$ exists, which is an $\E_n$-monoidal formal deformation of $\QCoh(X)$. 

The \textit{higher formality theorem} proved in \cite[Cor. 5.4]{To13} states that here are also two equivalent dgLas:
$$\Gamma(X,Sym_{\mathcal{O}_X}(\T_X[-n-1]))[n+1]\simeq HH^{*}_{\E_{n+1}}(X)[n+1]=\End_{\Z_{\E_n}(\QCoh(X))}(1)[n+1]$$
A solution $\beta$ of the Maurer-Cartan equation on the left hand side is called an \textit{$n$-shifted Poisson structure on $X$}. So that with the theorem above, we know the $\E_n$-monoidal formal moduli problem of $\QCoh(X)$ is controlled by the non-unital $\E_{n+2}$-algebra
$$\mathrm{fib}\big(\Gamma(X,Sym_{\mathcal{O}_X}(\T_X[-n-1]))\rightarrow \Gamma(X,\mathcal{O}_X)\big)$$
Then for $X=[Y/G]$ where $Y$ is a derived affine scheme and $G$ is a reductive group, an $n$-shifted Poisson structure $\beta$ will lie in this fiber and induce an $\E_n$-monoidal formal deformation of $\QCoh(X)$, which may be a \textit{curved deformation} different from \cite{CPTVV}, although the insight of \cite[Thm. 4.27]{BKP} suggests that such a curved phenomenon may not occur in our setting here.

A formal localization theorem is proved in \cite{CPTVV} that for a derived Artin stack $X$, $\mathbf{Perf}(X)$ is equivalent to the perfect $\mathcal{B}_X(\infty)$-modules. And an $n$-shifted Poisson structure is equivalent to a $\mathbb{P}_{n+1}$-algebra structure on $\mathcal{B}_X(\infty)$. A choice of the equivalence $\E_{n+1}\simeq \mathbb{P}_{n+1}$ will induce a formal deformation $\mathcal{B}_{X,\hbar}(\infty)$ whose module category can be regarded as an $\E_{n}$-monoidal formal deformation of $\mathbf{Perf}(X)$.
\\

Statements above can help us study a specific example $X=BG$ and prove a uniqueness theorem for formal deformations of $\Rep(G)=\QCoh(BG)$ where $G$ is a reductive group over the field $k$ of characteristic $0$ with a simple Lie algebra $\g=T_eG$ at the unit $e:\Spec\ k\rightarrow G$. In \cite{Dri90} and \cite{Dri91}, Drinfeld has already studied the monoidal and braided monoidal deformations of the \textit{abelian category} $\Rep(\g)^{\heartsuit}$ using (quasi-triangular) quasi-Hopf algebras, instead of working with monoidal categories directly. The uniqueness theorem Drinfeld proves is that non-trivial formal deformations of the enveloping algebra $U(\g)$ are unique up to isomorphisms, twistings and change of parameter, which implies non-trivial (braided) monoidal formal deformations of $\Rep(\g)^{\heartsuit}$ should also be unique. Such formal deformation is realized by $\Rep(U_\hbar(\g))^\heartsuit$ where $U_\hbar(\g)$ is the universal enveloping algebra. In particular, the Drinfeld category will be equivalent to $\Rep(U_\hbar(\g))^\heartsuit$ as braided monoidal categories \cite[Thm. 1.4.6]{BaK01}.

In this paper, we deal with the derived case which means our $\Rep(G)$ here is a \textit{stable $\infty$-category} not an abelian category, and prove a uniqueness theorem for its formal deformations. In Prop. \ref{RepG}, we compute the path components set of the space of $\E_2$-monoidal deformations of $\Rep(G)$ over $k[[\hbar]]$ where $\g$ is supposed to be simple, and we also show if two non-trivial $\E_2$-monoidal formal deformations both induce non-trivial first order deformations, then they are equivalent up to scaling i.e. $\mathrm{Aut}(k[[\hbar]])$. This uniqueness result can hold for other formal $\E_n$-moduli problems with $n\geq 2$, whose corresponding non-unital $\E_n$-algebra $R$ satisfies that $H^n(R)\cong k$ is one dimensional and $H^{n-1}(R)=H^{n+1}(R)=0$ (see Theorem \ref{unique}). These statements are summarized as follows.

\begin{theoremB*}[Thm. \ref{BG}, \ref{unique}]
Let $G$ be a reductive group over the field $k$ of characteristic $0$. Then the naive $\E_n$-monoidal deformation problem $\E_n\mathrm{CatDef}_{\Rep(G)}^c$ consisting of compactly generated deformations of $\Rep(G)$ is already a formal $\E_{n+2}$-moduli problem, which is equivalent to 
$$\Map_{\Alg_{k}^{(n+2),\mathrm{aug}}}\Big(\mathscr{D}^{(n+2)}(-),k\oplus\mathrm{fib}\big(Sym_{k}(\g[-n])^G\rightarrow k\big)\Big)$$
where $\g$ is the corresponding Lie algebra of $G$. If $\g$ is simple, then $\pi_0\E_2\mathrm{CatDef}_{\Rep(G)}^c(k[[\hbar]])\simeq \prod_{\mathbb{Z}}k$ is equipped with a natural action from $\mathrm{Aut}(k[[\hbar]])$ such that formal deformations which induce non-trivial first order deformations are unique up to this action.
\end{theoremB*}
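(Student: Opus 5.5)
The plan has two parts, matching the two halves of Theorem B: the identification of $\E_n\mathrm{CatDef}^c_{\Rep(G)}$ as a formal moduli problem together with its controlling algebra, and the uniqueness statement for $n=2$.

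\emph{First half.} I would apply Theorem A to $\C=\Rep(G)=\QCoh(BG)$, so I need three hypotheses.
\begin{enumerate}
\item $\Rep(G)$ is rigid monoidal. This holds since $G$ is reductive in characteristic $0$: the compact objects are the finite-dimensional representations, which are dualizable, and the unit is compact.
\item It is tamely compactly generated. It is compactly generated by the irreducible representations $V_\lambda$, and $\Rep(G)$ is semisimple, so $\mathrm{Ext}^{i}(V_\lambda,V_\mu)=0$ for $i\neq 0$.
\item The generators are unobstructible. The obstruction to deforming $V_\lambda$ lives in $\mathrm{Ext}^2(V_\lambda,V_\lambda)=0$, so each irreducible is unobstructible.
\end{enumerate}
Theorem A then says that $\E_n\mathrm{CatDef}^c_{\Rep(G)}$ is itself a formal $\E_{n+2}$-moduli problem, controlled by $\mathrm{fib}(\End_{\Z_{\E_n}(\C)}(1)\to\End_\C(1))$.

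Next I compute this algebra. Here $\End_\C(1)=\mathrm{RHom}_G(k,k)=k$ by reductivity. For the center, I use the higher formality identification from the introduction:
$$\End_{\Z_{\E_n}(\QCoh(X))}(1)\simeq \Gamma\big(X,Sym_{\mathcal{O}_X}(\T_X[-n-1])\big).$$
For $X=BG$, the tangent complex is $\T_{BG}=\g[1]$ with the adjoint action, so the right-hand side becomes $Sym_k(\g[-n])$ with $G$-invariants taken derivedly. Derived invariants equal ordinary invariants because $G$ is reductive. Alternatively, I could compute $\Z_{\E_n}(\Rep(G))\simeq \QCoh(\Map(S^{n-1},BG))$ directly and take $\End(1)$. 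Either way the fiber is $\mathrm{fib}(Sym_k(\g[-n])^G\to k)$, which is the first claim.

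\emph{Second half ($n=2$).} The controlling algebra is $R=\mathrm{fib}(Sym(\g[-2])^G\to k)$, and with this sign convention it is concentrated in degrees $2m$, $m\ge1$. Its piece in degree $2m$ is $Sym^m(\g)^G$, the invariant polynomials of degree $m$ (Chevalley). Because $\g$ is simple:
\begin{itemize}
\item $Sym^1(\g)^G=0$;
\item $Sym^2(\g)^G\cong k$, spanned by the Killing form;
\item $H^{3}(R)=0=H^{1}(R)$.
\end{itemize}
One subtlety is that the shift between $R$ and the dgLa should be checked against the grading used in Theorem \ref{unique}, which requires $H^{n}(R)\cong k$ with the neighbouring groups vanishing, here for $n+2=4$. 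So I would verify that the Killing form sits in exactly the degree that theorem demands.

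To compute $\pi_0$ of the value on $k[[\hbar]]$, I write $k[[\hbar]]=\lim_m k[\hbar]/\hbar^m$ and pass to the limit (Mittag-Leffler on $\pi_1$). I then use that $\mathscr{D}^{(4)}$ of a power series ring in one even variable is free on one generator in the appropriate degree. This should make $\pi_0\Map(\mathscr{D}(k[[\hbar]]),k\oplus R)$ a product of homotopy groups of $R$ in one fixed degree class, with one factor per power of $\hbar$. It is formal, because $R$ is concentrated in even degrees with trivial structure up to homotopy, being a polynomial algebra in even generators. This yields the answer $\prod_{\mathbb Z}k$ stated in the theorem, although I would double-check the indexing.

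Finally, the uniqueness up to $\mathrm{Aut}(k[[\hbar]])$ follows by invoking Theorem \ref{unique}. Its hypotheses are exactly the degree computations above.

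The main obstacle is the identification of $\End_{\Z_{\E_n}(\Rep(G))}(1)$ with $Sym(\g[-n])^G$ with the correct shift, because every subsequent degree count depends on it. I would settle it by computing Hochschild-type cohomology of $BG$ via the loop-space description of the center.
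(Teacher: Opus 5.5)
Your first half matches the paper's proof of Theorem \ref{BG}. You verify rigidity, tame compact generation and unobstructible generators via semisimplicity, apply Theorem \ref{mainA}, and identify $\End_{\mathcal{Z}_{\E_n}(\Rep(G))}(1)\simeq Sym_k(\g[-n])^G$ by higher formality, an input that requires $n\ge 1$. Your final appeal to Theorem \ref{unique} is also what the paper does. There are two small corrections in these parts:
\begin{enumerate}
\item The loop-space alternative should use $\Mapp(S^n,BG)$, since $\mathcal{Z}_{\E_n}(\QCoh(X))\simeq\QCoh(\mathcal{L}^{(n)}X)$. With $S^{n-1}$ you get the $\E_{n-1}$-center.
\item For the $\E_4$-problem, the hypotheses of Theorem \ref{unique} are $H^4(R)\cong k$ and $H^3(R)=H^5(R)=0$, not $H^1(R)=H^3(R)=0$. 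They follow at once from your own observation that $R$ lives in even degrees with $H^4(R)=(Sym^2\g)^G$ spanned by the Killing form.
\end{enumerate}

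\textbf{The gap} is the computation $\pi_0\E_2\mathrm{CatDef}^c_{\Rep(G)}(k[[\hbar]])\simeq\prod k$. You argue that $\mathscr{D}^{(4)}(k[[\hbar]])$ is free on one generator, so $\pi_0$ is a product with one factor per power of $\hbar$. That does not work.
\begin{itemize}
\item If \emph{free} means free as an $\E_4$-algebra, then $\Map_{\Alg_k^{(4),\aug}}(\mathrm{Free}_{\E_4}(k[-4]),k\oplus R)\simeq\Map_{\Mod_k}(k[-4],R)$ has $\pi_0=H^4(R)\cong k$. That is a single copy, and it is $\mathscr{D}^{(4)}(k[\hbar]/\hbar^2)$, the first-order answer.
\item If \emph{free} means free commutative, no free--forgetful adjunction computes $\E_4$-maps out of it.
\item The formality remark does not help. $R$ is the augmentation ideal of a polynomial algebra, so its product is not trivial, and no multiplicative information is needed anyway.
\end{itemize}

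\textbf{What is missing} is the finite-stage induction of Prop. \ref{RepG}.
\begin{enumerate}
\item The pullback $k[\hbar]/\hbar^{m+1}\simeq k[\hbar]/\hbar^m\times_{k\oplus k[1]}k$ gives a fiber sequence $X(k[\hbar]/\hbar^{m+1})\to X(k[\hbar]/\hbar^m)\to X(k\oplus k[1])\simeq\Map_{\Mod_k}(k[-5],R)$.
\item The base has $\pi_0=H^5(R)=0$, $\pi_1=H^4(R)\cong k$ and $\pi_2=H^3(R)=0$.
\item Hence every $m$-th order deformation lifts, and the lifts form a $k$-torsor.
\item Also $\pi_1X(k[\hbar]/\hbar^m)=0$ for all $m$, starting from $\pi_1X(k[\hbar]/\hbar^2)=H^3(R)=0$.
\item So $\pi_0X(k[\hbar]/\hbar^{m+1})$ is in bijection with $k^m$. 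Only then does your Milnor/Mittag-Leffler step, with $\lim^1\pi_1=0$, give $\prod_{m\ge1}k$.
\end{enumerate}
Only $H^3$, $H^4$ and $H^5$ of $R$ enter this argument.

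Alternatively, since you cite Theorem \ref{unique} anyway, its conclusion already yields the count: $\pi_0X(k[[\hbar]])\cong\End(k[[\hbar]])=\{\hbar\mapsto a_1\hbar+a_2\hbar^2+\cdots\}$, which is the countable product.
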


This theoram also means that a non-trivial $2$-shifted Poisson structure $\beta\in (Sym^2 \g)^G$ on $BG$ can induce a non-trivial $\E_2$-monoidal formal deformation of $\Rep(G)$, which is \textit{uncurved} and unique. It also applies to the $\E_1$-monoidal deformation theory of $\Rep(G)$.

\paragraph{Conventions}
\begin{itemize}
    \item We always suppose our field $k$ is of characteristic $0$ and $\Mod_k$ is the $\infty$-category of $k$-modules.
    \item $s\Set$ is the $\infty$-category of (small) simplicial sets, and $\widehat{s\mathbf{Set}}$ denotes the $\infty$-category of not necessarily small simplicial sets.
    \item $\Cat$ is the $\infty$-category of not necessarily small $\infty$-categories.
\end{itemize}

\paragraph{Acknowledgments}
This paper is part of the author's master thesis at the University of Bonn. I would like to thank Pavel Safronov for introducing me the deformation quantization of $BG$, and for his patience to answer many of my questions in email. I'm also grateful to Germán Stefanich for supervising me in Bonn, from whom I learned higher algebra stuff and formal moduli problems. I'm benefited by many insightful discussions with him.

\section{Basics of Formal Moduli Problems}
Let $\Alg_{k}^{(n)}:=\mathrm{Alg}_{\E_n}(\Mod_k)$ be the $\infty$-category of $\E_n$-algebras over $k$. An \textit{augmented $\E_n$-algebra over $k$} is a morphism $A\rightarrow k$ in $\Alg_{k}^{(n)}$. So we define $\Alg_{k}^{(n),\aug}:=\Alg_{k}^{(n)}/k$ and $\CAlg_{k}^{\aug}:=\CAlg_k/k$, where $\CAlg_{k}=\mathrm{Alg}_{\E_\infty}(\Mod_k)$ consists of commutative algebras over $k$. For an augmented algebra $A$, $\m_A$ is the fiber of $A\rightarrow k$ in $\Mod_k$, which has a non-unital algebra structure, and then $A$ is actually equivalent to $k\oplus \m_A$. In fact, in \cite[Prop. 5.4.4.10]{HA} Lurie shows non-unital algebras are equivalent to augmented algebras in this way.

\begin{definition}
    \normalfont
An $\E_n$-algebra $A$ over $k$ is \textit{Artin} if it satisfies the following properties:
\begin{itemize}
    \item[(1).] $A$ is connective i.e. $H^n(A)=0$ for $n>0$.
    \item[(2).] $A$ is truncated i.e. $H^n(A)=0$ for $n\ll0$. 
    \item[(3).] Every $H^n(A)$ is a finite dimensional vector space over $k$.
    \item[(4).] If $\mathfrak{m}$ is the radical of $H^0(A)$, then the canonical map $k\rightarrow H^0(A)/\mathfrak{m}$ is an isomorphism.
\end{itemize}
\end{definition}

The subcategory of Artin $\E_n$-algebras over $k$ is denoted by $\Alg_{k}^{(n),\Art}$. Because the mapping space $\Map_{\Alg^{(n)}_k}(A,k)\simeq *$ is contractible for an Artin $\E_n$-algebra $A$, there is a fully faithful functor $\Alg_{k}^{(n),\Art}\rightarrow \Alg^{(n),\aug}_{k}$.

\begin{definition}
    \normalfont
Let $X:\Alg_{k}^{(n),\Art}\rightarrow s\Set$ be a functor. $X$ is a \textit{formal $\E_n$-moduli problem} if it satisfies the following properties:
\begin{itemize}
    \item[(1).] $X(k)\simeq *$ is contractible.
    \item[(2).] For every pullback diagram 
    \[\begin{tikzcd}
	A & {A_0} \\
	{A_{1}} & {A_{01}}
	\arrow[from=1-1, to=1-2]
	\arrow[from=1-1, to=2-1]
	\arrow["\lrcorner"{anchor=center, pos=0.125}, draw=none, from=1-1, to=2-2]
	\arrow[from=1-2, to=2-2]
	\arrow[from=2-1, to=2-2]
\end{tikzcd}\]
in $\Alg_{k}^{(n),\Art}$ such that underlying maps $H^0(A_1)\rightarrow H^0(A_{01})\leftarrow H^0(A_0)$ are surjective, the diagram 
\[\begin{tikzcd}
	X(A) & {X(A_0)} \\
	{X(A_{1})} & {X(A_{01})}
	\arrow[from=1-1, to=1-2]
	\arrow[from=1-1, to=2-1]
	\arrow["\lrcorner"{anchor=center, pos=0.125}, draw=none, from=1-1, to=2-2]
	\arrow[from=1-2, to=2-2]
	\arrow[from=2-1, to=2-2]
\end{tikzcd}\]
is also a pullback.
\end{itemize}
\end{definition}

The full subcategory of $\fun(\Alg_{k}^{(n),\Art},s\Set)$ consisting of formal $\E_n$-moduli problems is denoted by $\Moduli_{k}^{(n)}$ and $L:\fun(\Alg_{k}^{(n),\Art},s\Set)\rightarrow \Moduli_{k}^{(n)}$ is the corresponding localization functor.

In \cite[Sec. 15.2]{SAG}, there is a \textit{Koszul duality} functor $\Dn:(\Alg_{k}^{(n),\aug})^{op}\rightarrow \Alg_{k}^{(n),\aug}$ defined by the universal property that for an augmented $\E_n$-algebra $A\rightarrow k$, $\Dn(A)$ is a universal $\E_n$-algebra such that $A\otimes_k\Dn(A)$ has the augmentation extending $A$.

\begin{example}
    \normalfont
From \cite[Prop. 15.3.2.1]{SAG}, we know $k\oplus k[m]\simeq \Dn(\mathrm{Free}_{\E_n}^{\aug}(k[-n-m]))$ where $\mathrm{Free}_{\E_n}^{\aug}(k[-n-m])$ is the free $\E_n$-algebra $\mathrm{Free}_{\E_n}(k[-n-m])$ with only one variable at degree $n+m$, whose augmentation sends the variable to $0$ in $k$. Because $\mathrm{Free}_{\E_n}(k[-n-m])$ is $n$-coconnective and locally finite, from \cite[Thm. 15.2.2.1]{SAG} we have following equivalences
$$\Dn(k\oplus k[m])\simeq \Dn\circ\Dn(\mathrm{Free}_{\E_n}(k[-n-m]))\simeq \mathrm{Free}_{\E_n}(k[-n-m])$$
In particular, $\Dn(k[x]/x^2)\simeq \mathrm{Free}_{\E_n}(k[-n])$.
\end{example}

By \cite[Thm. 15.0.0.9]{SAG}, the philosophy that formal moduli problems are equivalent to \textit{differential graded Lie algebras} (dgLas), actually means there is an equivalence of $\infty$-categories $\Psi:\Alg_{k}^{(n),\aug}\rightarrow \Moduli_{k}^{(n)}$ which sends an augmented $\E_n$-algebra $A$ to the functor
$$X:B\mapsto\Map_{\Alg_{k}^{(n),\aug}}(\Dn(B),A)$$

From the example above, we get 
$$X(k\oplus k[m])\simeq \Map_{\Alg_{k}^{(n),\aug}}(\mathrm{Free}_{\E_n}(k[-n-m]),A)\simeq \m_A[n+m]$$
In particular, $X(k[x]/x^2)\simeq \m_A[n]$ is identified with $\T_{\Psi(A)}$ the \textit{tangent complex} of $\Psi_A$.\footnote{Here we should be careful. Strictly speaking, $X(k[x]/x^2)$ is a simplicial set but $\m_A[n]$ is a $k$-module or spectra. In fact, $X(k[x]/x^2)$ is equivalent to $DK(\tau^{\leq 0}(\m_A[n]))$ where $DK$ is the Dold-Kan correspondence. The tangent complex $\T_{\Psi(A)}$ is a spectra such that $\T_{\Psi(A)}(m)=X(k\oplus k[m])$, and then it is equivalent to $\m_A[n]$. The reason not to distinguish with $\m_A[n]$ and $DK(\tau^{\leq 0}(\m_A[n]))$ here is that by writing $\m_A[n]$, it is much more clear to talk about homotopy groups or cohomology groups, and their long exact sequences later.} Because $\m_A$ is the augmentation ideal of an $\E_n$-algebra $A$, $\m_A$ and equivalently $\T_{\Psi(A)}[-n]$ admit a non-unital $\E_n$-algebra structure. It is proved in \cite[Prop. 12.2.2.6]{SAG} that two formal moduli problems are equivalent if they have equivalent tangent complexes.

\paragraph{FMP vs DGLA}
In the case of $n=\infty$, the Koszul duality functor will be $\mathscr{D}:(\CAlg_{k}^{\aug})^{op}\rightarrow \mathbf{dgLa}_k$ whose left adjoint functor is the \textit{cohomological Chevalley-Eilenberg complex} \cite[Cons. 13.2.5.1]{SAG}.

It is stated in \cite[Thm. 13.0.0.2]{SAG} that there is an equivalence $\Psi:\mathbf{dgLa}_k\rightarrow \mathbf{Moduli}_k$ sending a dgLa $\g$ to the functor 
$$X=\Psi(\g):B\mapsto\Map_{\mathbf{dgLa}_k}(\mathscr{D}(B),\g)$$
and the corresponding tangent complex $\T_{\Psi(\g)}$ is identified with $\g[1]$. So this means for a formal $\E_n$-moduli problem $\Psi(A)$, $\m_{A}[n-1]$ has a dgLa structure and we should have the following commutative diagram similar to \cite[Thm. 14.3.0.1]{SAG}
\[\begin{tikzcd}
	{\Alg_{k}^{(n),\aug}} & {\mathbf{dgLa}_k} \\
	{\Moduli_{k}^{(n)}} & {\Moduli_k}
	\arrow[from=1-1, to=1-2]
	\arrow[from=1-1, to=2-1]
	\arrow[from=1-2, to=2-2]
	\arrow[from=2-1, to=2-2]
\end{tikzcd}\]
where the bottom map is defined by the composition of $\CAlg_{k}\rightarrow \Alg_{k}^{(n)}$.

In a summary, we get the following theorem.

\begin{theorem}[Lurie]
Let the deformation problem $\mathrm{Def}: \mathbf{Alg}_{k}^{(n),\mathrm{Art}}\rightarrow s\mathbf{Set}$ be a formal $\E_n$-moduli problem. Then 
$$\mathrm{Def}(-)\simeq \Map_{\Alg_{k}^{(n),\mathrm{aug}}}\big(\mathscr{D}^{(n)}(-),k\oplus R\big)$$
for some non-unital $\mathbb{E}_n$-algebra $R$, where $\mathscr{D}^{(n)}:(\Alg_{k}^{(n),\mathrm{aug}})^{op}\rightarrow \Alg_{k}^{(n),\mathrm{aug}}$ is the $\E_n$-Koszul duality functor. When restricted to commutative Artin algebras i.e. $\mathrm{Def}:\mathbf{CAlg}_{k}^{\mathrm{Art}}\rightarrow s\Set$, $\mathrm{Def}(-)$ will be equivalent to $\Map_{\mathbf{dgLa}_k}\big(\mathscr{D}(-),\g\big)$ where $\g\simeq R[n-1]$ and $\mathscr{D}:(\mathbf{CAlg}_{k}^{\mathrm{aug}})^{op}\rightarrow \mathbf{dgLa}_k$ is the Koszul duality functor.
\end{theorem}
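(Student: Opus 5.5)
This is a summary theorem, so the plan is to assemble it from Lurie's results recalled above rather than prove anything new. The first claim is the essential surjectivity of the equivalence $\Psi:\Alg_{k}^{(n),\aug}\rightarrow \Moduli_{k}^{(n)}$ of \cite[Thm. 15.0.0.9]{SAG}. Given a formal $\E_n$-moduli problem $\mathrm{Def}$, I pick an augmented $\E_n$-algebra $A$ with $\Psi(A)\simeq \mathrm{Def}$. By \cite[Prop. 5.4.4.10]{HA}, augmented algebras correspond to non-unital algebras, so $A\simeq k\oplus R$ with $R=\m_A$ non-unital. Unwinding the definition of $\Psi$ then gives $\mathrm{Def}(B)\simeq \Map_{\Alg_{k}^{(n),\aug}}(\Dn(B),k\oplus R)$, naturally in $B\in \Alg_{k}^{(n),\Art}$. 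As a consistency check, the computation $X(k[x]/x^2)\simeq \m_A[n]$ shows that $R\simeq \T_{\mathrm{Def}}[-n]$. Since $\Psi$ is an equivalence and two formal moduli problems with equivalent tangent complexes are equivalent \cite[Prop. 12.2.2.6]{SAG}, this $R$ is determined up to equivalence.

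For the second claim, I restrict $\mathrm{Def}$ along the forgetful functor $\CAlg_{k}^{\Art}\rightarrow \Alg_{k}^{(n),\Art}$. This functor preserves Artin objects, and it preserves pullbacks because limits of algebras are computed on underlying modules. Hence the small-extension pullbacks in $\CAlg_{k}^{\Art}$ map to pullbacks of the required kind, and the restriction is a formal moduli problem in $\Moduli_k$. In other words, it is the image of $\mathrm{Def}$ under the bottom arrow of the commutative square above. By \cite[Thm. 13.0.0.2]{SAG} there is then a dgLa $\g$ with $\mathrm{Def}|_{\CAlg_k^{\Art}}\simeq \Map_{\mathbf{dgLa}_k}(\mathscr{D}(-),\g)$.

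It remains to identify $\g$. The restriction does not change values on the trivial square-zero extensions $k\oplus k[m]$, which are commutative. So the tangent complex of the restriction equals $\T_{\mathrm{Def}}\simeq R[n]$. Since $\T_{\Psi(\g)}\simeq \g[1]$, this gives $\g\simeq R[n-1]$ as $k$-modules.

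The main subtlety is upgrading this to the statement that the Lie structure on $\g$ is the one induced from the $\E_n$-structure on $R$. For that I would invoke the commutative square of \cite[Thm. 14.3.0.1]{SAG}, whose top arrow $\Alg_k^{(n),\aug}\rightarrow\mathbf{dgLa}_k$ sends $k\oplus R$ to a dgLa with underlying module $R[n-1]$. If one only needs $\g\simeq R[n-1]$ as an identification of underlying complexes, the tangent complex comparison already suffices.
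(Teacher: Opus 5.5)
Your proposal is correct and follows essentially the same route as the paper. The paper likewise does not prove anything new: it assembles the equivalence $\Psi$ of \cite[Thm. 15.0.0.9]{SAG}, the identification of augmented algebras with non-unital ones, the restriction to $\mathbf{CAlg}_k^{\mathrm{Art}}$ together with \cite[Thm. 13.0.0.2]{SAG}, and the tangent-complex comparison $\mathfrak{g}[1]\simeq R[n]$ via the square modeled on \cite[Thm. 14.3.0.1]{SAG}.

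One caveat, which the paper shares: your uniqueness aside overstates \cite[Prop. 12.2.2.6]{SAG}. That result says a \emph{map} of formal moduli problems inducing an equivalence on tangent complexes is an equivalence; abstractly equivalent tangent complexes do not suffice. You do not need this step anyway, since $\Psi$ being an equivalence already determines $k\oplus R$ up to equivalence.
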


When the deformation problem $\mathrm{Def}$ is defined on commutative Artin algebras, there is another equivalent approach to the theorem above in \cite{Pri10} using model categories and simplicial categories.

\begin{definition}
\normalfont
For any non-negative integer $n$, the \textit{algebra of polynomial differential forms} $\Omega_{n}^{\bullet}=\Omega^{\bullet}(\Delta^n)$ on the algebraic $n$-simplex is the commutative differential graded algebra (cdga) 
$$\Omega_{n}^{\bullet}:=k[t_{0},\cdots,t_n,dt_{0},\cdots,dt_{n}]/(\sum t_i-1,\sum dt_i)$$
where $t_i$'s are at degree $0$, $dt_i$'s are at degree $1$ and the product is the wedge product. 
\end{definition}

\begin{definition}
	\normalfont
For every dgLa $\g$, its corresponding deformation problem or formal moduli problem is the \textit{Maurer-Cartan functor}
$$\mathrm{MC}_\g: \mathbf{CAlg}_{k}^{\Art}\rightarrow s\mathbf{Set},\ \ (B,\m_B)\mapsto \{\mathrm{MC}(\mathfrak{g}\otimes_k\mathfrak{m}_B\otimes_k \Omega^\bullet(\Delta^n))\}_{n\geq 0}$$
\end{definition}

For a dgLa $\mathfrak{h}$, a solution $w\in \mathrm{MC}(\mathfrak{h})$ is an element in $\mathfrak{h}^1$ satisfying the \textit{Maurer-Cartan equation} $dw+\frac{1}{2}[w,w]=0$. There is a \textit{gauge group} $G(\mathfrak{h})=\mathrm{exp}(\mathfrak{h}^0)$ acting on $\mathrm{MC}(\mathfrak{h})$. The path components set $\pi_0 \mathrm{MC}_\g(B)$ can be identified with the quotient set $\mathrm{MC}(\mathfrak{g}\otimes_k\mathfrak{m}_B)/\sim$ by gauge action.

\begin{remark}
\normalfont
Because the Maurer-Cartan functor $\mathrm{MC}_\g$ is defined on $\infty$-categories, it means this concept should be well defined up to homotopy, which actually depends on the nilpotence of $\mathfrak{g}\otimes_k\mathfrak{m}_B$ i.e. the Artin property of $B$. $\mathrm{MC}_\g$ can also extend to \textit{pro-Artin algebras}. For a commutative Artin algebra $B$, the Maurer-Cartan approach gives another interpretation of $\Map_{\mathbf{dgLa}_k}\big(\mathscr{D}(B),\g\big)$.

\end{remark}

\subsection{Proximate Formal Moduli Problems}
Suppose $\C$ is an $\infty$-category with finite limits. For a morphism $f:X\rightarrow Y$ in $\C$, we have the diagonal morphism $\Delta_f:X\rightarrow X\times_{Y} X$. Repeating this construction, we obtain morphisms $\Delta^{n}_{f}:=\Delta_{\Delta^{n-1}_f}:X\rightarrow d^n(f)$. Note that $\Delta^{0}_f=f$ and $\Delta^{1}_f=\Delta_f$.

\begin{definition}
    \normalfont
A morphism $f:X\rightarrow Y$ in $\C$ is \textit{$n$-truncated} if $\Delta^{n+2}_{f}:X\rightarrow d^{n+2}(f)$ is an equivalence. An object $X$ in $\C$ is \textit{$n$-truncated} if the morphism $X\rightarrow *$ is $n$-truncated.
\end{definition}

It is clear that $(-2)$-truncated morphisms are just equivalences. In classical categories, $(-1)$-truncated morphisms are \textit{monomorphisms}, because for any two maps $u, v:Z\rightarrow X$ satisfying $f\circ u=f\circ v$, they factor through $X\times_Y X\simeq X$ which means $u=v$. So in $\infty$-categories, we also call $(-1)$-truncated morphisms \textit{monomorphisms}.

\begin{example}
\normalfont
Suppose $X$ is a Kan complex in $s\Set$. Then $X$ is an $n$-truncated simplicial set if and only if its homotopy groups $\pi_{k}(X,\mathrm{*})$ are trivial for all $k>n$. A morphism of Kan complexes is $n$-truncated if it is an isomorphism on homotopy groups for $k>n+1$ and an injection for $k=n+1$, which means its (homotopy) fiber is $n$-truncated.
\end{example}

\begin{definition}
    \normalfont
Let $X:\Alg_{k}^{(n),\Art}\rightarrow s\Set$ be a functor. $X$ is an \textit{$m$-proximate formal $\E_n$-moduli problem} for $m\geq 0$ if it satisfies the following properties:
\begin{itemize}
    \item[(1).] $X(k)\simeq *$ is contractible.
    \item[(2).] For every pullback diagram 
    \[\begin{tikzcd}
	A & {A_0} \\
	{A_{1}} & {A_{01}}
	\arrow[from=1-1, to=1-2]
	\arrow[from=1-1, to=2-1]
	\arrow["\lrcorner"{anchor=center, pos=0.125}, draw=none, from=1-1, to=2-2]
	\arrow[from=1-2, to=2-2]
	\arrow[from=2-1, to=2-2]
\end{tikzcd}\]
in $\Alg_{k}^{(n),\Art}$ such that underlying maps $H^0(A_1)\rightarrow H^0(A_{01})\leftarrow H^0(A_0)$ are surjective, the induced morphism
$$X(A)\rightarrow X(A_1)\times_{X(A_{01})}X(A_0)$$
is $(m-2)$-truncated.
\end{itemize}
\end{definition}

Clearly, $0$-proximate formal $\E_n$-moduli problems are just formal $\E_n$-moduli problems because $(-2)$-truncated morphisms are equivalences. There are some characterizations of $m$-proximate formal $\E_n$-moduli problems in \cite[Thm. 16.4.2.1]{SAG}.

\begin{theorem}
Let $X:\Alg_{k}^{(n),\mathrm{Art}}\rightarrow s\Set$ be a functor such that $X(k)\simeq *$. Then the following statements are equivalent.
\begin{itemize}
    \item[(1).] $X$ is an $m$-proximate formal $\E_n$-moduli problem.
    \item[(2).] There exists an $(m-2)$-truncated map $f:X\rightarrow Y$ where $Y$ is an $m$-proximate formal $\E_n$-moduli problem.
    \item[(3).] The unit map $X\rightarrow LX$ is $(m-2)$-truncated where $L:\fun(\Alg_{k}^{(n),\mathrm{Art}},s\Set)\rightarrow \Moduli_{k}^{(n)}$ is the localization functor. 
\end{itemize}
\end{theorem}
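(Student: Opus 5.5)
The plan is to prove the cycle of implications (1)$\Rightarrow$(3)$\Rightarrow$(2)$\Rightarrow$(1).

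\emph{(3)$\Rightarrow$(2).} This is immediate. A formal $\E_n$-moduli problem is $0$-proximate, and a $0$-proximate problem is $m$-proximate for every $m\ge 0$, since equivalences are $(m-2)$-truncated. So $f=(X\rightarrow LX)$ works.

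\emph{(2)$\Rightarrow$(1).} Fix a pullback square $A=A_0\times_{A_{01}}A_1$ of the allowed type. I would compare the two gluing maps in the commutative square
\[\begin{tikzcd}
	X(A) & {X(A_0)\times_{X(A_{01})}X(A_1)} \\
	Y(A) & {Y(A_0)\times_{Y(A_{01})}Y(A_1)}
	\arrow[from=1-1, to=1-2]
	\arrow[from=1-1, to=2-1]
	\arrow[from=1-2, to=2-2]
	\arrow[from=2-1, to=2-2]
\end{tikzcd}\]
The left vertical map is $(m-2)$-truncated by hypothesis. The right vertical map is a fiber product of $(m-2)$-truncated maps, so it is $(m-2)$-truncated, since truncated maps are stable under limits. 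The bottom map is $(m-2)$-truncated because $Y$ is $m$-proximate. I then want the top map to be $(m-2)$-truncated. Set $g$ = top, $p$ = right, $q$ = bottom, $r$ = left, so that $p\circ g=q\circ r$. Since $q\circ r$ is $(m-2)$-truncated and $p$ is $(m-2)$-truncated, the cancellation property of $n$-truncated maps (if $p\circ g$ and $p$ are $n$-truncated then $g$ is, cf.\ HTT 5.5.6) gives the claim. Contractibility of $X(k)$ is assumed.

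\emph{(1)$\Rightarrow$(3).} This is the main obstacle, because $L$ is only given abstractly as a localization. I would first obtain a concrete model. By Lurie's equivalence $\Psi$, $LX(B)\simeq \Map(\Dn(B),A)$, where $A$ is built from the tangent data of $X$. More usefully, Lurie constructs $L$ as a transfinite colimit of the functor
\[ T\colon X\mapsto \big(B\mapsto \colim\,\text{over ``small extension'' refinements}\big), \]
that is, by iteratively forcing the gluing condition along the elementary pullbacks $A=A'\times_{k\oplus k[j]}k$, since every Artin algebra is an iterated such square extension. The plan is to show by induction on the length of a tower of square-zero extensions $B=B_r\rightarrow\cdots\rightarrow B_0=k$ that $X(B)\rightarrow LX(B)$ is $(m-2)$-truncated. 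The base case $B=k$ is trivial. For the inductive step, write $B_{i+1}=B_i\times_{k\oplus k[j]}k$. Then $X(B_{i+1})$ maps $(m-2)$-truncatedly to $X(B_i)\times_{X(k\oplus k[j])}X(k)$ by (1), while $LX(B_{i+1})$ equals the corresponding fiber product for $LX$.

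The remaining difficulty is to show that $X(k\oplus k[j])\rightarrow LX(k\oplus k[j])$ is $(m-2)$-truncated. I would handle this by expressing $k\oplus k[j]$ itself through the loop squares $k\oplus k[j]\simeq k\times_{k\oplus k[j+1]}k$. For $m$-proximate $X$, this shows that the spectrum-like object $\{X(k\oplus k[j])\}_j$ agrees with its stabilization (the tangent complex of $LX$) up to $(m-2)$-truncated maps, which is exactly where the hypothesis (1) is used repeatedly. Combining the two inductions, via the fiber-product argument from (2)$\Rightarrow$(1), yields (3).
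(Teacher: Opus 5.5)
\textbf{Verdict: the proposal has a genuine gap in (1)$\Rightarrow$(3).} Your implications (3)$\Rightarrow$(2) and (2)$\Rightarrow$(1) are correct. The right vertical map is a limit in $\fun(\Delta^1,s\Set)$ of $(m-2)$-truncated maps, hence $(m-2)$-truncated, and the cancellation step is standard. The paper gives no argument of its own for this statement; it imports it from \cite[Thm. 16.4.2.1]{SAG}.

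Your induction along towers of elementary extensions works once you know that $X(k\oplus k[j])\rightarrow LX(k\oplus k[j])$ is $(m-2)$-truncated for every $j$. It cannot produce this base case itself: $k\oplus k[j]\simeq k\times_{k\oplus k[j+1]}k$ has length one, so applying (1) to it only moves the question to $k\oplus k[j+1]$. What (1) genuinely gives you is that $X(k\oplus k[j])\rightarrow\colim_i\Omega^iX(k\oplus k[j+i])$ is $(m-2)$-truncated. You then take for granted that this colimit is $LX(k\oplus k[j])$, i.e.\ that $\T_{LX}$ is the spectrification of the prespectrum $\{X(k\oplus k[j])\}_j$. That is not a formal fact, and it is false for general $X$ with $X(k)\simeq *$. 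Spectrification commutes with pointwise pushouts of functors. By contrast, $X\mapsto\T_{LX}$ corresponds under $\Psi$ to $A\mapsto\m_A[n]$ on augmented $\E_n$-algebras, which does not preserve pushouts. Concretely, take $X=*\sqcup_{h}*$ with $h=\Map(k\oplus k[1],-)$. Then $LX\simeq\Map(k\oplus k,-)$, and your identification would give $\m_{\mathrm{Free}_{\E_n}(k[-n])}\simeq\Sigma\,\m_{\mathrm{Free}_{\E_n}(k[-n-1])}$. This already fails in weight two: the $H_{n-1}(\mathrm{Conf}_2(\R^n))$-summand of the left side gives a class in a degree where the right side has none.

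For $m$-proximate $X$ the identification does hold, but only a posteriori, as a consequence of (3). Deriving it from (1) is essentially the whole content of the implication, so the proposal reduces (1)$\Rightarrow$(3) to an unproved claim of the same strength. What is missing is real control of the localization $L$. One option is an explicit model for $L$, such as the transfinite construction you mention but never define or use. You would then need to prove that each step, applied to an $m$-proximate functor, is an $(m-2)$-truncated map, and that the construction converges to $LX$. The other option is an independent argument that $m$-proximity forces $\T_{LX}$ to equal the stabilization of $\{X(k\oplus k[j])\}_j$. This is the substantive part of \cite[Thm. 16.4.2.1]{SAG}, which the paper relies on.
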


\subsection{Lurie's Tensor Product}
Suppose $\Cat$ is the $\infty$-category consisting of (not necessarily small) $\infty$-categories.

\begin{definition}
    \normalfont
Let $\mathcal{J}$ be a collection of simplicial sets. $\widehat{\mathbf{Cat}}_{\infty}(\mathcal{J})$ is the subcategory of $\Cat$ whose objects are $\infty$-categories admitting $J$-indexed colimits and morphisms are functors preserving $J$-indexed colimits for every $J\in \mathcal{J}$.
\end{definition}

We use $\K$ to denote the collection of all small simplicial sets. Then $\Catk$ is the $\infty$-category consisting of $\infty$-categories having all small colimits and functors preserving small colimits. Also note that $\Cat(\emptyset)=\Cat$ because it means there is no colimits taken. $\Cat$ has a natural symmetric monoidal structure where the product is just the usual Cartesian product, so that $\Cat\rightarrow N(\mathrm{Fin}_*)$ is a \textit{coCartesian fibration}.

\begin{prop}\footnote{\cite[Cor. 4.8.1.4]{HA}} The composition $\Catk\subseteq \Cat\rightarrow N(\mathrm{Fin}_*)$ is a coCartesian fibration and makes $\Catk$ be a symmetric monoidal $\infty$-category inheriting from $\Cat$ i.e. $\Catk_{\langle n \rangle}\simeq \Catk^{n}_{\langle 1 \rangle}$.
\end{prop}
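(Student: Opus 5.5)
We derive the statement from Lurie's general criterion for when a symmetric monoidal structure on a full subcategory is created by the ambient coCartesian fibration. View $\Cat$ with its Cartesian symmetric monoidal structure as a coCartesian fibration $p:\Cat^{\times}\rightarrow N(\mathrm{Fin}_*)$. Define $\Catk^{\otimes}\subseteq\Cat^{\times}$ as the subcategory whose objects over $\langle n\rangle$ are tuples $(\C_1,\dots,\C_n)$ with every $\C_i$ admitting small colimits. A morphism over $\alpha:\langle n\rangle\rightarrow\langle m\rangle$ is a family of functors $\prod_{\alpha(i)=j}\C_i\rightarrow\D_j$, each preserving small colimits separately in each variable.

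The key steps are as follows.

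\begin{itemize}
\item[(1).] The fibre over $\langle n\rangle$ is $\Catk^{n}$. This holds because over identity maps the morphisms are tuples of colimit-preserving functors. It gives the Segal condition $\Catk_{\langle n\rangle}\simeq\Catk_{\langle 1\rangle}^{n}$.

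\item[(2).] We must show that $p$ restricted to $\Catk^{\otimes}$ is coCartesian. Given $\alpha$ and a tuple $(\C_i)$, we need a multilinear-universal target for each $j$. That is, we need an $\infty$-category $\bigotimes_{\alpha(i)=j}\C_i$ with small colimits and a functor $\prod\C_i\rightarrow\bigotimes\C_i$, preserving colimits in each variable, through which every such multi-colimit-preserving functor factors uniquely. We build it as the localization of presheaves $\mathcal{P}(\prod\C_i)$ that inverts the maps exhibiting colimit preservation in each variable. This uses the standard construction $\mathcal{P}^{\K}_{\mathcal{R}}$ of a free cocompletion relative to a class of cocones.

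\item[(3).] We check that these edges are coCartesian, not merely locally coCartesian. This reduces to composing multilinear maps. A functor out of $\bigotimes_i\C_i$ that is colimit-preserving in each variable, composed with the universal maps, is again separately colimit-preserving. Hence universality is stable under composition, and by the criterion in \cite[Prop. 2.4.2.8]{HTT}-style arguments locally coCartesian plus compositionality gives coCartesian.
\end{itemize}

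The main obstacle is step (2), and specifically size. The categories in question are large, so $\mathcal{P}(\prod\C_i)$ lives in a larger universe. We must verify that the localization is well defined and lands in $\Catk$ in the appropriate universe. We handle this by working in $\widehat{\mathbf{Cat}}_\infty$ with an enlarged universe, and by invoking Lurie's general result \cite[Prop. 4.8.1.3]{HA} for collections $\mathcal{K}'\subseteq\mathcal{K}$. That result gives exactly the universal property needed, and the statement follows as its special case $\mathcal{K}'=\emptyset$. The compatibility "inheriting from $\Cat$" then holds by construction, since $\Catk^{\otimes}$ is defined as a subcategory of $\Cat^{\times}$ with the same forgetful fibration to $N(\mathrm{Fin}_*)$.
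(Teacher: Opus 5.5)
Your proposal is correct and agrees with the paper, which gives no argument of its own beyond citing \cite[Cor.~4.8.1.4]{HA}. You unpack the reasoning behind that citation: universal multilinear targets $\mathcal{P}^{\K}_{\mathcal{R}}(\prod_i\mathcal{C}_i)$ via the relative free cocompletion, locally coCartesian lifts upgraded to coCartesian ones, and finally the degenerate case of Lurie's result, so you rest on the same source.

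The one soft spot is step (3), where the fact you state is only the easy direction. What actually makes the composite of locally coCartesian edges locally coCartesian is the converse: every functor $\prod_i\mathcal{C}_i\to\mathcal{E}$ preserving small colimits separately in each variable factors essentially uniquely, through a functor of the same kind, over $\prod_j\bigotimes_{\alpha(i)=j}\mathcal{C}_i$. This follows from the currying equivalence $\funl(\mathcal{C}\otimes\mathcal{D},\mathcal{E})\simeq\funl(\mathcal{C},\funl(\mathcal{D},\mathcal{E}))$ displayed in the paper.
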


The corresponding tensor product is called \textit{Lurie's tensor product}. This coCartesian fibration has the following data.

\begin{itemize}
\item Objects: for a pointed finite set $(I,*)$, a collection of $\infty$-categories 
    $$i\in I \setminus \{*\}\mapsto \C_i\in\Cat$$ 
    where each $\C_i$ has small colimits.
\item  Morphisms: for every map $\phi:(I,*)\rightarrow (J,*)$, a collection of functors 
    $$j\in J\setminus\{*\}\mapsto \big( \prod_{i\in \phi^{-1}(j)}\C_i\xrightarrow{F_j} \C_j\big)$$
    where $F_j$ preserves small colimits in each variables.
\end{itemize}

This symmetric monoidal structure is closed because we have 
\[\begin{tikzcd}
	&& \mathcal{E} \\
	{\C\times \D} & {\C\otimes \D} \\
	{[2]} & {[1]}
	\arrow[from=2-1, to=1-3]
	\arrow[from=2-1, to=2-2]
	\arrow["{\exists!}"', from=2-2, to=1-3]
	\arrow[from=3-1, to=3-2]
\end{tikzcd}\]
by the property of coCartesian fibration, and then
$$\funl(\C\otimes \D,\mathcal{E})\simeq \fun'(\C\times \D, \mathcal{E})\simeq \funl(\C,\funl(\D,\mathcal{E}))$$
where $\fun'(\C\times \D, \mathcal{E})$ consists of functors preserving small colimits in each variables. Therefore Lurie's tensor product is universal.

Let $\Pr^L\subseteq\Catk$ be the full subcategory consisting of \textit{presentable $\infty$-categories}. Then Lurie's tensor product defines a closed symmetric monoidal structure in $\Pr^L$ inheriting from $\Catk$. Details can be found in \cite[Prop. 4.8.1.15]{HA} and \cite[Prop. 5.5.3.8]{HTT}.

Similarly, if we suppose $\Pr^{St}\subseteq \Pr^L$ is the full subcategory consisting of \textit{presentable stable $\infty$-categories}, then $\Pr^{St}$ is also symmetric monoidal. Moreover, it is shown in \cite[Prop. 4.8.2.18]{HA} that $\Pr^{St}\simeq \Mod_{\mathbf{Sp}}(\Pr^L)$ where $\mathbf{Sp}\simeq \lim(\cdots\xrightarrow{\Omega}s\Set_*\xrightarrow{\Omega}s\Set_*)$ is the \textit{$\infty$-category of spectra}.

\begin{definition}
    \normalfont
A \textit{stable $k$-linear $\infty$-category} is a category in $\Pr^{St}_{k}:=\Mod_{\Mod_k}(\Pr^{St})$.
\end{definition}

Since $\Mod_{\Mod_k}(\Pr^{St})\simeq \Mod_{\Mod_k}(\Pr^{L})=:\Pr^{L}_k$ \cite[Sec. 3.4.1]{HA}, a \textit{presentable $k$-linear $\infty$-category} is stable. So here such two notions coincide.

\section{Deformations of Objects}
Let $\C$ be a stable $k$-linear $\infty$-category i.e. in $\Pr_{k}^{St}$. Suppose $A\in \Alg_{k}^{(1),\aug}$ and then there are two module categories i.e. 
$$\LMod_A(\C)=\LMod_A\otimes_{\Mod_k}\C,\ \ \text{and}\ \ \RMod_A(\C)=\C\otimes_{\Mod_k}\RMod_{A}$$
with functors
$$\LMod_A(\C)\rightarrow \LMod_k(\C)\simeq \C, \ \ \text{and}\ \ \RMod_A(\C)\rightarrow \RMod_k(\C)\simeq \C$$
There is no essential difference between left and right modules because $\LMod_{A}(\C)\simeq\RMod_{A^{\mathrm{rev}}}(\C)$ where $A^{\mathrm{rev}}$ is the algebra $A$ with opposite multiplication.

\begin{definition}
    \normalfont
Given an object $E\in \C$, a \textit{deformation of $E$ over $A$} is an object $E_A\in \RMod_A(\C)$ such that $E_A\otimes_A k\simeq E$ where $A\in \Alg_{k}^{(1),\aug}$.
\end{definition}

Let $\RMod(\C)$ be the $\infty$-category of pairs $(A,E_A)$ where $A\in \Alg_{k}^{(1)}$ and $E_A\in \RMod_{A}(\C)$. Morphisms in $\RMod(\C)$ consist of maps $A\rightarrow B$ and equivalences $E_A\otimes_A B\simeq E_B$. The forgetful functor 
$$\RMod(\C)\rightarrow \Alg_{k}^{(1)},\  (A,E_A)\mapsto A$$
is a \textit{coCartesian fibration} which means if $\RMod(\C)^{\text{coCart}}$ is a subcategory of $\RMod(\C)$ spanned by coCartesian morphisms, then we will get a \textit{left fibration} $\RMod(\C)^{\text{coCart}}\rightarrow \Alg_{k}^{(1)}$ \cite[Prop. 2.4.2.4]{HTT}. 

From \cite[Thm. 3.2.0.1]{HTT}, we roughly have an equivalence
$$\{\text{functors $\D\rightarrow \Cat$}\}/\text{homotopy}\simeq\{\text{coCartesian fibration $\C\rightarrow \D$}\}/\text{equivalence}$$
So in fact, the coCartesian fibration above just means we have a functor $\Alg_{k}^{(1)}\rightarrow \mathbf{Cat}_{\infty}$ sending $A$ to $\Mod_{A}(\C)$ and passing to the left fibration, we replace $\Mod_{A}(\C)$ by its underlying $\infty$-groupoid i.e. Kan complex, which means only equivalences exist.

Given an object $E\in \C\simeq \RMod_k(\C)$, it can be identified with the object $(k,E)$ in $\RMod(\C)^{\text{coCart}}$.

\begin{definition}
    \normalfont
For any object $E\in \C$, the \textit{$\infty$-category of deformations of $E$} is defined to be $\mathrm{Deform}[E]=\RMod(\C)^{\text{coCart}}/(k,E)$.
\end{definition}

This is related to a left fibration $\RMod(\C)^{\text{coCart}}/(k,E)\rightarrow \Alg_{k}^{(1),\aug}$, which is equivalent to the functor 
$$\mathrm{ObjDef}_E:\Alg_{k}^{(1),\aug}\rightarrow \widehat{s\Set},\ A  \mapsto (\RMod_{A}(\C)\times_{\C}\{E\})^{\simeq}$$
where $(\RMod_{A}(\C)\times_{\C}\{E\})^{\simeq}$ is the underlying $\infty$-groupoid of $\RMod_{A}(\C)\times_{\C}\{E\}$. If $A$ is Artin, then $\mathrm{ObjDef}_E(A)$ will be essentially small \cite[Cor. 16.5.2.3]{SAG}, so this defines a functor
$$\mathrm{ObjDef}_E:\Alg_{k}^{(1),\Art}\rightarrow s\Set$$
which is a $1$-proximate formal $\E_1$-moduli problem.

\begin{prop}\label{2.3}
Let $\C$ be a stable $k$-linear $\infty$-category. Given a pullback diagram 
\[\begin{tikzcd}
	A & {A_0} \\
	{A_1} & {A_{01}}
	\arrow[from=1-1, to=1-2]
	\arrow[from=1-1, to=2-1]
	\arrow["\lrcorner"{anchor=center, pos=0.125}, draw=none, from=1-1, to=2-2]
	\arrow[from=1-2, to=2-2]
	\arrow[from=2-1, to=2-2]
\end{tikzcd}\]
in $\Alg_{k}^{(1)}$, we will obtain a fully faithful functor
$$F:\LMod_{A}(\C)\rightarrow \LMod_{A_1}(\C)\times_{\LMod_{A_{01}}(\C)}\LMod_{A_{0}}(\C)$$
\end{prop}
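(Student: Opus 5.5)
The plan follows Lurie's argument in \cite[Sec. 16.2]{SAG} (gluing of modules along a pullback of algebras), first for $\C=\Mod_k$ and then tensored up to a general $\C$. Write $\mathcal{M}_A:=\LMod_A(\C)$. The functor $F$ sends $M$ to the triple $(A_1\otimes_A M,\ A_0\otimes_A M,\ \alpha)$, where $\alpha$ is the canonical equivalence $A_{01}\otimes_{A_1}A_1\otimes_A M\simeq A_{01}\otimes_{A_0}A_0\otimes_A M$. The fiber product $\mathcal{M}_{A_1}\times_{\mathcal{M}_{A_{01}}}\mathcal{M}_{A_0}$ is presentable and stable. Since all base change functors preserve colimits, $F$ preserves small colimits. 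By the adjoint functor theorem, $F$ therefore has a right adjoint $G$, and $G(M_1,M_0,\alpha)=M_1\times_{M_{01}}M_0$, computed in $\mathcal{M}_A$ via the restriction functors (here $M_{01}$ is the common image in $\mathcal{M}_{A_{01}}$). To show $F$ is fully faithful, it suffices to prove that the unit $M\to G F(M)$ is an equivalence for all $M\in\mathcal{M}_A$.

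Concretely, the unit is the map
\[
M\rightarrow (A_1\otimes_A M)\times_{A_{01}\otimes_A M}(A_0\otimes_A M).
\]
Because $\LMod_A(\Mod_k)$ is stable and $-\otimes_A M$ is exact, this map is obtained by tensoring the square $A\to A_1,A_0\to A_{01}$ (a pullback of $A$-bimodules, hence of right $A$-modules) with $M$ over $A$. Pullbacks of right $A$-modules are pushouts, and tensoring preserves these, so the resulting square is still a pullback. Since $A\otimes_A M\simeq M$, the unit is an equivalence. The key point is that fiber products of algebras are computed on underlying modules \cite[Prop. 3.2.2.1]{HA}, so the square is a pullback of right $A$-modules.

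To pass from $\Mod_k$ to a general $\C$, I would use $\LMod_A(\C)\simeq\LMod_A\otimes_{\Mod_k}\C$. Under this identification, the base change functors are $(A_i\otimes_A-)\otimes\id_\C$, and the right adjoints (restrictions) are likewise tensored with $\C$, because restriction along $A\to A_i$ preserves colimits and is $\Mod_k$-linear. Alternatively, one can work directly in $\LMod_A(\C)$: the relative tensor product $A_i\otimes_A M$ is defined there through the action of $\LMod_A$-bimodules on $\LMod_A(\C)$, and it is exact in the bimodule variable. The same pullback argument then applies verbatim, since $\C$ is stable. I would take this direct route.

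The main obstacle is making precise that $G$ is given by the fiber product formula, and hence that the unit is the displayed map. This means identifying the right adjoint of $F$ on the lax limit, in the manner of \cite[Lemma 16.2.2.2]{SAG}. The mapping spaces out of $F(M)$ in the fiber product are computed as the fiber product of the three mapping spaces. By the adjunctions, these become $\Map_A(M,M_1)\times_{\Map_A(M,M_{01})}\Map_A(M,M_0)\simeq \Map_A(M,M_1\times_{M_{01}}M_0)$, which gives $G$ explicitly. Essential surjectivity is not claimed, and indeed fails in general without connectivity hypotheses.
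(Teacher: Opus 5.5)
Your proposal is correct and follows essentially the same route as the paper. Both arguments obtain the right adjoint $G$ given by $M_1\times_{M_{01}}M_0$ and reduce full faithfulness to the unit $M\to (A_1\otimes_A M)\times_{A_{01}\otimes_A M}(A_0\otimes_A M)$ being an equivalence; they check this by viewing the pullback of algebras as a pullback, hence pushout, of right $A$-modules, applying the colimit-preserving functor $-\otimes_A M$, and using stability of $\C$ to turn the pushout back into a pullback. Your mapping-space computation identifying $G$ fills in a step the paper only asserts.
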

Here left modules can also be replaced by right modules.

\begin{proof}
Consider the following diagram
\[\begin{tikzcd}
	\LMod_{A}(\C) & {\LMod_{A_0}(\C)} \\
	{\LMod_{A_1}(\C)} & {\LMod_{A_{01}}(\C)}
	\arrow[from=1-1, to=1-2]
	\arrow[from=1-1, to=2-1]
	\arrow[draw=none, from=1-1, to=2-2]
	\arrow[from=1-2, to=2-2]
	\arrow[from=2-1, to=2-2]
\end{tikzcd}\]
Every functor in the diagram above admit the right adjoint functor, so 
$$F:\LMod_{A}(\C)\rightarrow \LMod_{A_1}(\C)\times_{\LMod_{A_{01}}(\C)}\LMod_{A_{0}}(\C)$$
has the right adjoint functor $G$ which sends the pair $(M_{A_1},M_{A_0},M_{A_{01}})$ to $M_{A_1}\times_{M_{A_{01}}}M_{A_0}$ in $\LMod_{A}(\C)$. To show $F$ is fully faithful, it is enough to show 
$$M\rightarrow (A_1\otimes_A M)\times_{(A_{01}\otimes_A M)}(A_0\otimes_{A}M)$$
is an equivalence. 

Because the forgetful functor $\Alg_{k}^{(1)}\rightarrow \Mod_k$ preserves limits, $A$ is also a pullback in $\Mod_k$. But $\Mod_k$ is a stable $\infty$-category, which means pullback diagrams are also pushout diagrams. Then
\[\begin{tikzcd}
	A & {A_0} \\
	{A_1} & {A_{01}}
	\arrow[from=1-1, to=1-2]
	\arrow[from=1-1, to=2-1]
	\arrow[from=1-2, to=2-2]
	\arrow[from=2-1, to=2-2]
	\arrow["\lrcorner"{anchor=center, pos=0.125, rotate=180}, draw=none, from=2-2, to=1-1]
\end{tikzcd}\]
is a pushout diagram in $\Mod_k$, hence also in $\RMod_{A}$. Note that the tensor product functor $-\otimes_A M:\RMod_{A}\rightarrow \C$ preserves colimits and $\C$ is stable. Now we know
\[\begin{tikzcd}
	{M\simeq A\otimes _A M} & {A_0\otimes_{A}M} \\
	{A_1\otimes_A M} & {A_{01}\otimes_A M}
	\arrow[from=1-1, to=1-2]
	\arrow[from=1-1, to=2-1]
	\arrow[from=1-2, to=2-2]
	\arrow[from=2-1, to=2-2]
\end{tikzcd}\]
is a pushout diagram in $\C$, hence also a pullback diagram.
\end{proof}

The proposition above implies given a pullback diagram $A=\mathrm{pullback}(A_1\rightarrow A_{01}\leftarrow A_0)$ in $\Alg_{k}^{(1),\aug}$, the functor 
$$\mathrm{ObjDef}_E(A)\rightarrow \mathrm{ObjDef}_E(A_1)\times_{\mathrm{ObjDef}_E(A_{01})}\mathrm{ObjDef}_E(A_0)$$
is $(-1)$-truncated i.e. a homotopy equivalence onto its essential image, which means $\mathrm{ObjDef}_E$ is $1$-proximate.

In fact, combining Thm. 16.5.4.1 and Prop. 16.5.6.1 in \cite{SAG}, we have the following theorem.

\begin{theorem}
Suppose $\C$ is a stable $k$-linear $\infty$-category and $E\in \C$. Then the functor $\mathrm{ObjDef}_E:\Alg_{k}^{(1),\mathrm{Art}}\rightarrow s\Set$ is a $1$-proximate formal $\E_1$-moduli problem, whose corresponding formal $\E_1$-moduli problem $\mathrm{ObjDef}_{E}^{\wedge}:=L(\mathrm{ObjDef}_E)$ is equivalent to $\Psi(k\oplus\mathrm{End}_{\C}(E))$ where $\Psi:\Alg_{k}^{(1),\mathrm{aug}}\rightarrow \Moduli_{k}^{(1)}$. Moreover, if $\C$ admits a left complete $t$-structure and $E$ is $n$-connective, then $\mathrm{ObjDef}_E$ is itself a formal $\E_1$-moduli problem.
\end{theorem}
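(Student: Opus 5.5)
The statement has three parts, and I would handle them in order.

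\textbf{Step 1: $1$-proximality.} This is already recorded after Proposition \ref{2.3}. For a pullback $A=A_1\times_{A_{01}}A_0$ of Artin algebras, the functor $F$ is fully faithful. Passing to the fibers over $\{E\}$ and to underlying $\infty$-groupoids, a fully faithful functor induces a $(-1)$-truncated map of spaces. So the comparison map for $\mathrm{ObjDef}_E$ is $(-1)$-truncated, which by definition means $\mathrm{ObjDef}_E$ is $1$-proximate.

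\textbf{Step 2: identifying $L(\mathrm{ObjDef}_E)$.} By the characterization theorem for proximate problems (\cite[Thm. 16.4.2.1]{SAG}), the unit $\mathrm{ObjDef}_E\rightarrow L\,\mathrm{ObjDef}_E$ is $(-1)$-truncated. Rather than computing $L$ directly, I would build a map to $\Psi(k\oplus \End_{\C}(E))$ and compare tangent complexes, using \cite[Prop. 12.2.2.6]{SAG}.

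To build the map, I would use Koszul duality for modules. A deformation $E_A\in\RMod_A(\C)$ with $E_A\otimes_A k\simeq E$ gives a left action of $\mathscr{D}^{(1)}(A)=\End_A(k)$ on $E$, compatible with the augmentation. This is a map $\mathscr{D}^{(1)}(A)\rightarrow \End_{\C}(E)$, i.e. a point of $\Map_{\Alg^{(1),\aug}_k}(\mathscr{D}^{(1)}(A),k\oplus\End_\C(E))$. Making this natural in $A$ is the functoriality input of \cite[Thm. 16.5.4.1]{SAG}, which I would invoke rather than rederive.

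On tangent complexes, evaluate both sides on $k\oplus k[m]$ for $m\ge 0$. Deformations of $E$ over a square-zero extension are classified by maps $E\rightarrow E[m+1]$, so they contribute $\End_\C(E)[m+1]$. This matches $\m_{k\oplus\End(E)}[1+m]$. The map is then an equivalence on tangent complexes, and since $L$ preserves tangent complexes of proximate problems, $L\,\mathrm{ObjDef}_E\simeq\Psi(k\oplus\End_\C(E))$.

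\textbf{Step 3: the connective case.} Here one must show the $(-1)$-truncated comparison map is essentially surjective. Given $(M_1,M_0,M_{01})$ in the fiber product, all deforming $E$, we need $M=M_1\times_{M_{01}}M_0$ to satisfy $A_i\otimes_A M\simeq M_i$.

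I would reduce, via Lurie's factorization of Artin algebra maps into elementary square-zero extensions, to pullbacks along $k\rightarrow k\oplus k[m]$ with $m\ge 1$. I would then check the counit equivalence by induction on the length of the extension. Left completeness of the $t$-structure, together with $E$ being $n$-connective, is exactly what lets the relevant tensor products commute with the limit defining $M$. Connectivity is preserved under the finite filtration, so one can pass to the limit over Postnikov truncations.

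This convergence step is the main obstacle. I would follow \cite[Prop. 16.5.6.1]{SAG}, checking $A_i\otimes_A M\rightarrow M_i$ on each truncation $\tau_{\le p}$, where only finitely many cells matter, and then using left completeness to conclude.
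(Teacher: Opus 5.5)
Your outline follows the paper's route. You get $1$-proximality from the fully faithful gluing functor of Prop.~\ref{2.3}, and you import the identification of $L(\mathrm{ObjDef}_E)$ and the connective case from \cite[Thm.~16.5.4.1, Prop.~16.5.6.1]{SAG}; the paper proves nothing beyond this.

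\textbf{Step 2 rests on a false claim.} Deformations of $E$ over $A=k\oplus k[m]$ are \emph{not} classified by $\mathrm{Ext}^{m+1}_{\C}(E,E)$ in general; the map $\pi_0\mathrm{ObjDef}_E(A)\rightarrow\mathrm{Ext}^{m+1}_{\C}(E,E)$ is in general only injective. Any deformation $E_A$ sits in a fiber sequence $E[m]\rightarrow E_A\rightarrow E$, obtained by tensoring $E_A$ with $k[m]\rightarrow A\rightarrow k$, and its class is the boundary map $E\rightarrow E[m+1]$. Now take $\C=\Mod_k$, $E=\bigoplus_{j\in\mathbb{Z}}k[j(m+1)]$, and $\phi\colon E\xrightarrow{\sim}E[m+1]$ the shift. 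A deformation with class $\phi$ would satisfy $E_A\simeq\mathrm{fib}(\phi)\simeq 0$, contradicting $E_A\otimes_A k\simeq E$. Hence $\mathrm{ObjDef}_E\rightarrow\Psi(k\oplus\End_{\C}(E))$ is not an equivalence on $k\oplus k[m]$. Likewise, $X(k\oplus k[m])\rightarrow LX(k\oplus k[m])$ need not be an equivalence for a $1$-proximate $X$, so ``$L$ preserves tangent complexes'' holds only after looping.

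\textbf{The repair} is to compare loop spaces at the trivial deformation.
For every $E$, $\Omega\,\mathrm{ObjDef}_E(k\oplus k[m])$ is the space of automorphisms of $E\otimes_k A$ restricting to $\id_E$. This is $\Omega^\infty(\End_{\C}(E)[m])$, which matches $\Omega\Psi(k\oplus\End_{\C}(E))(k\oplus k[m])$.
The unit $\mathrm{ObjDef}_E\rightarrow L\,\mathrm{ObjDef}_E$ is $(-1)$-truncated, so it is an isomorphism on $\pi_i$ for $i\geq 1$ at the base point. Hence $L\,\mathrm{ObjDef}_E(k\oplus k[m])\rightarrow\Psi(k\oplus\End_{\C}(E))(k\oplus k[m])$ is an isomorphism on $\pi_{\geq 1}$.
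For $\pi_0$, apply the same argument at level $m+1$ and use $Y(k\oplus k[m])\simeq\Omega Y(k\oplus k[m+1])$ for both formal moduli problems $Y$.
Only with this argument do the tangent complexes agree.

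\textbf{A smaller point in Step 3.} $A_i\otimes_A(-)$ commutes with the finite limit $M_1\times_{M_{01}}M_0$ automatically, by stability, so that is not where the hypotheses enter. They are needed to identify the result with $M_i$ even though $A_1\otimes_A A_1\not\simeq A_1$ in general. That identification is gluing of connective objects over a $\pi_0$-surjective pullback, which is exactly what you would be importing from \cite[Prop.~16.5.6.1]{SAG}.
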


\begin{example}
\normalfont
Suppose $X$ is a classical scheme over $k$ and $\F$ is a coherent sheaf on $X$. Because $\F$ is connective, $\mathrm{ObjDef}_{\F}$ is already a formal $\E_1$-moduli problem. Then the first order deformations of $\F$ are classified by
\begin{align*}
    \pi_0\mathrm{ObjDef}_{\F}(k[x]/x^2)&\simeq \pi_0\Map_{\Alg_{k}^{(1),\aug}}(\mathscr{D}^{(1)}(k[x]/x^2),k\oplus\mathrm{End}_{\mathbf{Coh}(X)}(\F))\\
    &\simeq \pi_0\Map_{\Alg_{k}^{(1)}}(\mathscr{D}^{(1)}(k[x]/x^2),\mathrm{End}_{\mathbf{Coh}(X)}(\F))\\
    &\simeq \mathrm{Ext}^{1}_{\mathcal{O}_X}(\F,\F)
\end{align*}
This covers the classical result in \cite[Thm. 2.7]{Har10}.
\end{example}

\subsection{Deformations of $\E_n$-Algebras}\label{2.1}
Now we suppose $E$ is an $\E_n$-algebra i.e. an object in $\Alg_{k}^{(n)}$ and we want to study its $\E_n$-algebra deformations. In \cite[Cor. 5.1.2.6]{HA}, there is a fully faithful functor
$$\Alg_{k}^{(n+1)}\rightarrow \Alg_{\E_{n}}(\Pr_{k}^{St}),\ A\mapsto \LMod_{A}$$
The $\infty$-category $\Alg_{A}^{(n)}:=\Alg_{\E_n}(\LMod_A)$ consists of $\E_n$-algebras over $A$. The difference between $\Alg_{A}^{(n)}$ and $A/\Alg_{k}^{(n)}$ is that a map of $\E_{n}$-algebras from $A$ to $B$ does not exhibit $B$ as an $\E_n$-algebra object in $\LMod_A$ unless it factors through the $\E_n$-center $\Z_{\E_n}(A)$ \cite[Def. 5.3.1.12]{HA}. 

An \textit{$\E_n$-algebra deformation} of $E$ over $A$ is an object $E_A\in\Alg_{A}^{(n)}$ such that $k\otimes_A E_A\simeq E$. In fact, for a map $A\rightarrow B$ of $\E_{n+1}$-algebras, the functor $B\otimes_A -:\LMod_{A}\rightarrow \LMod_{B}$ is $\E_n$-monoidal \cite[Def. 2.1.3.7]{HA}, so it will induce a functor of algebras $\Alg_{A}^{(n)}\rightarrow \Alg_{B}^{(n)}$ by tensor product.

Similar to the deformation of usual objects, here we obtain a coCartesian fibration $\Alg^{(n)}\rightarrow \Alg_{k}^{(n+1)}$ where $\Alg^{(n)}$ consists of pairs $(A,E_A)$ such that $A\in \Alg_{k}^{(n+1)}$ and $E_A\in \Alg_{A}^{(n)}$. Its subcategory of coCartesian morphisms forms a left fibration $\Alg^{(n),\text{coCart}}\rightarrow \Alg_{k}^{(n+1)}$. The $\infty$-category of deformations for the $\E_n$-algebra $E$ is defined to be $\mathrm{Deform}[E]:=\Alg^{(n),\text{coCart}}/(k,E)$. The corresponding left fibration gives a functor
$$\E_n\mathrm{AlgDef}_E:\Alg_{k}^{(n+1),\aug}\rightarrow \widehat{s\mathbf{Set}},\ A\mapsto (\Alg_{A}^{(n)}\times_{\Alg_{k}^{(n)}} \{E\})^{\simeq}$$

whose value on an Artin algebra is actually an essentially small simplicial set \cite[Cor. 4.17]{BKP}.
\\

Prop. \ref{2.3} can be applied to this case with some modifications. Given a pullback diagram 
\[\begin{tikzcd}
	A & {A_0} \\
	{A_1} & {A_{01}}
	\arrow[from=1-1, to=1-2]
	\arrow[from=1-1, to=2-1]
	\arrow["\lrcorner"{anchor=center, pos=0.125}, draw=none, from=1-1, to=2-2]
	\arrow[from=1-2, to=2-2]
	\arrow[from=2-1, to=2-2]
\end{tikzcd}\]
of $\E_{n+1}$-algebras, we know 
$$\LMod_{A} \rightarrow \LMod_{A_1} \times_{\LMod_{A_{01}}}\LMod_{A_{0}} $$
is fully faithful. Because the functor $\Alg_{\E_n}:\Alg_{\E_n}(\Pr^{L})\rightarrow \Pr^{L}$ sending an $\E_n$-monoidal category $\C$ to $\Alg_{\E_n}(\C)$ consisting of $\E_n$-algebra objects, preserves limits and fully faithful functors, we get a fully faithful functor 
$$\Alg_{\E_n}(\LMod_{A})\rightarrow\Alg_{\E_n}(\LMod_{A_1})\times_{\Alg_{\E_n}(\LMod_{A_{01}})}\Alg_{\E_n}(\LMod_{A_{0}})$$
of algebras. This proves $\E_n\mathrm{AlgDef}_E$ is a $1$-proximate formal $\E_{n+1}$-moduli problem.

An $\E_n$-monoidal category $\C$ is a coCartesian  fibration $\C^\otimes\rightarrow \E_{n}^{\otimes}$ where $\C^\otimes$ is the corresponding $\infty$-operad of $\C$ and $\E_{n}^{\otimes}$ is the \textit{$\infty$-operad of little $n$-cubes}. Then $\Alg_{\E_n}(\C)$ is the full subcategory of $\fun_{/\E_{n}^{\otimes}}(\E_{n}^{\otimes},\C^\otimes)$ spanned by maps of $\infty$-operads \cite[Def. 2.1.3.1]{HA}. For a fully faithful $\E_n$-monoidal functor $\C\rightarrow \D$, the induced map on functor categories $\fun(\E_{n}^{\otimes},\C^\otimes)\rightarrow \fun(\E_{n}^{\otimes},\D^\otimes)$ will also be fully faithful. Since $\Alg_{\E_n}(-)$ is defined by conditions on functors, $\Alg_{\E_n}(\C)\rightarrow \Alg_{\E_n}(\D)$ is fully faithful as well. The property of preserving limits can be proved similarly.

As a generalization of \cite[Prop. 4.19]{BKP}, we get the following theorem.

\begin{theorem}\label{2.6}
Suppose $E$ is an $\E_n$-algebra over $k$. Then $\E_n\mathrm{AlgDef}_E$ is a $1$-proximate formal $\E_{n+1}$-moduli problem, whose corresponding formal moduli problem $\E_n\mathrm{AlgDef}_{E}^{\wedge}$ is equivalent to $\Psi\big(k\oplus \mathrm{fib}(\Z_{\E_n}(E)\rightarrow E)\big)$ where $\Psi:\Alg_{k}^{(n+1),\mathrm{aug}}\rightarrow \Moduli_{k}^{(n+1)}$ and $\Z_{\E_n}(E)$ is the $\E_n$-center of $E$. Moreover, if $E$ is $n$-connective, then $\E_n\mathrm{AlgDef}_E$ will itself be a formal $\E_{n+1}$-moduli problem.
\end{theorem}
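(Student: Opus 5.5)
The argument has three parts: $1$-proximateness (already shown above), identification of the tangent complex, and the connectivity refinement. The first part is done: from the fully faithful functor on $\Alg_{\E_n}(\LMod_{(-)})$ induced by Prop. \ref{2.3}, the comparison map for a pullback of Artin $\E_{n+1}$-algebras is $(-1)$-truncated. Together with $\E_n\mathrm{AlgDef}_E(k)\simeq *$, this shows $\E_n\mathrm{AlgDef}_E$ is $1$-proximate. By the characterization of proximate moduli problems (Thm. 16.4.2.1 of \cite{SAG} quoted above), the unit $\E_n\mathrm{AlgDef}_E\rightarrow L\,\E_n\mathrm{AlgDef}_E$ is then $(-1)$-truncated. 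It remains to identify $L\,\E_n\mathrm{AlgDef}_E$. Since a formal moduli problem is determined by its tangent complex with its non-unital $\E_{n+1}$-structure, we need to compute $\E_n\mathrm{AlgDef}_E(k\oplus k[m])$ for $m\geq 0$; the unit induces equivalences on these values up to the truncation ambiguity, since the loop spaces of the relevant fibers agree.

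For the tangent complex, fix a square-zero extension $A=k\oplus k[m]$. A deformation of $E$ over $A$ is an $\E_n$-algebra in $\LMod_A$ reducing to $E$. I would compare with the $\E_n$-cotangent formalism. Deformations over trivial square-zero extensions are classified by derivations, i.e. by $\E_n$-$E$-module maps $L_E\rightarrow E[m+1]$, where $L_E$ is the $\E_n$-cotangent complex of \cite[Sec. 7.3.5]{HA}. Then I would use the cofiber sequence $U(E)\rightarrow L_E\rightarrow E[n]$ relating the enveloping algebra $U(E)$ of $E$ to $L_E$ (\cite[Thm. 7.3.5.1]{HA}), where $\mathrm{Map}_{U(E)}(U(E),E)\simeq E$ and $\Map_{U(E)}(E,E)\simeq \Z_{\E_n}(E)$. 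This gives
$$\Map(L_E,E[m+1])\simeq \mathrm{fib}\big(\Z_{\E_n}(E)\rightarrow E\big)[n+1+m]$$
up to the correct shift. This matches the tangent complex $\m_R[n+1+m]$ of $\Psi(k\oplus R)$ for the $\E_{n+1}$-formal moduli problem with $R=\mathrm{fib}(\Z_{\E_n}(E)\rightarrow E)$.

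For the multiplicative structure, I would follow \cite[Prop. 4.19]{BKP}, which treats $n=1$ with $\Z_{\E_1}(E)\simeq HH(E)$. The deformation problem is pulled back from the object-deformation problem of $E$ as an object of the $\E_n$-monoidal category $\LMod_E^{\E_n}$ (its $\E_n$-modules), or equivalently of $E$ as a module over $U(E)$. By the preceding theorem on $\mathrm{ObjDef}$, the latter is controlled by $k\oplus\End_{U(E)}(E)=k\oplus \Z_{\E_n}(E)$, and this controlling algebra acquires its $\E_{n+1}$-structure from Dunn additivity and the centralizer universal property. The fiber over $E$ then removes the "deform the underlying object" directions, namely the map to $E=\End_{U(E)}(U(E))$-type term, which produces the fiber. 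I expect this step to be the main obstacle: identifying the non-unital $\E_{n+1}$-structure, and not only the underlying module, so that the map $\Z_{\E_n}(E)\rightarrow E$ is compatible with the augmentations.

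Finally, for $n$-connective $E$, I would show the unit map is also surjective on $\pi_0$, i.e. every point of $L\,\E_n\mathrm{AlgDef}_E(A)$ lifts. By induction on a small-extension decomposition of the Artin algebra $A$ (every Artin algebra is an iterated pullback $A'=A\times_{k\oplus k[m]}k$ with $m\geq n+1$, by Lurie), it suffices to show that deformations glue along such pullbacks. Gluing is the essential surjectivity of $\LMod_{A'}\rightarrow \LMod_A\times_{\LMod_{k\oplus k[m]}}\Mod_k$ on the relevant objects. This holds for connective (suitably bounded) modules by the Lurie gluing argument behind the "left complete $t$-structure, $n$-connective" case of the object theorem. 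Because $E$ is $n$-connective, the glued module is again an $\E_n$-algebra deformation by full faithfulness, so the map is an equivalence.
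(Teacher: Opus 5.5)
The gap is the step you flag yourself as the main obstacle, and the route you sketch for it does not work. Since $E$ is only an $\E_n$-algebra, $\mathrm{fib}(\Z_{\E_n}(E)\to E)$ is not the fiber of a map of $\E_{n+1}$-algebras. What the theorem asserts is a map of non-unital $\E_{n+1}$-algebras $\T_{\E_n\mathrm{AlgDef}^{\wedge}_E}[-n-1]\to\Z_{\E_n}(E)$ with fiber $E[-1]$, and no module-level computation of the tangent complex can produce it.

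Your proposed source for this map fails on three counts:
\begin{itemize}
\item An $\E_n$-algebra deformation $E_A$ changes the algebra, and hence its category of $\E_n$-modules. So it is not an $A$-linear deformation of $E$ as an object of the fixed category $\Mod^{\E_n}_E\simeq\LMod_{U(E)}$, and there is no natural map from $\E_n\mathrm{AlgDef}_E$ to that object-deformation problem.
\item The object-deformation theorem for a stable $(\infty,1)$-category only yields a formal $\E_1$-moduli problem, controlled by $\End_{U(E)}(E)$ as an $\E_1$-algebra. The Dunn-additivity $\E_{n+1}$-structure on $\Z_{\E_n}(E)$ is therefore never tied to the moduli problem.
\item The roles are inverted: $\Z_{\E_n}(E)$ governs deformations of a \emph{category}, and $E$ governs deformations of a distinguished \emph{object}. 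Also, $\End_{U(E)}(U(E))$ is $U(E)^{\mathrm{op}}$, not $E$.
\end{itemize}

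The missing idea is the one the paper uses via Corollary \ref{algebra} and Theorem \ref{mainA}. Pass to $\LMod^n_E\in n\Pr^L_k$ with its unit object $\LMod^{n-1}_E$. Morita theory then gives a fiber sequence $\mathrm{ObjDef}_{\LMod^{n-1}_E\in\LMod^n_E}\to\E_n\mathrm{AlgDef}_E\to\mathrm{CatDef}_{\LMod^n_E}$, so $\E_n\mathrm{AlgDef}^\wedge_E\simeq\mathrm{SimDef}^\wedge_{(\LMod^n_E,\LMod^{n-1}_E)}$. The controlling algebras are $\End^{n+1}(\LMod^n_E)\simeq\Z_{\E_n}(E)$ (Prop. \ref{center}, \cite[Cor. 4.38]{Fra13}) and $\End^n_{\LMod^n_E}(\LMod^{n-1}_E)\simeq E$. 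It is the map of formal $\E_{n+1}$-moduli problems $E_A\mapsto\LMod^n_{E_A}$ that supplies the $\E_{n+1}$-map to $\Z_{\E_n}(E)$. Without it, or an explicit identification of $\E_n\mathrm{AlgDef}^\wedge_E$ with Francis's moduli problem (for which he builds the fiber sequence of non-unital $\E_{n+1}$-algebras), your argument only proves the statement for underlying $k$-modules.

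The rest of your proposal is sound after small repairs.
\begin{itemize}
\item Your $1$-proximateness argument is the paper's.
\item Your cotangent-complex computation of the underlying module is a legitimate, Francis-style alternative to the paper's Morita-theoretic route. However, the sequence of \cite[Thm. 7.3.5.1]{HA} is $U(E)\to E\to L_E[n]$, not $U(E)\to L_E\to E[n]$.
\item Since $X=\E_n\mathrm{AlgDef}_E$ is only $1$-proximate, do not assert that every derivation comes from an honest deformation over $k\oplus k[m]$. Instead compute $T_{LX}(m)\simeq\Omega LX(k\oplus k[m+1])\simeq\Omega X(k\oplus k[m+1])$, using that a monomorphism is an equivalence on loop spaces. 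These loops are automorphisms of the trivial deformation, i.e.\ sections of $E\oplus E[m+1]\to E$, which gives $\Map_{\Mod^{\E_n}_E}(L_E,E[m+1])\simeq\Omega^{\infty}\big(\mathrm{fib}(\Z_{\E_n}(E)\to E)[n+m+1]\big)$.
\item For the connective case the paper gives no argument beyond pointing to \cite{BKP}, and your gluing argument is the right one. Two corrections: elementary extensions are $A'=A\times_{k\oplus k[m]}k$ with $m\geq 1$, not $m\geq n+1$. And the $\E_n$-structure glues because connective modules form an $\E_n$-monoidal full subcategory on which $\LMod_{A'}\to\LMod_A\times_{\LMod_{k\oplus k[m]}}\Mod_k$ is an equivalence.
\end{itemize}
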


As for the proof of the statement that $\E_n\mathrm{AlgDef}_{E}^{\wedge}$ is equivalent to $\Psi\big(k\oplus \mathrm{fib}(\Z_{\E_n}(E)\rightarrow E)\big)$, it combines Theorem \ref{mainA}, Corollary \ref{algebra} and \cite[Cor. 4.38]{Fra13}. It is proved in \cite{Fra13} that there exists a fiber sequence of non-unital $\E_{n+1}$-algebras
$$E[-1]\rightarrow\T_{\E_n\mathrm{AlgDef}_{E}^{\wedge}}[-n-1]\rightarrow \Z_{\E_n}(E)$$
where the $\E_n$-center $\Z_{\E_n}(E)$ is just the $\E_n$-Hochschild cohomology of $E$. Therefore our formal moduli problem $\E_n\mathrm{AlgDef}_{E}^{\wedge}$ when restricted to $\CAlg_{k}^{\Art}$ is equivalent to the algebraic group $\mathrm{Aut}_E$ in \cite[Def. 4.13]{Fra13}.

\paragraph{Deformations of Associative Algebras}
Now we suppose $A$ is an associative algebra i.e. an $\E_1$-algebra or equivalently $A_\infty$-algebra. Then $\E_1\mathrm{AlgDef}_A$ is a $1$-proximate formal $\E_2$-moduli problem and its corresponding formal moduli problem is controlled by the non-unital $\E_2$-algebra $\mathrm{fib}(\Z_{\E_1}(A)\rightarrow A)$ where the $\E_1$-center $\Z_{\E_1}(A)$ is just the usual Hochschild cohomology complex $HH(A)$ of $A$. This coincides with the statement in \cite[p13]{KoS00} that the $A_\infty$-deformation problem of $A$ is controlled by the truncated Hochschild complex
$$HH_{+}(A)=\prod_{n\geq 1}\mathrm{Hom}_{\mathbf{Mod}_k}(A^{\otimes n},A)[-n]$$
which is actually identified with $\mathrm{fib}(HH(A)\rightarrow A)$.

\begin{prop}
For a classical associative algebra $A$, its first order deformations are classified by $HH^2(A)$ and if $HH^3(A)=0$, then any first order deformation can extend to be a second order deformation.
\end{prop}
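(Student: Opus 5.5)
We will deduce both claims from Theorem \ref{2.6} with $n=1$. First we must decide which deformations are meant. A classical (discrete) associative algebra $A$ is $0$-connective, and Theorem \ref{2.6} demands $1$-connectivity for $\E_1\mathrm{AlgDef}_A$ to be formal. So we work with the formal moduli problem $\E_1\mathrm{AlgDef}_A^{\wedge}\simeq\Psi\big(k\oplus\mathrm{fib}(\Z_{\E_1}(A)\to A)\big)$. We note that on $k[x]/x^2$ and $k[x]/x^3$ the classical deformations are also seen by $\E_1\mathrm{AlgDef}_A$ itself. This is because the unit $\E_1\mathrm{AlgDef}_A\to \E_1\mathrm{AlgDef}_A^{\wedge}$ is $(-1)$-truncated, and hence injective on $\pi_0$. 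Put $R=\mathrm{fib}(HH(A)\to A)$, which is the truncated Hochschild complex $HH_+(A)$.

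\textbf{First order deformations.} Since $k[x]/x^2\simeq k\oplus k[0]$, the tangent-complex computation of Section 1 gives
$$\E_1\mathrm{AlgDef}_A^{\wedge}(k[x]/x^2)\simeq \Map\big(\mathrm{Free}_{\E_2}(k[-2]),k\oplus R\big)\simeq R[2],$$
so $\pi_0=H^2(R)$. Take the long exact sequence of $R\to HH(A)\to A$. Because $A$ is discrete, $H^i(A)=0$ for $i\neq 0$, and so $H^i(R)\cong HH^i(A)$ for $i\ge 2$. This gives $H^2(R)\cong HH^2(A)$. Since $\pi_0$ of the naive problem injects, and every Hochschild 2-cocycle $\mu_1$ gives the explicit classical deformation $A[x]/x^2$ with product $ab+x\mu_1(a,b)$, the classification is by $HH^2(A)$.

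\textbf{Extension to second order.} We use the small extension of commutative (hence $\E_2$) Artin algebras
$$k[x]/x^3\to k[x]/x^2\to k\oplus k[1],$$
which exhibits $k[x]/x^3\simeq k[x]/x^2\times_{k\oplus k[1]}k$. The fiber product is taken over surjections on $H^0$. The hard step is checking this pullback square in $\Alg_k^{(2),\Art}$; we would argue it as in \cite[Sec. 12.1]{SAG}, where square-zero extensions by $k$ are classified by derivations into $k[1]$. Granting it, the formal moduli problem $X=\E_1\mathrm{AlgDef}_A^{\wedge}$ satisfies
$$X(k[x]/x^3)\simeq X(k[x]/x^2)\times_{X(k\oplus k[1])}*,\qquad X(k\oplus k[1])\simeq R[3].$$
A first order deformation $\xi$ therefore lifts if and only if its image $\mathrm{ob}(\xi)$ in $\pi_0X(k\oplus k[1])=H^3(R)\cong HH^3(A)$ vanishes. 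When $HH^3(A)=0$ the obstruction vanishes automatically, so $\xi$ lifts to a point of $X(k[x]/x^3)$.

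Finally we must return from $X$ to genuine classical deformations over $k[x]/x^3$. Any point of $X(k[x]/x^3)$ is given by a Maurer–Cartan element $x\mu_1+x^2\mu_2$ in $HH_+(A)[1]\otimes\m$. Reading this concretely, the obstruction is the class of the Gerstenhaber square $[\mu_1,\mu_1]$, and $\mu_2$ solves $\delta\mu_2=-\tfrac12[\mu_1,\mu_1]$. These data define the associative product $ab+x\mu_1+x^2\mu_2$ on $A[x]/x^3$, which is an honest discrete algebra. This concrete check is what avoids the connectivity issue of Theorem \ref{2.6}.
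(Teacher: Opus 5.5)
Your proof reaches the right conclusion and follows the same computational skeleton as the paper: $\pi_0$ at $k[x]/x^2$ is $H^2(R)$ via $\mathrm{Free}_{\E_2}(k[-2])$; the long exact sequence of $R\to HH(A)\to A$ for discrete $A$ gives $H^i(R)\cong HH^i(A)$ for $i\ge 2$; and the square $k[x]/x^3\simeq k[x]/x^2\times_{k\oplus k[1]}k$ puts the obstruction in $HH^3(A)$. The square is standard, and the paper uses it without comment.

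The real difference is which functor you compute on. The paper works with the naive problem $\E_1\mathrm{AlgDef}_A$ directly. It justifies this by saying that $A$ is connective, so by Theorem \ref{2.6} the naive problem is already formal. Its exact sequences are therefore statements about honest deformations, and nothing further is needed. The clause ``$n$-connective'' in Theorem \ref{2.6} is meant as in the ObjDef theorem, namely bounded below for some integer $n$. Your reading of it as literal $1$-connectivity comes from a clash with the $\E_n$ index.

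Because of that reading, you work in $L(\E_1\mathrm{AlgDef}_A)$, and the detour costs you in two places. First, $(-1)$-truncatedness of the unit only gives an injection $\pi_0\E_1\mathrm{AlgDef}_A(k[x]/x^2)\hookrightarrow HH^2(A)$. Second, a lift in $L(\E_1\mathrm{AlgDef}_A)(k[x]/x^3)$ need not come from an honest deformation; this is exactly the curved phenomenon.

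You close both gaps with the explicit cocycle and Maurer--Cartan dictionary. That dictionary silently assumes two things you do not prove:
\begin{itemize}
\item the dgLa structure induced on $R[1]$ from the $\E_2$-structure is the Gerstenhaber bracket on $HH_+(A)[1]$;
\item the unit map sends the product $ab+x\mu_1(a,b)+x^2\mu_2(a,b)$ to the class of $x\mu_1+x^2\mu_2$.
\end{itemize}
Without these, ``injective plus every cocycle gives a deformation'' does not show the abstract map is a bijection.

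Once you grant the dictionary, your last paragraph is just Gerstenhaber's classical argument: $[\mu_1,\mu_1]$ is a $3$-cocycle, and it is killed by $\mu_2$ when $HH^3(A)=0$. That argument proves the proposition on its own and makes the formal-moduli lifting redundant. In short, the paper's route is purely formal and rests on the formality input from Theorem \ref{2.6}. Yours avoids that input but effectively falls back on classical deformation theory plus an unproved comparison.
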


\begin{proof}
Since $A$ is a classical algebra especially a connective associative algebra, the naive deformation problem $\E_1\mathrm{AlgDef}_A$ is already a formal $\E_{2}$-moduli problem. 
\begin{align*}
    \E_1\mathrm{AlgDef}_A(k[x]/x^2)&\simeq\Map_{\Alg_{k}^{(2),\aug}}(\mathscr{D}^{(2)}(k[x]/x^2),k\oplus \mathrm{fib}(HH(A)\rightarrow A))\\
    &\simeq \Map_{\Alg_{k}^{(2)}}(\mathrm{Free}_{\E_2}(k[-2]),\mathrm{fib}(HH(A)\rightarrow A))\\
    &\simeq \Map_{\Mod_k}(k[-2],\mathrm{fib}(HH(A)\rightarrow A))\\
    &\simeq \mathrm{fib}\big( \Map_{\Mod_k}(k[-2],HH(A))\rightarrow  \Map_{\Mod_k}(k[-2],A) \big)
\end{align*}
Using long exact sequences, we have
$$H^1(A)=0\rightarrow\pi_0\E_1\mathrm{AlgDef}_A(k[x]/x^2)\rightarrow HH^2(A)\rightarrow H^2(A)=0$$
Therefore $\pi_0\E_1\mathrm{AlgDef}_A(k[x]/x^2)\simeq HH^2(A)$.

For the second order deformation, at first we have the following pullback diagram
\[\begin{tikzcd}
	{k[x]/x^3} & k \\
	{k[x]/x^2} & {k\oplus k[1]}
	\arrow[from=1-1, to=1-2]
	\arrow[from=1-1, to=2-1]
	\arrow["\lrcorner"{anchor=center, pos=0.125}, draw=none, from=1-1, to=2-2]
	\arrow[from=1-2, to=2-2]
	\arrow[from=2-1, to=2-2]
\end{tikzcd}\]
which means 
$$\E_1\mathrm{AlgDef}_A(k[x]/x^3)\simeq \mathrm{fib}\big(\E_1\mathrm{AlgDef}_A(k[x]/x^2)\rightarrow \E_1\mathrm{AlgDef}_A(k\oplus k[1]) \big)$$
where 
$$\E_1\mathrm{AlgDef}_A(k\oplus k[1])\simeq \Map_{\Alg_{k}^{(2)}}(\mathrm{Free}_{\E_2}(k[-3]),\mathrm{fib}(HH(A)\rightarrow A))$$
This computes $\pi_0\E_1\mathrm{AlgDef}_A(k\oplus k[1])\simeq HH^3(A)$ and we have the following exact sequence
$$\pi_0\E_1\mathrm{AlgDef}_A(k[x]/x^3)\rightarrow HH^2(A)\rightarrow HH^3(A)$$
So that if $HH^3(A)=0$, every first order deformation in $\pi_0\E_1\mathrm{AlgDef}_A(k[x]/x^2)$ has a lift of second order deformation.
\end{proof}

\paragraph{Classical Deformation Quantization}
We still suppose $A$ is a classical associative algebra. For formal deformations,
\begin{align*}
\pi_0\E_1\mathrm{AlgDef}_A(k[[\hbar]])&\simeq \pi_0\Map_{\Alg_{k}^{(2),\aug}}(\mathscr{D}^{(2)}(k[[\hbar]]),k\oplus HH_{+}(A)) \\
&\simeq \pi_0\Map_{\mathbf{dgLa}_k}(\mathscr{D}(k[[\hbar]]),HH_{+}(A)[1])\\
&\simeq \mathrm{MC}(HH_{+}(A)[1]\otimes \hbar\cdot k[[\hbar]])/\sim,\  \text{quotient by gauge action}\\
&\simeq \mathrm{MC}(HH(A)[1]\otimes \hbar\cdot k[[\hbar]])/\sim,\ \text{it only involves elements in $HH(A)$ at $\text{deg}=1, 2$}
\end{align*}

Now we assume $M$ is a smooth manifold and $A=C^{\infty}(M)$ over $k=\mathbb{R}$ consists of $\mathbb{R}$-valued smooth functions on $M$. Let $T_{poly}(M):=\Gamma\big(M,Sym_\mathbb{R}(T_M[-1])\big)[1]$ be a dgLa with $0$ differential and \textit{Schouten bracket}, where $T_M$ is the tangent bundle of $M$. Then a \textit{Poisson structure} $\alpha$ is just an element in $T^{1}_{poly}(M)=\Gamma(M,\bigwedge^2 T_M)$ satisfying the Maurer-Cartan equation i.e. $d\alpha+\frac{1}{2}[\alpha,\alpha]=0$. There is another dgLa $D_{poly}(M)$ which is an equivalent subalgebra of $HH(A)[1]$ consisting of \textit{polydifferential operators}. In \cite[Sec. 4.6.2]{Kon03}, Kontsevich shows there is an $L_\infty$-equivalence between $T_{poly}(M)$ and $D_{poly}(M)$.

A Poisson structure $\alpha\in T^1_{poly}(M)$ satisfying $[\alpha,\alpha]=0$ will formally induce $\alpha\cdot\hbar\in T^1_{poly}(M)[[\hbar]]$ satisfying $[\alpha\cdot \hbar,\alpha\cdot \hbar]=0$. By the $L_\infty$-equivalence above, $\alpha\cdot\hbar$ corresponds to an element in $\mathrm{MC}\big(HH(A)[1]\otimes_k \hbar\cdot k[[\hbar]]\big)/\sim$, which is a $k[[\hbar]]$-linear associative algebra. Such an algebra is called the \textit{deformation quantization with respect to $\alpha$}.

\section{Deformations of Plain Categories}
Suppose $\C$ is a stable $k$-linear $\infty$-category. We study its deformations as a plain $\infty$-category i.e  without any monoidal structure first.

Let $A\in \Alg_{k}^{(2),\aug}$ be an augmented $\E_2$-algebra. Then $\LMod_A$ will be an $\E_1$-monoidal category in $\Pr^{St}_{k}$. Categories in $\LMod_{\LMod_A}(\Pr^{St}_k)$ (resp. $\RMod_{\LMod_A}(\Pr^{St}_k)$) are called \textit{stable left (resp. right) $A$-linear $\infty$-categories}. For a stable $k$-linear $\infty$-category $\C$, its deformation over $A$ is a stable right $A$-linear $\infty$-category $\C_A$ such that $\C_A\otimes_A k:=\C_A\otimes_{\LMod_A}\LMod_k\simeq \C$.

Now assume $\mathbf{RCat}(k)$ is the $\infty$-category consisting of pairs $(A,\D_A)$ where $A$ is an $\E_2$-algebra and $\D_A$ is a stable right $A$-linear $\infty$-category. Morphisms in $\mathbf{RCat}(k)$ are pairs $(f,\mu)$ where $f:A\rightarrow B$ is a map of $\E_2$-algebras and $\mu:\D_A\otimes_A B\simeq \D_B$ is an equivalence. This gives a coCartesian fibration $\mathbf{RCat}(k)\rightarrow \Alg_{k}^{(2)}$ whose subcategory $\mathbf{RCat}(k)^{\text{coCart}}$ spanned by coCartesian morphisms will be a left fibration. The \textit{$\infty$-category of deformations of $\C$} is 
$$\mathrm{Deform}[\C]:=\mathbf{RCat}(k)^{\text{coCart}}/(k,\C)$$
whose corresponding left fibration over $\Alg_{k}^{(2),\aug}$ is related to a functor 
$$\mathrm{CatDef}_\C:\Alg_{k}^{(2),\aug}\rightarrow \widehat{s\mathbf{Set}},\ A\mapsto \big(\RMod_{\LMod_A}(\Pr^{St}_{k})\times_{\Pr^{St}_k}\{\C\}\big)^{\simeq}$$

It is stated in \cite[Cor. 16.6.2.2.]{SAG} that the value of $\mathrm{CatDef}_\C$ on Artin algebras is locally small, but we can work in a larger universe.

\begin{theorem}\label{3.1}
The functor $\mathrm{CatDef}_\C$ when restricted to $\Alg_{k}^{(2),\mathrm{Art}}$ defines a $2$-proximate formal $\E_2$-moduli problem in a larger universe.
\end{theorem}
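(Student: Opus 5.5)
We must check two things: $\mathrm{CatDef}_\C(k)\simeq *$, and for every pullback $A=A_1\times_{A_{01}}A_0$ in $\Alg_{k}^{(2),\mathrm{Art}}$ with $H^0$-surjective legs, the map
$$\theta:\mathrm{CatDef}_\C(A)\rightarrow \mathrm{CatDef}_\C(A_1)\times_{\mathrm{CatDef}_\C(A_{01})}\mathrm{CatDef}_\C(A_0)$$
is $0$-truncated, i.e. injective on $\pi_0$ of each fiber component and bijective on $\pi_n$ for $n\geq 1$. The first point is immediate: the fiber over $\C$ of $\RMod_{\LMod_k}(\Pr^{St}_k)\simeq\Pr^{St}_k\rightarrow\Pr^{St}_k$ is contractible. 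Smallness issues are handled by enlarging the universe, as the statement allows, so the values of $\mathrm{CatDef}_\C$ are honest (large) Kan complexes.

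For the second point, I would use the categorified version of Proposition \ref{2.3}. The pullback of $\E_2$-algebras gives a pullback square of $\E_1$-monoidal categories, and by Proposition \ref{2.3} the functor $\LMod_A\rightarrow \LMod_{A_1}\times_{\LMod_{A_{01}}}\LMod_{A_0}$ is fully faithful and $\E_1$-monoidal. Set $\mathcal{R}(B):=\RMod_{\LMod_B}(\Pr^{St}_k)$. The base-change functor
$$\Phi:\mathcal{R}(A)\rightarrow \mathcal{R}(A_1)\times_{\mathcal{R}(A_{01})}\mathcal{R}(A_0),\qquad \D\mapsto (\D\otimes_A A_1,\ \D\otimes_A A_0,\ \text{gluing})$$
has a right adjoint $G$ taking $(\D_1,\D_0,\D_{01})$ to the fiber product $\D_1\times_{\D_{01}}\D_0$, computed in $\Pr^{St}_k$ and viewed as a right $\LMod_A$-module. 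The core claim is that $\Phi$ is fully faithful \emph{on mapping spaces up to the right truncation}: for $\D,\D'\in\mathcal{R}(A)$ we need
$$\Map_{\mathcal{R}(A)}(\D,\D')\rightarrow \Map(\Phi\D,\Phi\D')$$
to be $(-1)$-truncated, and we need the same on the underlying groupoids after fixing the reduction to $\C$. Then $\theta$, being the induced map of cores over $\{\C\}$, has $(-1)$-truncated maps on path spaces, which is exactly $0$-truncatedness.

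To prove that claim, I would write $\Map_{\mathcal{R}(A)}(\D,\D')\simeq \Map_{\mathcal{R}(A)}(\D, G\Phi\D')$ precomposed with the unit $\D'\rightarrow G\Phi\D'$. It then suffices to show that the unit $u:\D'\rightarrow (\D'\otimes_A A_1)\times_{\D'\otimes_A A_{01}}(\D'\otimes_A A_0)$ is fully faithful, since a fully faithful colimit-preserving functor induces monomorphisms on the cores of functor categories. Fully faithfulness of $u$ is checked on mapping objects: $\D'\otimes_A A_i\simeq \D'\otimes_{\LMod_A}\LMod_{A_i}$, and for compactly-generated-type objects, mapping spectra in $\D'\otimes_A A_i$ are base changes $\mathrm{Hom}(X,Y)\otimes_A A_i$. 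This reduces, exactly as in the proof of Proposition \ref{2.3}, to the fact that the square $A\rightarrow A_0,A_1\rightarrow A_{01}$ is a pushout of $A$-modules and tensoring preserves it.

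The main obstacle is this last reduction for a general (not compactly generated) $\D'$, where mapping objects in the relative tensor product are not simply base changes. I would first try writing $\D'\otimes_{\LMod_A}\LMod_{A_i}\simeq \RMod_{A_i}(\D')$, the modules over the image of $A_i$ in $\D'$. Then $u$ becomes $\RMod_A(\D')\rightarrow \RMod_{A_1}(\D')\times_{\RMod_{A_{01}}(\D')}\RMod_{A_0}(\D')$, and its full faithfulness is literally the argument of Proposition \ref{2.3} with $\C$ replaced by $\D'$ viewed through its $\LMod_A$-action. The step requiring care is justifying that identification using dualizability of $\LMod_{A_i}$ over $\LMod_A$, which holds because $\LMod_{A_i}$ is a module category. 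This is also why the statement only gets $2$-proximate rather than better: $u$ need not be essentially surjective, so equivalences are not reflected beyond $\pi_0$-injectivity.
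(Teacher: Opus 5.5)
Your proposal is correct and follows essentially the same route as the paper. Both show that the unit $\mathcal{E}\to(\mathcal{E}\otimes_A A_1)\times_{\mathcal{E}\otimes_A A_{01}}(\mathcal{E}\otimes_A A_0)$ is fully faithful, deduce full faithfulness on $\fun^{L}_{A}$-categories, and restrict to equivalences. The one variation is how the unit is shown fully faithful: the paper tensors the adjunction $F\dashv G$ of Proposition \ref{2.3} with $\id_{\mathcal{E}}$, whereas you identify $\mathcal{E}\otimes_A A_i\simeq\RMod_{A_i}(\mathcal{E})$ and rerun the argument of Proposition \ref{2.3} inside $\mathcal{E}$, which spares you from needing $\mathcal{E}\otimes_{\LMod_A}(-)$ to commute with the fiber product of the $\LMod_{A_i}$.

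One small fix: your opening gloss of \emph{$0$-truncated} is garbled as written, and read literally it describes a monomorphism. It should say injective on $\pi_1$ and bijective on $\pi_n$ for $n\geq 2$ at every basepoint, which is exactly your later formulation via $(-1)$-truncated maps on path spaces.
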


\begin{proof}
We prove for every pullback diagram
\[\begin{tikzcd}
	A & {A_0} \\
	{A_1} & {A_{01}}
	\arrow[from=1-1, to=1-2]
	\arrow[from=1-1, to=2-1]
	\arrow["\lrcorner"{anchor=center, pos=0.125}, draw=none, from=1-1, to=2-2]
	\arrow[from=1-2, to=2-2]
	\arrow[from=2-1, to=2-2]
\end{tikzcd}\]
in $\Alg_{k}^{(2),\aug}$, the functor 
$$\mathrm{CatDef}_\C(A)\rightarrow \mathrm{CatDef}_\C(A_1)\times_{\mathrm{CatDef}_\C(A_{01})}\mathrm{CatDef}_\C(A_0)$$
is $0$-truncated. In fact, from Prop. \ref{2.3} we have a fully faithful functor
$$F:\LMod_{A}\rightarrow \LMod_{A_1}\times_{\LMod_{A_{01}}}\LMod_{A_{0}}$$
Then for any stable right $A$-linear $\infty$-category $\mathcal{E}\in \RMod_{\LMod_A}(\Pr^{St}_k)$, the functor 
$$\mathcal{E}\simeq\mathcal{E}\otimes_{A} A\rightarrow (\mathcal{E}\otimes_A A_1)\times_{(\mathcal{E}\otimes_{A} A_{01})}(\mathcal{E}\otimes_A A_{0})$$
is also fully faithful. That is because this functor has a right adjoint $\id_{\mathcal{E}}\otimes G$ where $G$ is the right adjoint of $F$. We have already known $\id_{\LMod_A}\simeq G\circ F$, so that $\id_{\mathcal{E}}\otimes \id_{\LMod_A}\simeq (\id_{\mathcal{E}}\otimes G)\circ (\id_{\mathcal{E}}\otimes F)$.

Given another $\D\in \RMod_{\LMod_A}(\Pr^{St}_k)$, the functor
\begin{align*}
\fun_{A}^L(\D,\mathcal{E})&\rightarrow \fun_{A}^L(\D,\mathcal{E}\otimes_A A_1)\times_{\fun_{A}^L(\D,\mathcal{E}\otimes_{A}A_{01})}\fun_{A}^{L}(\D,\mathcal{E}\otimes_{A} A_0)\\
&\simeq \fun_{A_1}^L(\D\otimes_{A} A_1,\mathcal{E}\otimes_A A_1)\times_{\fun_{A_{01}}^L(\D\otimes_{A} A_{01},\mathcal{E}\otimes_{A}A_{01})}\fun_{A_0}^{L}(\D\otimes_{A} A_0,\mathcal{E}\otimes_{A} A_0)
\end{align*}
is fully faithful. In particular, considering the subcategory consisting of equivalences, the functor \
$$\mathbf{Iso}_{A}(\D,\mathcal{E})\rightarrow \mathbf{Iso}_{A_1}(\D\otimes_{A} A_1,\mathcal{E}\otimes_A A_1)\times_{\mathbf{Iso}_{A_{01}}(\D\otimes_{A} A_{01},\mathcal{E}\otimes_{A}A_{01})}\mathbf{Iso}_{A_0}(\D\otimes_{A} A_0,\mathcal{E}\otimes_{A} A_0)$$
is fully faithful, which means
$$\mathrm{CatDef}_\C(A)\rightarrow \mathrm{CatDef}_\C(A_1)\times_{\mathrm{CatDef}_\C(A_{01})}\mathrm{CatDef}_\C(A_0)$$
is $0$-truncated. Here we can suppose $\D$ and $\mathcal{E}$ are deformations of $\C$ over $A$.
\end{proof}

Let $\mathrm{CatDef}_{\C}^\wedge$ denote the corresponding formal $\E_2$-moduli problem of $\mathrm{CatDef}_{\C}$, and then the natural morphism $\mathrm{CatDef}_\C\rightarrow \mathrm{CatDef}_{\C}^\wedge$ will be $0$-truncated. There is an example in \cite[Example 4.11]{BKP} such that $\mathrm{CatDef}_\C$ is not equivalent to $\mathrm{CatDef}_{\C}^\wedge$. In some special cases, we can find a replacement of this $2$-proximate formal $\E_2$-moduli problem which can be closer to $\mathrm{CatDef}_{\C}^\wedge$.
\\

Now we suppose our $\C$ is compactly generated and $\mathrm{CatDef}_{\C}^c$ is the subfunctor of $\mathrm{CatDef}_{\C}$ sending any augmented $\E_2$-algebra $A$ to the underlying $\infty$-groupoid of the $\infty$-category consisting of $A$-linear deformations of $\C$ which are compactly generated. In \cite[Prop. 16.6.6.2]{SAG}, Lurie shows it takes values of Artin algebras in essentially small simplicial sets. Since the inclusion functor $\mathrm{CatDef}_{\C}^c\rightarrow \mathrm{CatDef}_{\C}$ is $(-1)$-truncated, $\mathrm{CatDef}_{\C}^c$ is also a $2$-proximate formal $\E_2$-moduli problem and we can regard $\mathrm{CatDef}_{\C}^\wedge$ as the corresponding formal $\E_2$-moduli problem for both of them.

\begin{definition}
    \normalfont
Let $\C$ be a stable $k$-linear $\infty$-category. $\C$ is \textit{tamely compactly generated} if 
\begin{itemize}
    \item[(1).] $\C$ is compactly generated,
    \item[(2).] for every pair $(C,D)$ of compact objects in $\C$, $\mathrm{Ext}^{n}_{\C}(C,D):=\pi_{-n}\Map_{\C}(C,D)=0$ for $n\gg 0$.
\end{itemize}
\end{definition}

In \cite[Prop. 16.6.9.1]{SAG}, Lurie proves if $\C$ is tamely compactly generated, then the subfunctor $\mathrm{CatDef}_{\C}^{\text{tcg}}$ consisting of tamely compactly generated deformations of $\C$ is a $1$-proximate formal $\E_2$-moduli problem. In this case, we will have $\mathrm{CatDef}_{\C}^{c}=\mathrm{CatDef}_{\C}^{\text{tcg}}$, so that $\mathrm{CatDef}_{\C}^{c}$ is also $1$-proximate.

\begin{definition}
    \normalfont
Let $\C$ be a stable $k$-linear $\infty$-category. An object $C\in \C$ is \textit{unobstructible} if $C$ is compact and $\mathrm{Ext}^{n}_{\C}(C,C)=0$ for $n\geq 2$. 

We say $\C$ is \textit{tamely compactly generated by unobstructible objects} if $\C$ is tamely compactly generated and there exists a collection of unobstructible objects generating $\C$ under small colimits.
\end{definition}

It is shown in \cite[Cor. 16.6.10.3]{SAG} that if $\C$ is tamely compactly generated by unobstructible objects, then $\mathrm{CatDef}_{\C}^{c}$ is itself a formal $\E_2$-moduli problem. Later we will see a similar result also exists in $\E_n$-monoidal deformation theory. Combining all of these statements, we obtain the following theorem.

\begin{theorem}
Suppose $\C$ is a compactly generated stable $k$-linear $\infty$-category. Then $\mathrm{CatDef}_{\C}^{c}$ is a $2$-proximate formal $\E_2$-moduli problem whose corresponding formal moduli problem $\mathrm{CatDef}_{\C}^\wedge$ is equivalent to $\Psi(k\oplus \mathrm{End}_{\Z_{\E_0}(\C)}(1))$ where $\Z_{\E_0}(\C)=\fun^L(\C,\C)$ and $\Psi:\Alg_{k}^{(2),\mathrm{aug}}\rightarrow \Moduli_{k}^{(2)}$. If $\C$ is tamely compactly generated, then $\mathrm{CatDef}_{\C}^{c}$ will be $1$-proximate. Moreover, if $\C$ is tamely compactly generated by unobstructible objects, then $\mathrm{CatDef}_{\C}^{c}$ is itself a formal $\E_2$-moduli problem.
\end{theorem}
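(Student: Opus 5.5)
The plan is to assemble the theorem from the results already recorded in this section, adding one new ingredient: the identification of the tangent complex of $\mathrm{CatDef}_{\C}^\wedge$.

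\textbf{Step 1: proximity and the comparison map.} By Theorem \ref{3.1}, $\mathrm{CatDef}_\C$ is $2$-proximate. The inclusion $\mathrm{CatDef}_{\C}^{c}\rightarrow \mathrm{CatDef}_{\C}$ is $(-1)$-truncated, hence in particular $0$-truncated. So the characterization of $m$-proximate problems (the equivalence of (1) and (2) in the theorem of Section 1.1) shows that $\mathrm{CatDef}_{\C}^{c}$ is $2$-proximate. It also shows that the two functors have the same localization $L$, since by (3) the composite to $L\mathrm{CatDef}_\C$ is $0$-truncated. I will check the last claim directly by showing that the induced map of tangent complexes is an equivalence.

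\textbf{Step 2: the tangent complex.} This is the main step. By \cite[Prop. 12.2.2.6]{SAG}, it suffices to identify $\T_{\mathrm{CatDef}_\C^\wedge}[-2]$, as a non-unital $\E_2$-algebra, with $\End_{\fun^L(\C,\C)}(\id_\C)$. I will compute the values on the square-zero extensions $k\oplus k[m]$.
\begin{itemize}
\item A deformation of $\C$ over $k\oplus k[m]$ is a $\LMod_{k\oplus k[m]}$-module category.
\item Since $k\oplus k[m]$ is the pullback $k\times_{k\oplus k[m+1]}k$, the proximate pullback square expresses such a deformation as a loop at the trivial deformation over $k\oplus k[m+1]$.
\item Iterating this in $m$ and passing to $L$ identifies $L\mathrm{CatDef}_\C(k\oplus k[m])$ with a delooping of the space of $\LMod_{k\oplus k[m']}$-linear autoequivalences of $\C\otimes \LMod_{k\oplus k[m']}$ that reduce to the identity.
\item For $m'$ large, these autoequivalences are $\id+\varepsilon\phi$ with $\phi\in\Map(\id_\C,\id_\C)$ of the appropriate degree. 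This gives $\Omega^\infty$ of $\End(\id_\C)[2+m]$, matching $\Psi(k\oplus \End(\id_\C))(k\oplus k[m])\simeq \End(\id_\C)[2+m]$.
\end{itemize}
To obtain the $\E_2$-structure rather than only the underlying module, I will instead construct a natural map $\mathrm{CatDef}_\C\rightarrow \Psi(k\oplus\End(\id_\C))$, following Lurie \cite[Sec. 16.6]{SAG}. Its two ingredients are:
\begin{itemize}
\item Koszul duality between $\LMod_A$-module structures on $\C$ and $\E_2$-maps $\Dn(A)\rightarrow \Z_{\E_0}(\C)$-type centers, using that $\End(\id_\C)$ is the $\E_2$-center of $\C$;
\item then checking that the map is an equivalence on tangent complexes by the computation above.
\end{itemize}
This construction, especially the identification of the actions of $\LMod_A$ on $\C$ with $\E_2$-data valued in $\End(\id_\C)$, is where I expect the real difficulty. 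I would quote it from \cite[Thm. 16.6.4.1]{SAG} if permitted.

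\textbf{Step 3: tame and unobstructible cases.} Assume $\C$ is tamely compactly generated.
\begin{itemize}
\item Every compactly generated deformation over an Artin algebra is again tamely compactly generated. To see this, lift compact generators and bound Ext groups via the finite filtration of $A$ by square-zero extensions. Hence $\mathrm{CatDef}_{\C}^{c}=\mathrm{CatDef}_{\C}^{\mathrm{tcg}}$, and this is $1$-proximate by \cite[Prop. 16.6.9.1]{SAG}.
\item Now assume in addition that there are unobstructible generators. It suffices to show that the map to the pullback is essentially surjective on $\pi_0$ for small extensions $A=A_1\times_{A_{01}}A_0$. Given compatible compactly generated deformations, the pullback category contains lifts of the unobstructible generators: their obstructions lie in $\mathrm{Ext}^{\geq2}=0$, which is where the object deformation theorem is used.
\item These lifts generate a compactly generated $A$-linear deformation realizing the given pair. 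This is \cite[Cor. 16.6.10.3]{SAG}, and it shows that $\mathrm{CatDef}_{\C}^{c}$ is a formal $\E_2$-moduli problem.
\end{itemize}
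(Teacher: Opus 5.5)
Your proposal is correct and follows essentially the same route as the paper, which simply assembles Theorem~\ref{3.1}, the $(-1)$-truncatedness of $\mathrm{CatDef}_{\C}^{c}\rightarrow\mathrm{CatDef}_{\C}$, Lurie's identification of $\mathrm{CatDef}_{\C}^\wedge$ from \cite[Sec.~16.6]{SAG}, and \cite[Prop.~16.6.9.1, Prop.~16.6.9.2, Cor.~16.6.10.3]{SAG} for the tame and unobstructible cases. Your tangent-complex computation in Step 2 is extra heuristic support; like you, the paper simply quotes Lurie for the $\E_2$-structure. The claim that the two functors share a localization is best justified as you indicate: a $(-1)$-truncated map is an isomorphism on $\pi_{\geq 1}$ at the base point, and hence on tangent complexes.
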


\subsection{Simultaneous Deformations}
For a stable $k$-linear $\infty$-category $\C$, its has a natural $\E_0$-structure and every object $E\in \C$ can be thought as the unit, so that the $\E_0$-monoidal deformation theory is acatually to deform the pair $(\C,E)$. Such deformations are called \textit{simultaneous deformations}.

For a pair $(\C,E)$, a deformation of it over an augmented $\E_2$-algebra $A$ is a pair $(\C_A,E_A)$ where $\C_A\in \RMod_{\LMod_A}(\Pr^{St}_k)$ satisfying $\C_A\otimes_A k\simeq \C$ and $E_A\otimes _A k\simeq E$ when identifying $\C_A\otimes_A k$ with $\C$. 

Suppose $\mathbf{RCat}_{k}^*$ is the $\infty$-category consisting of triples $(A,\D_A,E_A)$. Morphisms in it are triples $(f,u,v)$ where $f:A\rightarrow B$ is a map of $\E_2$-algebras, $u:\D_A\otimes_A B\simeq \D_B$ and $v:E_A\otimes_A B\simeq E_B$. Similarly, we get a left fibration $\mathbf{RCat}_{k}^{*,\ \text{coCart}}\rightarrow \Alg_{k}^{(2)}$. The \textit{$\infty$-category of deformations of $(\C,E)$} is defined to be $\mathbf{RCat}_{k}^{*,\ \text{coCart}}/(k,\C,E)$. This gives a functor
$$\mathrm{SimDef}_{(\C,E)}:\Alg_{k}^{(2),\aug}\rightarrow \widehat{s\mathbf{Set}},\ A\mapsto \Big(\RMod_{\LMod_{A}}((\Pr^{St}_{k})_{\Mod_k/})\times_{(\Pr^{St}_{k})_{\Mod_k/}}\{(\C,E)\}\Big)^{\simeq}$$
whose corresponding formal $\E_2$-moduli problem is denoted by $\mathrm{SimDef}_{(\C,E)}^\wedge$.

This functor is strongly related to $\mathrm{ObjDef}_E$ and $\mathrm{CatDef}_\C$ we defined previously. At first, the projection $(\Pr^{St}_{k})_{/\Mod_k} \rightarrow \Pr^{St}_{k}$ sending a pair $(\C,E)$ to $\C$ defines a morphism $\mathrm{SimDef}_{(\C,E)}\rightarrow \mathrm{CatDef}_\C$. Another morphism $\mathrm{ObjDef}_{E}^{(2)}\rightarrow \mathrm{SimDef}_{(\C,E)}$ sends an $A$-linear deformation $\E_A\in \C\otimes_k A$ to the pair $(\C\otimes_k A,E_A)$ where $\mathrm{ObjDef}_{E}^{(2)}$ is the restriction of $\mathrm{ObjDef}_{E}$ to $\Alg_{k}^{(2),\aug}$.

In \cite[Prop. 4.3]{BKP}, the authors show there are fiber sequences
\[\begin{tikzcd}
	{\mathrm{ObjDef}_{E}^{(2)}} & {\mathrm{SimDef}_{(\C,E)}} & {\mathrm{CatDef}_\C} \\
	{\mathrm{ObjDef}_{E}^{(2),\wedge}} & {\mathrm{SimDef}_{(\C,E)}^\wedge} & {\mathrm{CatDef}_{\C}^{\wedge}}
	\arrow[from=1-1, to=1-2]
	\arrow[from=1-1, to=2-1]
	\arrow[from=1-2, to=1-3]
	\arrow[from=1-2, to=2-2]
	\arrow[from=1-3, to=2-3]
	\arrow[from=2-1, to=2-2]
	\arrow[from=2-2, to=2-3]
\end{tikzcd}\]
where the top fiber sequence is in $\fun(\Alg^{(2),\Art}_k,\widehat{s\Set})$ and the bottom fiber sequence is in $\Moduli_{k}^{(2)}$. This also means we can get a fiber sequence of non-unital $\E_2$-algebras,
$$\T_{\mathrm{ObjDef}_{E}^{(2), \wedge}}[-2]=\mathrm{End}_{\C}(E)[-1]\rightarrow \T_{\mathrm{SimDef}_{(\C,E)}^{\wedge}}[-2]\rightarrow \T_{\mathrm{CatDef}_{\C}^{\wedge}}[-2]=\mathrm{End}_{\Z_{\E_0}(\C)}(1) $$
which implies 
$$\mathrm{SimDef}_{(\C,E)}^{\wedge}\simeq \Psi\big(k\oplus \mathrm{fib}(\mathrm{End}_{\Z_{\E_0}(\C)}(1)\rightarrow \mathrm{End}_{\C}(E))\big)$$
where $\Psi:\Alg_{k}^{(2),\aug}\rightarrow \Moduli_{k}^{(2)}$.
\\

So we can say $\E_0$-monoidal deformation problem for $\C\in \Pr_{k}^{St}$ is controlled by the non-unital $\E_2$-algebra $\mathrm{fib}(\mathrm{End}_{\Z_{\E_0}(\C)}(1)\rightarrow \mathrm{End}_{\C}(E))$, if we regard $E$ as the $\E_0$-unit of $\C$. As a generalization, we will see later that the $\E_n$-monoidal deformation problem of an $\E_n$-monoidal category $\C$ in $\Pr^{St}_k$ is controlled by the non-unital $\E_{n+2}$-algebra $\mathrm{fib}(\mathrm{End}_{\Z_{\E_n}(\C)}(1)\rightarrow \mathrm{End}_{\C}(1))$ i.e.
$$\E_n\mathrm{CatDef}_{\C}^\wedge\simeq \Map_{\Alg_{k}^{(n+2),\aug}}\Big(\mathscr{D}^{(n+2)}(-),k\oplus\mathrm{fib}\big(\mathrm{End}_{\Z_{\E_n}(\C)}(1)\rightarrow \mathrm{End}_{\C}(1)\big)\Big)$$
where $\Z_{\E_n}(\C)$ is the $\E_n$-center of $\C$. This statement appears in \cite[Variation 9.20]{Lur10}.

\section{Deformations of $\E_n$-Monoidal Categories}
Now we suppose $\C$ is an $\E_n$-monoidal stable $k$-linear $\infty$-category i.e. an $\infty$-category in $\Alg_{\E_n}(\Pr^{St}_k)$. Let $A\in \Alg_{k}^{(n+2),\aug}$ be an augmented $\E_{n+2}$-algebra and then $\LMod_A$ will be an $\E_{n+1}$-monoidal category in $\Pr^{St}_k$. So that we can regard $\RMod_{\LMod_A}(\Pr^{St}_k)$ as an $\E_n$-monoidal category.

A \textit{stable right $A$-linear $\E_n$-monoidal deformation} of $\C$ is an $\infty$-category $\C_A$ in $\Alg_{\E_n}(\RMod_{\LMod_A}(\Pr^{St}_k))$ such that $\C_A\otimes_A k\simeq \C$ in $\Alg_{\E_n}(\Pr^{St}_k)$. Let $\E_n\mathbf{RCat}_{k}$ be the $\infty$-category consisting of pairs $(A,\D_A)$ such that $A$ is an $\E_{n+2}$-algebra and $\D_A$ is a stable right $A$-linear $\E_n$-monoidal category. The coCartesian fibration $\E_n\mathbf{RCat}_{k}\rightarrow \Alg_{k}^{(n+2)}$ gives a left fibration $\E_n\mathbf{RCat}_{k}^{\text{coCart}}$ spanned by coCartesian morphisms. The \textit{$\infty$-category of $\E_n$-monoidal deformations of $\C$} is 
$$\mathrm{Deform}[\C]:=\E_n\mathbf{RCat}_{k}^{\text{coCart}}/(k,\C)$$
which is related to a functor
$$\E_n\mathrm{CatDef}_{\C}:\Alg_{k}^{(n+2),\aug}\rightarrow \widehat{s\mathbf{Set}},\ A\mapsto \big(\Alg_{\E_n}(\RMod_{\LMod_A}(\Pr^{St}_k))\times_{\Alg_{\E_n}(\Pr^{St}_k)}\{\C\}\big)^{\simeq}$$

\begin{prop}\label{4.1}
Let $\C$ be an $\E_n$-monoidal stable $k$-linear $\infty$-category. Then for every pullback diagram
\[\begin{tikzcd}
	A & {A_0} \\
	{A_1} & {A_{01}}
	\arrow[from=1-1, to=1-2]
	\arrow[from=1-1, to=2-1]
	\arrow["\lrcorner"{anchor=center, pos=0.125}, draw=none, from=1-1, to=2-2]
	\arrow[from=1-2, to=2-2]
	\arrow[from=2-1, to=2-2]
\end{tikzcd}\]
in $\Alg_{k}^{(n+2),\mathrm{aug}}$, the induced functor
$$\E_n\mathrm{CatDef}_{\C}(A)\rightarrow \E_n\mathrm{CatDef}_{\C}(A_1)\times_{\E_n\mathrm{CatDef}_{\C}(A_{01})}\E_n\mathrm{CatDef}_{\C}(A_0)$$
is $0$-truncated.
\end{prop}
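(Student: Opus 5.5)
The plan follows the proof of Theorem \ref{3.1}, adding one step that passes from categories to $\E_n$-algebra objects, as was done for $\E_n\mathrm{AlgDef}_E$ in Section \ref{2.1}.

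\textbf{Step 1: the module categories.} By Prop. \ref{2.3}, applied with $\C=\Mod_k$, the functor
$$F:\LMod_A\rightarrow \LMod_{A_1}\times_{\LMod_{A_{01}}}\LMod_{A_0}$$
is fully faithful, with right adjoint $G$ and $G\circ F\simeq \id$. Since $A$ is now an $\E_{n+2}$-algebra, all functors in the square are $\E_{n+1}$-monoidal, and the fiber product is computed in $\Alg_{\E_{n+1}}(\Pr^{St}_k)$. Hence $F$ is an $\E_{n+1}$-monoidal fully faithful functor.

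\textbf{Step 2: a fully faithful functor on each deformation.} Let $\mathcal{E}$ be a right $A$-linear $\E_n$-monoidal deformation. As in Theorem \ref{3.1}, the functor
$$\mathcal{E}\rightarrow (\mathcal{E}\otimes_A A_1)\times_{(\mathcal{E}\otimes_A A_{01})}(\mathcal{E}\otimes_A A_0)$$
is fully faithful, since $(\id_{\mathcal{E}}\otimes G)\circ(\id_{\mathcal{E}}\otimes F)\simeq\id$. It is $\E_n$-monoidal, because the base change functors $-\otimes_A A_i$ are $\E_n$-monoidal on $\RMod_{\LMod_A}(\Pr^{St}_k)$.

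\textbf{Step 3: mapping spaces.} Take two deformations $\D,\mathcal{E}$ in $\Alg_{\E_n}(\RMod_{\LMod_A}(\Pr^{St}_k))$. I would show that
$$\fun^{L,\otimes}_A(\D,\mathcal{E})\rightarrow \fun^{L,\otimes}_{A_1}(\D_{A_1},\mathcal{E}_{A_1})\times_{\fun^{L,\otimes}_{A_{01}}(\D_{A_{01}},\mathcal{E}_{A_{01}})}\fun^{L,\otimes}_{A_0}(\D_{A_0},\mathcal{E}_{A_0})$$
is fully faithful. Here $\fun^{L,\otimes}$ denotes colimit-preserving $\E_n$-monoidal $A$-linear functors, and the identification of the target uses the base change adjunction $\fun^{L,\otimes}_A(\D,\mathcal{E}_{A_i})\simeq\fun^{L,\otimes}_{A_i}(\D_{A_i},\mathcal{E}_{A_i})$.

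Fully faithfulness follows because $\E_n$-monoidal functors $\D\rightarrow\mathcal{E}$ are $\E_n$-algebra maps, that is, sections of an $\infty$-operad map. Postcomposing with the fully faithful $\E_n$-monoidal functor of Step 2 is fully faithful on $\fun(\E_n^{\otimes},-)$, and the algebra conditions are conditions on objects. This is the same argument given in Section \ref{2.1} for $\Alg_{\E_n}(-)$.

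Restricting to equivalences (the $\mathbf{Iso}$ subcategories) keeps the map fully faithful. Therefore all path spaces of the comparison map
$$\E_n\mathrm{CatDef}_\C(A)\rightarrow \E_n\mathrm{CatDef}_\C(A_1)\times_{\E_n\mathrm{CatDef}_\C(A_{01})}\E_n\mathrm{CatDef}_\C(A_0)$$
are $(-1)$-truncated, and so the map itself is $0$-truncated.

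\textbf{Main obstacle.} The subtle point is Step 3, specifically identifying the mapping spaces of the fiber product of $\infty$-groupoids with the fiber product of the $\mathbf{Iso}$-spaces. This needs that
$$\Alg_{\E_n}(\RMod_{\LMod_{A}}(\Pr^{St}_k))\rightarrow \Alg_{\E_n}(\RMod_{\LMod_{A_i}}(\Pr^{St}_k))$$
is compatible with the monoidal base-change adjunctions. I would handle it by working with the coCartesian fibration $\E_n\mathbf{RCat}_k\rightarrow\Alg_k^{(n+2)}$, so that morphisms over $A\rightarrow A_i$ correspond to morphisms in the fiber over $A_i$ after base change.
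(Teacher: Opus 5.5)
Your proposal is correct and takes the same approach as the paper. The paper's proof simply reruns the argument of Theorem \ref{3.1}, replacing $\fun^{L}_A$ by the $\E_n$-monoidal functor categories $\fun_{\Alg_{\E_n}(\RMod_{\LMod_A}(\Pr^{St}_k))}$. Your Step 3 spells out exactly this replacement, using the same observation from Section \ref{2.1} that $\Alg_{\E_n}(-)$ preserves fully faithful functors.
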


\begin{proof}
The proof is the same as Theorem \ref{3.1}, but we need to notice $\fun^{L}_{A}=\fun_{\RMod_{\LMod_A}(\Pr^{St}_k)}$, so we should replace it by $\fun_{\Alg_{\E_n}(\RMod_{\LMod_A}(\Pr^{St}_k))}$.
\end{proof}

Therefore following \cite[Cor. 16.6.2.4]{SAG}, we know when restricted to $\Alg_{k}^{(n+2),\Art}$, $\E_n\mathrm{CatDef}_{\C}$ is a $2$-proximate formal $\E_{n+2}$-moduli problem in a larger universe. Its corresponding formal $\E_{n+2}$-moduli problem is denoted by $\E_n\mathrm{CatDef}_{\C}^{\wedge}$.
\\

If $\C$ is compactly generated (resp. tamely compactly generated), we can also define a subfunctor $\E_n\mathrm{CatDef}_{\C}^c$ (resp. $\E_n\mathrm{CatDef}_{\C}^{\text{tcg}}$) consisting of compactly generated (resp. tamely compactly generated) $\E_n$-monoidal deformations. But to make sure the monoidal structure behaves well with the compactly generated property, we should also suppose $\C$ is \textit{rigid monoidal}.

\begin{definition}
    \normalfont
A monoidal stable $\infty$-category $\C$ is \textit{rigid}, if it satisfies the following conditions:
\begin{itemize}
    \item[(1).] The unit object $1_\C$ is compact.
    \item[(2).] The right adjoint functor of the multiplication functor $\mathrm{mult}:\C\otimes \C\rightarrow \C$, denoted by $\mathrm{mult}^{R}$, preserves filtered colimits.
    \item[(3).] $\mathrm{mult}^R:\C\rightarrow \C\otimes \C$ is a functor of $\C$-bimodule categories. 
\end{itemize}
\end{definition}

\begin{prop}
Let $\C$ be a compactly generated rigid monoidal stable $\infty$-category. If $C$ and $D$ are compact objects in $\C$, then $C\otimes D$ will also be compact.
\end{prop}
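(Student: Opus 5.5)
Compactness of $C\otimes D$ means that $\Map_\C(C\otimes D,-)$ commutes with filtered colimits. The plan is to rewrite this functor using the adjunction $(\mathrm{mult},\mathrm{mult}^R)$ from the definition of rigidity. The pair $(C,D)$ gives the object $C\boxtimes D\in\C\otimes\C$, and by construction $\mathrm{mult}(C\boxtimes D)\simeq C\otimes D$. Adjunction then gives, naturally in $X\in\C$,
$$\Map_\C(C\otimes D,X)\simeq \Map_{\C\otimes\C}(C\boxtimes D,\mathrm{mult}^R(X)).$$
So it suffices to show two things: that $C\boxtimes D$ is compact in $\C\otimes\C$, and that $\mathrm{mult}^R$ preserves filtered colimits. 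The second is exactly condition (2) of rigidity. Hence $X\mapsto\Map_{\C\otimes\C}(C\boxtimes D,\mathrm{mult}^R(X))$ is a composite of two functors preserving filtered colimits, and $C\otimes D$ is compact.

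The main step is the compactness of $C\boxtimes D$. Here I would invoke the standard fact (\cite[Prop. 5.5.3.8]{HTT}, \cite[Sec. 4.8.1]{HA}) that for compactly generated $\C=\mathrm{Ind}(\C^c)$ and $\D=\mathrm{Ind}(\D^c)$ in $\Pr^{St}_k$, the Lurie tensor product is again compactly generated: $\C\otimes\D\simeq \mathrm{Ind}(\C^c\otimes\D^c)$, with compact generators the objects $C\boxtimes D$ for $C\in\C^c$, $D\in\D^c$.

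If one prefers a direct argument, it goes as follows. The functor $\C\times\C\to\C\otimes\C$ preserves colimits separately in each variable. For fixed compact $C$, the functor $\Map(C\boxtimes D,-)$ can be identified by adjunction with $\Map_\C(D,\Phi_C(-))$, where $\Phi_C:\C\otimes\C\to\C$ is the right adjoint of $C\boxtimes-$. One then checks that $\Phi_C$ preserves filtered colimits. This reduces, via $\C\otimes\C\simeq\fun^{L}(\C^{\vee},\C)$ or via $\mathrm{Ind}$-presentations, to the compactness of $C$ itself.

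I expect this identification of compact objects in $\C\otimes\C$ to be the only nontrivial input. Everything else is formal.

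One further remark. Conditions (1) and (3) of rigidity are not needed for this statement, and neither is stability, beyond the fact that in the stable $k$-linear setting compactness may be tested using mapping spectra or mapping complexes instead of mapping spaces.
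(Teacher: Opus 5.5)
Your proof is correct and is essentially the paper's argument: the paper also rewrites $\Map_\C(C\otimes D,-)$ as $\Map_{\C\otimes\C}((C,D),\mathrm{mult}^R(-))$. It then uses condition (2) to move the filtered colimit through $\mathrm{mult}^R$, and finishes with compactness of $(C,D)$ in $\C\otimes\C$. The one difference is that you justify this last compactness, via $\C\otimes\C\simeq\mathrm{Ind}(\C^c\otimes\C^c)$ or by identifying the right adjoint of $C\boxtimes-$ with evaluation at $C$, whereas the paper simply asserts it.
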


\begin{proof}
Let $C$ and $D$ be compact objects in $\C$. Then given a filtered colimits $\colim A_i$ in $\C$, we have equivalences
\begin{align*}
    \Map_{\C}(C\otimes D,\colim A_i)&\simeq \Map_{\C\otimes \C}((C,D),\mathrm{mult}^R(\colim A_i))\\
    &\simeq  \Map_{\C\otimes \C}((C,D),\colim\ \mathrm{mult}^R(A_i))\\
    &\simeq \colim \Map_{\C\otimes \C}((C,D),\mathrm{mult}^R(A_i)),\ \text{since $(C,D)$ is compact in $\C\otimes \C$}\\
    &\simeq \colim \Map_\C(C\otimes D,A_i)
\end{align*}
which implies $C\otimes D$ is compact.
\end{proof}

In fact, it is shown in \cite[Lem. 9.1.5, Chap. 1]{GR19} that for a compactly generated monoidal stable $\infty$-category $\C$, it is rigid if and only if 
\begin{itemize}
    \item[(1).] $1_\C$ is compact;
    \item[(2).] the tensor product functor sends $\C^c\times \C^c$ to $\C^c$;
    \item[(3).] every compact object in $\C$ has both a left and a right dual.
\end{itemize}

\begin{example}
    \normalfont
	A derived stack $X$ is \textit{perfect} if
\begin{itemize}
    \item[(1).] the diagonal morphism $\Delta:X\rightarrow X\times X$ is affine,
    \item[(2).] the functor $\mathrm{Ind}(\mathbf{Perf}(X))\rightarrow \QCoh(X)$ is an equivalence.
\end{itemize}
For a perfect stack $X$, e.g. the quotient stack $[Y/G]$ where $Y$ is a quasi-projective derived scheme with a linear action from an affine algebraic group $G$ \cite[Cor. 3.22]{BFN10}, $\QCoh(X)$ is rigid. From \cite[Prop. 3.9]{BFN10}, we know compact and dualizable objects coincide in $\QCoh(X)$, so the unit $\mathcal{O}_X$ is compact and every compact object is dualizable.
\end{example}

\begin{theorem}
Let $A$ be an Artin $\E_n$-algebra over $k$ for $n\geq 3$. Suppose $\C$ is a stable right $A$-linear $\E_1$-monoidal category i.e. in $\Alg_{\E_1}(\RMod_{\LMod_A}(\Pr^{St}_k))$. Then $\C$ is rigid if and only if $\C\otimes_A k$ is rigid.
\end{theorem}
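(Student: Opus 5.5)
We plan to prove both directions by working with the three conditions in the definition of rigidity (compact unit, $\mathrm{mult}^R$ preserves filtered colimits, $\mathrm{mult}^R$ is a bimodule functor), interpreted relative to $\LMod_A$ for $\C$, and checking that each is compatible with the base change $-\otimes_A k:=-\otimes_{\LMod_A}\LMod_k$. Write $\C_0=\C\otimes_A k$ and $q:\C\rightarrow \C_0$ for the induced monoidal functor. Since $n\geq 3$, $\LMod_A$ is at least $\E_2$-monoidal, so $\RMod_{\LMod_A}(\Pr^{St}_k)$ is monoidal and $\Alg_{\E_1}$ of it makes sense.

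\emph{Rigid $\Rightarrow$ base change rigid.} This direction is formal. Base change $-\otimes_{\LMod_A}\LMod_k$ is a symmetric monoidal functor on $A$-linear presentable categories, so it sends $\mathrm{mult}:\C\otimes_A\C\rightarrow\C$ to $\mathrm{mult}_0$. Moreover, the adjunction $\mathrm{mult}\dashv\mathrm{mult}^R$ is an adjunction internal to the $(\infty,2)$-category of $A$-linear categories with colimit-preserving functors, because $\mathrm{mult}^R$ preserves colimits and is $A$-linear and bimodule-linear by (2) and (3). Any 2-functor preserves internal adjunctions, so $\mathrm{mult}_0^R\simeq\mathrm{mult}^R\otimes_A k$, which is again colimit-preserving and bimodule-linear. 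The unit is $q(1_\C)$. Since $q$ has a colimit-preserving right adjoint (the forgetful functor $\C_0\rightarrow\C$), $q$ preserves compact objects, so $1_{\C_0}$ is compact.

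\emph{Base change rigid $\Rightarrow$ rigid.} This is the real content, and here the Artin hypothesis is used. We will use nilpotence: $A$ is obtained from $k$ by a finite sequence of square-zero extensions $A_{i+1}\rightarrow A_i$ with fibers $k[m]$, $m\geq 0$ (as in \cite[Sec. 15]{SAG}). Each such extension is a pullback $A_{i+1}\simeq A_i\times_{k\oplus k[m+1]}k$. By the fully faithfulness of $\C\rightarrow(\C\otimes_A A_1)\times_{\C\otimes_A A_{01}}(\C\otimes_A A_0)$ established in the proof of Theorem \ref{3.1}, we can detect properties of $\C$ from its base changes. We would proceed in three steps.

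First, for compactness of $1_\C$, we use the equivalence $\Map_\C(1,-)\simeq$ limit over the finite diagram of $\Map_{\C\otimes_A A_i}(1,-)$. Concretely, $M\simeq M\otimes_A A$, and $A$ is a finite iterated extension of $k$ in $\RMod_A$, so $M$ is a finite limit of objects of the form $M\otimes_A k[m]$. Hence $\Map_\C(1,M)$ is a finite limit of shifts of $\Map_{\C_0}(1_{\C_0},q(M))$. This commutes with filtered colimits in $M$ because finite limits commute with filtered colimits in stable categories and $q$ preserves colimits.

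Second, $\mathrm{mult}^R$ preserves filtered colimits by the same dévissage: write $\mathrm{mult}^R(M)$ as a finite limit of the $\mathrm{mult}^R(M\otimes_A k[m])$. The forgetful functor from $\C_0$ satisfies $\mathrm{mult}^R\circ\mathrm{forget}\simeq\mathrm{forget}\circ\mathrm{mult}_0^R$ (adjoint to $q\circ\mathrm{mult}\simeq \mathrm{mult}_0\circ(q\otimes q)$), so each term commutes with filtered colimits. We need $M\otimes_A k[m]$ to lie in the image of the forgetful functor from $\C_0$, which holds since it is a $k$-module object. Third, the lax bimodule structure maps $C\otimes\mathrm{mult}^R(M)\rightarrow\mathrm{mult}^R(C\otimes M)$ are equivalences: both sides are exact and colimit-preserving in $C$ and $M$, so by the same finite dévissage in $M$ (and in $C$) we reduce to $\C_0$-module objects, where the claim is rigidity of $\C_0$.

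The main obstacle is the dévissage itself. We must make precise that every object of an $A$-linear category is a finite limit of objects pulled back from $k$-linear ones, using connectivity of the square-zero fibers, and that $\mathrm{mult}^R$ is compatible with this. We would first try to cite Lurie's analogous statement for compact generation (\cite[Sec. 16.6]{SAG}), and otherwise argue along the Postnikov filtration of $A$ directly.
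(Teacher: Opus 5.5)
Your proposal is correct in outline, but it takes a different route from the paper.

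\textbf{The paper's route (descent).} Since $A$ is Artin, the paper argues that $A\to k$ is a universal descent morphism, using Mathew's results. It follows that $\C\simeq\Tot(\C\otimes_{\LMod_A}\LMod_k^{\otimes(\bullet+1)})$ and $\Pr^{St}_A\simeq\Tot(\Pr^{St}_{k\otimes_A\cdots\otimes_A k})$. Rigidity is then checked levelwise. Each functor $\Pr^{St}_A\to\Pr^{St}_{k\otimes_A\cdots\otimes_A k}$ factors through $\Pr^{St}_k$, so everything reduces to $\C\otimes_A k$. (The forward direction enters only implicitly, as stability of rigidity under base change.)

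\textbf{Your route (finite d\'evissage).} $A$ is a finite iterated extension of shifts of $k$ in $\LMod_A$. Hence every $M\in\C$ is, naturally in $M$, a finite iterated extension of the objects $M\otimes_A k[m]\simeq\mathrm{forget}(q(M))[m]$. You then verify the three axioms one at a time:
\begin{enumerate}
\item compactness of $1_\C$;
\item filtered-colimit preservation of $\mathrm{mult}^R$, via the mate $\mathrm{mult}^R\circ\mathrm{forget}\simeq\mathrm{forget}\circ\mathrm{mult}_0^R$;
\item strictness of the lax bimodule maps.
\end{enumerate}

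\textbf{Comparison.} The paper's argument is shorter and conceptually uniform. As written, however, it does not explain why conditions involving compact objects and filtered colimits survive an infinite limit such as $\Tot$, which in general does not commute with filtered colimits. The natural way to fill this is descendability of $A\to k$ in Mathew's sense, so that the Amitsur complex is pro-constant. For Artin $A$, descendability amounts to $A$ lying in the thick subcategory of $\LMod_A$ generated by $k$, which is precisely your d\'evissage. Your route is therefore more elementary and self-contained. It also makes clear that what is really used is the finiteness of the tower of elementary square-zero extensions. Connectivity matters only through the existence of that tower, and the fully faithful pullback functor from Theorem~\ref{3.1} that you invoke is not actually needed.

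\textbf{Points to tidy.}
\begin{itemize}
\item The base change $-\otimes_{\LMod_A}\LMod_k$ is $\E_{n-2}$-monoidal, not symmetric monoidal. This suffices for identifying $(\C\otimes_A\C)\otimes_A k\simeq\C_0\otimes_k\C_0$.
\item In step~3, say explicitly that the identifications $C\otimes\mathrm{forget}(-)\simeq\mathrm{forget}(q(C)\otimes-)$ and $\mathrm{mult}^R\circ\mathrm{forget}\simeq\mathrm{forget}\circ\mathrm{mult}_0^R$ are mates of $\C$-bimodule equivalences. Then the comparison map at $M\otimes_A k$ is $\mathrm{forget}$ applied to the corresponding comparison map for $\C_0$.
\item For the forward direction, note that bimodule-linearity of $\mathrm{mult}^R$ implies $A$-linearity. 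This holds because the $\LMod_A$-action on $\C$ and on $\C\otimes_A\C$ factors through the monoidal functor $\LMod_A\to\C$, and it is what makes the adjunction internal to $A$-linear categories.
\end{itemize}
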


\begin{proof}
The idea to prove this theorem is to use the theory of \textit{universal descent morphisms} developed in \cite[Sec. D.3]{SAG} and \cite{Mat16}. Because our $A$ is an Artin algebra, considering the factorization $A\rightarrow H^0(A)\rightarrow k$ and using \cite[Prop. 3.24, 3.34, 3.35]{Mat16}, the morphism $A\rightarrow k$ is a universal descent morphism. So that a stable right $A$-linear $\infty$-category $\C$ will be equivalent to $\mathrm{Tot}(\C\otimes_{\LMod_{A}}\LMod_{k}^{\otimes(\bullet +1)})$ by \cite[Thm. D.3.5.2, Remark D.3.5.3]{SAG}. This means $\mathbf{Pr}_{A}^{St}$ is equivalent to the totalization of the cosimplicial category $\mathbf{Pr}_{k^{\otimes \bullet}}^{St}$ \cite[Theorem D.3.6.2]{SAG}. Then the rigid property of $\C$ can be reduced to that in $\mathbf{Pr}_{k \otimes_A k\otimes_A\cdots\otimes_A k}^{St}$. But the map $\mathbf{Pr}_{A}^{St}\rightarrow\mathbf{Pr}_{k \otimes_A k\otimes_A\cdots\otimes_A k}^{St}$ factors through $\mathbf{Pr}_{A}^{St}\rightarrow\mathbf{Pr}_{k}^{St}$, so it can be reduced to $\mathbf{Pr}_{k}^{St}$ i.e. $\C\otimes_A k$ as well.
\end{proof}

The theorem above means for a rigid $\E_n$-monoidal stable $k$-linear $\infty$-category $\C$, every $\E_n$-monoidal deformation of it will also be rigid.

\begin{theorem}\label{mainA}
Let $\C$ be a compactly generated $\E_n$-monoidal stable $k$-linear $\infty$-category. Then $\E_n\mathrm{CatDef}_{\C}^{c}$ is a $2$-proximate formal $\E_{n+2}$-moduli problem, whose corresponding formal moduli problem, denoted by $\E_n\mathrm{CatDef}_{\C}^{\wedge}$, is equivalent to
$$\Map_{\Alg_{k}^{(n+2),\mathrm{aug}}}\Big(\mathscr{D}^{(n+2)}(-),k\oplus\mathrm{fib}\big(\mathrm{End}_{\Z_{\E_n}(\C)}(1)\rightarrow \mathrm{End}_{\C}(1)\big)\Big)$$
where $\Z_{\E_n}(\C)$ is the $\E_n$-center of $\C$. If $\C$ is rigid and tamely compactly generated, then $\E_n\mathrm{CatDef}_{\C}^c$ will be a $1$-proximate formal $\E_{n+2}$-moduli problem. Moreover, if there is a collection of unobstructible objects generating $\C$ under small colimits, then $\E_n\mathrm{CatDef}_{\C}^c$ will itself be a formal $\E_{n+2}$-moduli problem.
\end{theorem}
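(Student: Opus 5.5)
The proof splits into three parts, one for each assertion. For the $2$-proximate statement I would use Prop.~\ref{4.1}: the comparison map $\E_n\mathrm{CatDef}_{\C}(A)\rightarrow \E_n\mathrm{CatDef}_{\C}(A_1)\times_{\E_n\mathrm{CatDef}_{\C}(A_{01})}\E_n\mathrm{CatDef}_{\C}(A_0)$ is $0$-truncated. The inclusion $\E_n\mathrm{CatDef}_{\C}^c\subseteq\E_n\mathrm{CatDef}_{\C}$ is $(-1)$-truncated, so by the characterization of $m$-proximate problems (the theorem following \cite[Thm. 16.4.2.1]{SAG}, item (2)) the subfunctor $\E_n\mathrm{CatDef}^c_\C$ is $2$-proximate. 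Both have the same localization $\E_n\mathrm{CatDef}_{\C}^{\wedge}$, and $\E_n\mathrm{CatDef}^c_\C$ takes essentially small values on Artin algebras, exactly as in the plain case \cite[Prop. 16.6.6.2]{SAG}.

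To identify the localization, I would compute the tangent complex and invoke \cite[Prop. 12.2.2.6]{SAG}, which says that formal moduli problems are determined by their tangent complexes together with the non-unital $\E_{n+2}$-structure. I would mimic the simultaneous-deformation argument of \cite[Prop. 4.3]{BKP} one categorical level up. An $A$-linear $\E_n$-monoidal deformation of $\C$ is a deformation of $\C$ as an $\E_n$-algebra object in the $\E_{n+1}$-monoidal $\infty$-category $\RMod_{\LMod_A}(\Pr^{St}_k)$, with $\LMod_A$ viewed as an $\E_{n+1}$-algebra in $\Pr^{St}_k$. Formally this is the categorified analogue of $\E_n\mathrm{AlgDef}$ in Theorem~\ref{2.6}, with $\Mod_k$ replaced by $\Pr^{St}_k$. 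The categorified version of Francis' fiber sequence \cite[Cor. 4.38]{Fra13} (the one quoted after Theorem~\ref{2.6}) should give a fiber sequence of non-unital $\E_{n+2}$-algebras
$$\mathrm{End}_{\C}(1)[-1]\rightarrow \T_{\E_n\mathrm{CatDef}^\wedge_\C}[-n-2]\rightarrow \mathrm{End}_{\Z_{\E_n}(\C)}(1).$$
Here the right term is $\Omega$ of the $\E_n$-Hochschild object $\Z_{\E_n}(\C)$, i.e.\ the endomorphisms of its unit, and the left term comes from the fact that the unit $1\in\C$ is part of the deformed data, as with $E$ for $\E_0$. This identifies $\T[-n-2]$ with $\mathrm{fib}(\mathrm{End}_{\Z_{\E_n}(\C)}(1)\rightarrow\mathrm{End}_\C(1))$. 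As a consistency check, the case $n=0$ recovers $\mathrm{SimDef}$ with $E=1$. Concretely, I would compute $\E_n\mathrm{CatDef}^\wedge_\C(k\oplus k[m])$ as the space of $\E_n$-monoidal structures on $\C\otimes_k(k\oplus k[m])$ restricting to $\C$. A trivial square-zero deformation of this kind should be classified by $\E_n$-derivations, i.e.\ by maps into $\Z_{\E_n}(\C)$ shifted, modulo those fixing the unit. Matching the non-unital $\E_{n+2}$-structure, and not just the underlying module, is the \textbf{main obstacle}. To handle it I would try to realize the functor globally as $\Map$ into the $\E_{n+1}$-monoidal category $\Z_{\E_n}(\C)$, using the universal property of the center: $A$-linear structures correspond to $\E_{n+1}$-maps $\LMod_A\rightarrow\Z_{\E_n}(\C)$. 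Passing to endomorphisms of units, and using Koszul duality $\LMod_A\leftrightarrow A$, this yields an $\E_{n+2}$-map $\mathscr{D}^{(n+2)}(A)\rightarrow k\oplus\mathrm{End}_{\Z_{\E_n}(\C)}(1)$ compatibly with the augmentation to $\mathrm{End}_\C(1)$.

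For the refinements, assume $\C$ rigid and tamely compactly generated. By the preceding theorem every deformation over an Artin algebra is again rigid, so compact objects are closed under $\otimes$ and dualizable. Lurie's argument \cite[Prop. 16.6.9.1]{SAG} then applies to the underlying plain deformations. The monoidal structure on a glued category $\C_{A_1}\times_{\C_{A_{01}}}\C_{A_0}$ restricts to the compactly generated subcategory because tensor products of compacts remain compact. So the comparison map becomes $(-1)$-truncated, and $\E_n\mathrm{CatDef}^c_\C$ is $1$-proximate. Finally, with unobstructible generators, \cite[Cor. 16.6.10.3]{SAG} shows that the underlying plain gluing is essentially surjective, and the $\E_n$-structure descends along the fully faithful functor. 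Hence the comparison map is an equivalence and $\E_n\mathrm{CatDef}^c_\C$ is a formal $\E_{n+2}$-moduli problem.
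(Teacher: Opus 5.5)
Your treatment of the proximity assertions agrees with the paper. You get $2$-proximality from Prop.~\ref{4.1} together with the $(-1)$-truncated inclusion of compactly generated deformations. You get the rigid and unobstructible refinements by rerunning \cite[Prop. 16.6.9.1, Thm. 16.6.10.2]{SAG}, using that rigidity persists under deformation, so the compactly generated part of $\D_{A_1}\times_{\D_{A_{01}}}\D_{A_0}$ is closed under $\otimes$.

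The gap is the identification of $\E_n\mathrm{CatDef}^\wedge_\C$, which your proposal asserts rather than proves. The ``categorified Francis sequence'' cannot simply be quoted. \cite[Cor. 4.38]{Fra13} is a statement about $\E_n$-algebras in a stable setting (it involves $E[-1]$), whereas $\C$ is an $\E_n$-algebra in $\Pr^{St}_k$, which is not stable. That the correct terms are the looped objects $\End_\C(1)[-1]$ and $\End_{\Z_{\E_n}(\C)}(1)$ is exactly what needs an argument.

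Your fallback does not supply that argument. An $\E_{n+1}$-monoidal functor $\LMod_A\to\Z_{\E_n}(\C)$ classifies an $A$-linear structure on $\C$ itself, which is contravariant in $A$. It does not classify a deformation $\C_A$ with $\C_A\otimes_A k\simeq\C$. Passing to endomorphisms of units then gives a map out of $A$, not out of $\mathscr{D}^{(n+2)}(A)$. A map $\mathscr{D}^{(n+2)}(A)\to\End_{\Z_{\E_n}(\C)}(1)$ would have to be extracted from the Koszul-dual action on the central fiber $\C_A\otimes_{\LMod_A}\LMod_k$. Even then you would still need:
\begin{itemize}
\item the null-homotopy into $\End_\C(1)$ coming from the deformed unit;
\item above all, a proof that the resulting transformation is an equivalence on tangent complexes.
\end{itemize}
Your sketched tangent computation does not give the second point either. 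It only sees deformations with underlying category $\C\otimes_k(k\oplus k[m])$, so it ignores nontrivial plain deformations. Moreover, since the naive problem is only $2$-proximate, its values at $k\oplus k[m]$ need not agree with those of $\E_n\mathrm{CatDef}^\wedge_\C$.

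The missing idea is the paper's Morita-theoretic reduction. Iterating $\LMod$ turns $\C$ into the pair $(\LMod^n_\C,\LMod^{n-1}_\C)$ in $(n+1)\Pr^L_k$, so the simultaneous deformation sits $n$ levels up, not one. One then shows that
$\mathrm{ObjDef}^{(n+2)}_{\LMod^{n-1}_\C\in\LMod^n_\C}\xrightarrow{\End^n}\E_n\mathrm{CatDef}_\C\xrightarrow{\LMod^n}\mathrm{CatDef}_{\LMod^n_\C}$
is a fiber sequence:
\begin{itemize}
\item $\LMod^n$ is fully faithful \cite[Cor. 5.1.2.6]{HA}, so Morita theory \cite[Thm. 7.1.2.1]{HA} identifies the fiber of $\Alg_{\E_n}(\mathcal{E})^{\simeq}\to(\LMod^n_{\mathcal{E}})^{\simeq}$ with the groupoid of (iterated) compact generators;
\item every deformation of the unit over an Artin $A$ is such a generator, because $A\to k$ is a universal descent morphism.
\end{itemize}
Comparing with $\mathrm{ObjDef}\to\mathrm{SimDef}_{(\LMod^n_\C,\LMod^{n-1}_\C)}\to\mathrm{CatDef}_{\LMod^n_\C}$ gives $\E_n\mathrm{CatDef}^\wedge_\C\simeq\mathrm{SimDef}^\wedge_{(\LMod^n_\C,\LMod^{n-1}_\C)}$. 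This is controlled by $\mathrm{fib}\big(\End^{n+2}(\LMod^n_\C)\to\End^{n+1}_{\LMod^n_\C}(\LMod^{n-1}_\C)\big)$. Prop.~\ref{center}, via Francis' $\Z_{\E_{n-1}}(\LMod_B)\simeq\Mod^{\E_n}_B$, identifies the first term with $\End_{\Z_{\E_n}(\C)}(1)$. The second term is $\End_\C(1)$ because $\LMod^{n-1}_\C$ is the unit. On this route the non-unital $\E_{n+2}$-structure is inherited from the higher-categorical object and category deformation results rather than matched by hand, which is precisely the step you left open.
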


\begin{proof}
The proof for that $\E_n\mathrm{CatDef}_{\C}^{c}$ sends Artin algebras to essentially small simplicial sets is the same as \cite[Prop. 16.6.6.2]{SAG} using Prop. \ref{4.1}. And then $\E_n\mathrm{CatDef}_{\C}^{c}$ is a $2$-proximate formal $\E_{n+2}$-moduli problem because $\E_n\mathrm{CatDef}_{\C}$ is $2$-proximate.

Now suppose $\C$ is rigid and tamely compactly generated. Following Lurie's proof of \cite[Prop. 16.6.9.1]{SAG}, for every Artin $\E_{n+2}$-algebra $R$ over $k$, we let $\chi(R)$ be a subcategory of $\Alg_{\E_n}(\RMod_{\LMod_R}(\Pr^{St}_k))$ consisting of rigid tamely compactly generated $\E_n$-monoidal stable right $R$-linear $\infty$-categories and compact $R$-linear functors which means they preserve compact objects. This defines a functor $\Alg_{k}^{(k+2), \Art}\rightarrow s\Set$.

Given a pullback diagram 
\[\begin{tikzcd}
	A & {A_0} \\
	{A_1} & {A_{01}}
	\arrow[from=1-1, to=1-2]
	\arrow[from=1-1, to=2-1]
	\arrow["\lrcorner"{anchor=center, pos=0.125}, draw=none, from=1-1, to=2-2]
	\arrow[from=1-2, to=2-2]
	\arrow[from=2-1, to=2-2]
\end{tikzcd}\]
in $\Alg_{k}^{(n+2),\Art}$ such that $H^0(A_1)\rightarrow H^0(A_{01})\leftarrow H^0(A_0)$ are surjective maps, the induced functor
$$F:\chi(A)\rightarrow \chi(A_1)\times_{\chi(A_{01})}\chi(A_0)$$
admits the right adjoint functor $G$ which sends the triple $(\D_{A_{1}},\D_{A_0},\simeq_{\D_{A_{01}}})$ to the full subcategory of $\D_{A_{1}}\times_{\D_{A_{01}}}\D_{A_0}$ generated by $\D_{A_{1}}^c\times_{\D_{A_{01}}^c}\D_{A_0}^c$ under small colimits and tensor products as an $\E_n$-monoidal category. Because our $\D$'s are rigid here, this subcategory will just be generated by $\D_{A_{1}}^c\times_{\D_{A_{01}}^c}\D_{A_0}^c$ under small colimits. Then \cite[Prop. 16.6.8.2]{SAG} shows $\mathrm{id}_{\chi(A)}\rightarrow G\circ F$ will be an equivalence. In our case here, it is actually an $\E_n$-monoidal equivalence which means $F$ is a fully faithful functor. In particular, 
$$\chi(A)^\simeq\rightarrow \chi(A_1)^\simeq\times_{\chi(A_{01})^\simeq}\chi(A_0)^\simeq$$
is $(-1)$-truncated and considering deformations of an $\E_n$-monoidal stable $k$-linear $\infty$-category $\C\in \chi(k)$, we conclude 
$$\E_n\mathrm{CatDef}_{\C}^{\mathrm{tcg}}(A)\rightarrow \E_n\mathrm{CatDef}_{\C}^{\mathrm{tcg}}(A_1)\times_{\E_n\mathrm{CatDef}_{\C}^{\mathrm{tcg}}(A_{01})} \E_n\mathrm{CatDef}_{\C}^{\mathrm{tcg}}(A_0)$$
is also $(-1)$-truncated.

Next from \cite[Prop. 16.6.9.2]{SAG}, in this case we have $\E_n\mathrm{CatDef}_{\C}^{\mathrm{tcg}}=\E_n\mathrm{CatDef}_{\C}^c$, so that $\E_n\mathrm{CatDef}_{\C}^c$ is a $1$-proximate formal $\E_{n+2}$-moduli problem if $\C$ is a rigid tamely compactly generated $\E_n$-monoidal stable $k$-linear $\infty$-category.

If moreover $\C$ is generated by a collection of unobstructible objects, then given a triple $(\D_{A_{1}},\D_{A_0},\simeq_{\D_{A_{01}}})$ in $\E_n\mathrm{CatDef}_{\C}^{c}(A_1)\times_{\E_n\mathrm{CatDef}_{\C}^{c}(A_{01})} \E_n\mathrm{CatDef}_{\C}^{c}(A_0)$, the full subcategory of $\D_{A_{1}}\times_{\D_{A_{01}}}\D_{A_0}$ generated by $\D_{A_{1}}^c\times_{\D_{A_{01}}^c}\D_{A_{0}}^c$ under small colimits gives a preimage of $(\D_{A_{1}},\D_{A_0},\simeq_{\D_{A_{01}}})$ in $\E_n\mathrm{CatDef}_{\C}^{c}(A)$. That is because our $\D$'s are rigid. There is a natural $\E_n$-monoidal structure in $\D_{A_{1}}^c\times_{\D_{A_{01}}^c}\D_{A_{0}}^c$ and so is the category generated by it under small colimits. Then the proof follows \cite[Thm. 16.6.10.2]{SAG} completely. The existence of such preimage implies 
$$\E_n\mathrm{CatDef}_{\C}^{c}(A)\rightarrow \E_n\mathrm{CatDef}_{\C}^{c}(A_1)\times_{\E_n\mathrm{CatDef}_{\C}^{c}(A_{01})} \E_n\mathrm{CatDef}_{\C}^{c}(A_0)$$
is an equivalence in this case.

As for the corresponding formal moduli problem $\E_n\mathrm{CatDef}_{\C}^{\wedge}$, we compute it later after discussing $\E_n$-center and $\E_n$-Hochschild cohomology.
\end{proof}

\subsection{$\E_n$-Hochschild Cohomology}
Let $\C$ be a presentable symmetric monoidal $\infty$-category whose monoidal structure distributes over colimits e.g. $\mathbf{Sp}$ and $\Pr^{L}$. For a (coherent) $\infty$-operad $\mathcal{O}$ and $\mathcal{O}$-algebra $A\in \Alg_{\mathcal{O}}(\C)$, in \cite[Sec. 2]{Fra13} and \cite[Sec. 3.3.3]{HA} the authors construct $\Mod^{\mathcal{O}}_{A}(\C)$ the \textit{$\infty$-category of $\mathcal{O}$-module objects in $\C$ over $A$}, which is naturally tensored over $\C$. We are interested in the case where $\mathcal{O}=\mathbb{E}_{n}^{\otimes}$ is the $\infty$-operad of little $n$-cubes and $A\in \Alg_{\E_n}(\C)$ is an $\E_n$-algebra in $\C$.

\begin{definition}
\normalfont
The \textit{$\mathcal{O}$-center} for an $\mathcal{O}$-algebra $A\in \Alg_{\mathcal{O}}(\C)$ is defined to be $\Z_{\mathcal{O}}(A):=\hom_{\Mod_{A}^{\mathcal{O}}(\C)}(A,A)\in \C$ where $\hom$ is the internal Hom functor taking values in $\C$. If $\mathcal{O}=\mathbb{E}_{n}^{\otimes}$ is the $\E_n$-operad, then $\Z_{\E_n}(A)$ is the \textit{$\E_n$-center} of an $\E_n$-algebra $A$ in $\C$.
\end{definition}

The $\mathcal{O}$-center $\Z_{\mathcal{O}}(A)\in \C$ admits some universal property used to define the concept of centers and centralizers in \cite[Sec. 5.3]{HA}.

\begin{remark}
    \normalfont
Our notion of $\E_n$-center is called $\E_n$-Hochschild cohomology in \cite{BFN10} and \cite{Fra13}. If $A$ is an $\E_n$-monoidal category, $\Z_{\E_n}(A)$ will be a category not a complex. But the concept $\E_n$-Hochschild cohomology is also used in \cite{To14} for a chain complex associated to a category. So our terminology `center' is to distinguish the two cases.

Note that in \cite[Thm. 5.1.3.2]{HA}, Lurie shows there is an $\E_n$-monoidal structure on $\Mod_{A}^{\E_n}(\C)$. And in \cite{Fra13}, John Francis constructs a formal $\E_{n+1}$-moduli problem controlled by $\Z_{\E_n}(A)$. In this way, he proves $\Z_{\E_n}(A)$ is an $\E_{n+1}$-algebra if $A$ is an $\E_n$-algebra, which is also proved in \cite[Cor. 5.3.1.15]{HA} using Dunn additivity theorem.
\end{remark}

\begin{definition}
\normalfont
Let $\C$ be an $\E_n$-monoidal stable $k$-linear $\infty$-category in $\Pr^{St}_k$. Its \textit{$\E_{n+1}$-Hochschild cohomology complex} is defined to be $HH_{\E_{n+1}}^*(\C):=\mathrm{End}_{\Z_{\E_n}(\C)}(1)$ which is an $\E_{n+2}$-algebra.
\end{definition}

\begin{example}
    \normalfont
Let $\C$ be a plain stable $k$-linear $\infty$-category. Then $\Z_{\E_0}(\C)=\fun^L(\C,\C)$. If $\C=\LMod_A$ for some $\E_n$-algebra $A$, $HH_{\E_{n}}^*(\C)$ will be equivalent to $\Z_{\E_{n}}(A)$ \cite[Cor. 4.38]{Fra13}. In particular, when $n=1$, $HH_{\E_1}^*(\LMod_A)$ is the usual Hochschild cohomology of $A$.

Let $\C$ be an $\E_1$-monoidal category. For $\Z_{\E_1}(\C)$, as discussed in \cite[Remark 2.4]{Fra13}, $\Mod_{\C}^{\mathcal{\E}_1}(\Pr^{St}_k)$ is equivalent to $\C$-bimodules in $\Pr^{St}_k$. So that $\Z_{\E_1}(\C)=\End_{\mathbf{BiMod}_\C(\Pr^{St}_k)}(\C)$ and it is also called the \textit{Drinfeld center of $\C$}.
\end{example}

Our $HH_{\E_{n+1}}^*(\C)$ can also be identified with the notion of \textit{$(n+2)$-fold endomorphism object} in \cite[Def. 1.0.1]{Chen25}. Following \cite[Def. 5.2.5]{Ste20}, we can define \textit{presentable $k$-linear $(\infty,n)$-categories}. Recall that $\Catk$ consists of cocomplete $\infty$-categories and colimit preserving functors.

\begin{definition}
    \normalfont
For $n=0$, let $0\Pr_{k}^{L}:=\Mod_k$. Then the \textit{$\infty$-category of presentable $k$-linear $(\infty,n)$-categories} is defined to be 
$$n\Pr^{L}_{k}:=\Mod_{\mathrm{pr},\ (n-1)\Pr^{L}_k}(\Catk)$$
where $\Mod_{\mathrm{pr}}$ means presentable modules i.e. $n\Pr^{L}_{k}$ is the full subcategory of 
$$n\Pr^{L,\wedge}_{k}:=\Mod_{(n-1)\Pr^{L}_k}(\Catk)$$
consisting of presentable categories.
\end{definition}

Hom objects in an $\infty$-category $\C\in n\Pr^{L}_n$ may not lie in $(n-1)\Pr^{L}_k$ but they are always in $(n-1)\Pr^{L,\wedge}_k$, which also holds for $\infty$-categories in $n\Pr^{L,\wedge}_k$. 

It is shown in \cite[Prop. 5.1.10]{Ste20} that there is a symmetric monoidal functor
$$\LMod:\Alg_{\E_1}(n\Pr^{L}_{k})\rightarrow (n+1)\Pr^{L}_{k},\ \C\mapsto \LMod_{\C}(n\Pr^{L}_{k})$$
so that we can get a sequence of functors
$$\LMod^n:\Alg_{\E_n}(\Mod_k)\rightarrow \Alg_{\E_{n-1}}(1\Pr^{L}_{k})\rightarrow\cdots \rightarrow \Alg_{\E_1}((n-1)\Pr^{L}_k)\rightarrow n\Pr^{L}_k$$

\begin{definition}
\normalfont
Let $\C\in n\Pr^{L}_k$ and $M\in \C$. The \textit{$n$-fold endomorphism object} is defined to be 
$$\mathrm{End}_{\C}^{n}(M):=\hom_{\mathrm{End}_{\C}^n(M)}(\id_{M}^{n-1},\id_{M}^{n-1})$$
where $\mathrm{End}_{\C}^{1}(M)=\hom_\C(M,M)$ is in $(n-1)\Pr^{L,\wedge}_k$, $\mathrm{End}_{\C}^{2}(M)=\hom_{\mathrm{End}_{\C}^{1}(M)}(\id,\id)$ is in $(n-2)\Pr^{L,\wedge}_k$ and finally $\mathrm{End}_{\C}^{n}(M)$ is in $0\Pr^{L,\wedge}_k=\Mod_k$.
\end{definition}

\begin{remark}
    \normalfont
For $\C\in n\Pr^{L}_k$, because $n\Pr^{L}_k\in (n+1)\Pr^{L}_k$, we have $\End^{n+1}_{n\Pr^{L}_k}(\C)\in \Mod_k$. In particular, for an $\E_n$-algebra $A$ over $k$, we get $\End^{n+1}(A):=\End^{n+1}_{n\Pr^{L}_k}(\LMod^{n}_A)$.
\end{remark}

\begin{prop}\label{center}
Let $A$ be an $\E_n$-algebra over $k$. Then $\End^{n+1}(A)$ is identified with $\Z_{\E_n}(A)$. If $\C$ is an $\E_n$-monoidal stable $k$-linear $\infty$-category in $\Pr^{St}_k$, then $HH^{*}_{\E_{n+1}}(\C)$ is equivalent to $\End^{n+2}(\C):=\End_{(n+1)\Pr^{L}_k}^{n+2}(\LMod_{\C}^n)$.
\end{prop}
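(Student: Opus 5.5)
We argue by induction on $n$, iteratively peeling off one categorical level with the adjunction between $\E_1$-algebras and their module categories. The key input is the standard identification, for an $\E_1$-algebra $B$ in a presentable symmetric monoidal $\D$ (here $\D=(m-1)\Pr^{L}_k$), of the endomorphisms of the unit in $\LMod_B(\D)$ viewed as an object of $m\Pr^L_k$:
$$\hom_{m\Pr^{L}_k}(\LMod_B,\LMod_B)\simeq \mathbf{BiMod}_{B}(\D)\simeq \Mod^{\E_1}_{B}(\D).$$
Here the identity functor corresponds to the diagonal bimodule $B$. This is the (enriched) Eilenberg--Watts statement; we take it from \cite[Sec. 4.8]{HA} together with the symmetric monoidality of $\LMod$ from \cite[Prop. 5.1.10]{Ste20}.

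\textbf{Case $n=1$.} Take $B=A$ and $\D=\Mod_k$. Then $\End^{2}(A)=\hom_{\mathbf{BiMod}_A}(A,A)$, which is $\Z_{\E_1}(A)$ by definition (see the example after the definition of $\mathcal{O}$-centers, using \cite[Remark 2.4]{Fra13}).

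\textbf{Inductive step.} For general $n$, Dunn additivity $\E_n\simeq \E_1\otimes\E_{n-1}$ lets us regard $A$ as an $\E_1$-algebra in $\Alg_{\E_{n-1}}(\Mod_k)$. Then $\LMod_A$ is an $\E_{n-1}$-algebra in $1\Pr^L_k$, and $\LMod^n_A=\LMod^{n-1}_{\LMod_A}$. Applying the displayed equivalence at the top level gives
$$\End^1(\LMod^n_A)\simeq \Mod^{\E_1}_{\LMod^{n-1}_{\LMod_A}}\big((n-1)\Pr^L_k\big),$$
with the identity corresponding to the unit module. Taking further endomorphisms of the identity therefore reduces $\End^{n+1}(A)$ to $\End^{n}$ of this module category at the unit. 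We then use the compatibility
$$\Mod^{\E_n}_A\simeq \Mod^{\E_1}_{A}\big(\Mod^{\E_{n-1}}_A\big),$$
available from \cite[Thm. 5.1.3.2]{HA} and \cite[Sec. 3]{Fra13} via the factorization of the $\E_n$-operad. Combined with the inductive hypothesis applied one categorical level down, this identifies the iterated endomorphisms with $\hom_{\Mod^{\E_n}_A}(A,A)=\Z_{\E_n}(A)$.

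\textbf{Categorical statement.} The second claim is the same argument one level higher. Regard $\C$ as an $\E_n$-algebra in $1\Pr^L_k$ instead of $\Mod_k$. The same induction shows that $\End^{n+1}_{(n+1)\Pr^L_k}(\LMod^n_\C)$ computes $\Z_{\E_n}(\C)$ as an object of $1\Pr^{L,\wedge}_k$, namely $\hom_{\Mod^{\E_n}_\C(\Pr^{St}_k)}(\C,\C)$, with unit $\id$. One more endomorphism step takes $\hom$ of the unit in this category, giving $\End^{n+2}(\C)=\End_{\Z_{\E_n}(\C)}(1)=HH^*_{\E_{n+1}}(\C)$.

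\textbf{Main obstacle.} We expect the inductive step to be the hard part. Beyond the object-level equivalence, we must check that the identity $1$-morphism at each level matches the unit module (the algebra itself), and that the identifications are natural enough to iterate. The cleanest route is probably to cite \cite[Cor. 4.38]{Fra13}, which already identifies $\E_n$-Hochschild cohomology of $A$ with that of $\LMod_A$ one level down, and induct with it rather than re-deriving the module equivalences.
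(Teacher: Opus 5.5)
Your overall plan matches the paper. You peel off one categorical level at a time, keep track of the fact that the identity $1$-morphism corresponds to the unit, and run the induction for $\E_j$-algebras in $m\Pr^{L}_k$ for every $m$, so that it applies to $\LMod_A\in\Alg_{\E_{n-1}}(1\Pr^{L}_k)$ and, one level up, to $\C$.

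The gap is the inductive step itself. The compatibility $\Mod^{\E_n}_A\simeq\Mod^{\E_1}_A(\Mod^{\E_{n-1}}_A)$ fails with the structure you cite.
\begin{itemize}
\item In the $\E_{n-1}$-monoidal structure of \cite[Thm. 5.1.3.2]{HA}, $A$ is the monoidal unit of $\Mod^{\E_{n-1}}_A$.
\item Bimodules over the unit algebra are just the objects of the category, so the right-hand side is $\Mod^{\E_{n-1}}_A$ again.
\item You would therefore get $\Z_{\E_n}(A)\simeq\Z_{\E_{n-1}}(A)$.
\end{itemize}
This already fails for $n=2$ and $A=k[x]$: $\Z_{\E_2}(k[x])\simeq k[x][u]$ with $u$ in cohomological degree $2$, whereas $HH^*(k[x])$ vanishes in degree $2$. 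Making some variant of the compatibility work would essentially amount to reproving Francis's identification below. You also never explain how the compatibility combines with the inductive hypothesis. There is also a minor indexing slip: the top-level Eilenberg--Watts step gives bimodules over $\LMod^{n-1}_A=\LMod^{n-2}_{\LMod_A}\in\Alg_{\E_1}((n-1)\Pr^{L}_k)$, not over $\LMod^{n-1}_{\LMod_A}=\LMod^n_A$.

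The missing input is \cite[Prop. 4.36]{Fra13}: $\Z_{\E_{n-1}}(\LMod_A)\simeq\Mod^{\E_n}_A$, with the unit $\id_{\LMod_A}$ corresponding to $A$. Your Eilenberg--Watts equivalence is only its $\E_1$ case. The higher cases are exactly what the induction needs, and they are a separate input, not something your argument derives. With this input the inductive step is one line:
\begin{itemize}
\item $\Z_{\E_n}(A)=\End_{\Mod^{\E_n}_A}(A)\simeq\End_{\Z_{\E_{n-1}}(\LMod_A)}(1)$;
\item the inductive hypothesis for the $\E_{n-1}$-algebra $\LMod_A$ in $1\Pr^{L}_k$ identifies $\Z_{\E_{n-1}}(\LMod_A)$, unit included, with $\End^{n}_{n\Pr^{L}_k}(\LMod^n_A)$;
\item taking endomorphisms of the unit gives $\Z_{\E_n}(A)\simeq\End^{n+1}(A)$.
\end{itemize}
This is precisely the paper's proof. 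It iterates \cite[Prop. 4.36]{Fra13} down to $\Z_{\E_0}(\LMod^n_A)=\hom_{n\Pr^{L}_k}(\LMod^n_A,\LMod^n_A)$, then says the same argument applies to $\C$. The fallback you name at the end, inducting with \cite[Cor. 4.38]{Fra13}, is this same route and closes the gap. Replace the compatibility argument with it rather than keeping both. Once the step is fixed, your categorical paragraph goes through as written.
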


\begin{proof}
Suppose $A\in \Alg^{(n)}_k$. From \cite[Prop. 4.36]{Fra13}, we know $\Z_{\E_{n-1}}(\LMod_A)\simeq \Mod^{\E_n}_{A}$, so that 
$$\Z_{\E_n}(A)=\End_{\Mod^{\E_n}_{A}}(1)\simeq \End_{\Z_{\E_{n-1}}(\LMod_A)}(1)$$
The same argument implies $\Z_{\E_{n-1}}(\LMod_A)\simeq \End_{\Z_{\E_{n-2}}(\LMod_{A}^2)}(1)$. Finally we have $\Z_{\E_0}(\LMod_{A}^n)=\hom_{n\Pr^{L}_k}(\LMod_{A}^n,\LMod_{A}^n)$, and hence $\Z_{\E_n}(A)\simeq \End^{n+1}(A)$.

The same arguments can be applied to $HH_{\E_{n+1}}^*(\C)=\End_{\Z_{\E_n(\C)}}(1)$.
\end{proof}

Let $\C\in n\Pr^{L}_n$ be a presentable $(\infty,n)$-category and $E\in \C$. We can also study the deformation problem deforming the object $E$. At first, there is a coCartesian fibration $\LMod(\C)\rightarrow \Alg((n-1)\Pr^{L}_k)$ where $\LMod(\C)$ consists of pairs $(\mathcal{A},E_{\mathcal{A}})$ such that $\A$ is an $\E_1$-algebra object in $(n-1)\Pr^{L}_k$ and $E_\A\in \LMod_\A(\C)$. Here we have
$$\LMod_\A(\C)\simeq \LMod_{\A}((n-1)\Pr^{L}_k)\otimes_{(n-1)\Pr^{L}_k}\C$$
Take the pullback along $\LMod^{n-1}:\Alg_{k}^{(n)}\rightarrow \Alg((n-1)\Pr^{L}_k)$,
\[\begin{tikzcd}
	{\LMod^{\mathrm{alg}}(\C)} & {\LMod(\C)} \\
	{\Alg_{k}^{(n)}} & {\Alg((n-1)\Pr^{L}_k)}
	\arrow[from=1-1, to=1-2]
	\arrow[from=1-1, to=2-1]
	\arrow["\lrcorner"{anchor=center, pos=0.125}, draw=none, from=1-1, to=2-2]
	\arrow[from=1-2, to=2-2]
	\arrow[from=2-1, to=2-2]
\end{tikzcd}\]
and we will get a new coCartesian fibration $\LMod^{\mathrm{alg}}(\C)\rightarrow \Alg_{k}^{(n)}$, whose subcategory spanned by coCartesian morphisms gives a left fibration $\LMod^{\mathrm{alg},\ \text{coCart}}(\C)\rightarrow \Alg_{k}^{(n)}$. The $\infty$-category of deformations of $E\in C$ is defined to be 
$$\mathrm{Deform}[E]:=\LMod^{\mathrm{alg},\ \text{coCart}}(\C)/(k,E)$$
Note that from $0\Pr^{L}_k=\Mod_k$, $(n-1)\Pr^{L}_k$ is actually equivalent to $\Mod_{k}^{n}$, so $\C\simeq \Mod^{n}_k\otimes_{(n-1)\Pr^{L}_k}\C$.

This defines a functor
$$\mathrm{ObjDef}_E:\Alg_{k}^{(n),\aug}\rightarrow \widehat{s\mathbf{Set}},\ A\mapsto \big(\LMod_{\LMod^{n-1}_A}(\C)\times_{\C} \{E\}\big)^{\simeq}$$
From \cite[Prop. 2.2.5, Thm. 2.4.1]{Chen25}, we obtain the following theorem.

\begin{theorem}
Let $\C\in n\Pr^{L}_{k}$ be a presentable $(\infty,n)$-category and $E\in \C$. Then $\mathrm{ObjDef}_E$ is an $n$-proximate formal $\E_{n}$-moduli problem in a larger universe, whose corresponding formal $\E_{n}$-moduli problem $\mathrm{ObjDef}_{E}^\wedge$ is equivalent to
$$\Map_{\Alg_{k}^{(n),\mathrm{aug}}}(\mathscr{D}^{(n)}(-),k\oplus \End^{n}_{\C}(E))$$
where $\mathscr{D}^{(n)}$ is the $\E_n$-Koszul duality functor.
\end{theorem}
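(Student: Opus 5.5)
We argue by induction on $n$, using the cases $n=1$ and $n=2$ as base cases. For $n=1$, $\C\in 1\Pr^{L}_k$ and the statement is the theorem on $\mathrm{ObjDef}_E$ in Section 2: a $1$-proximate problem controlled by $\End_{\C}(E)=\End^{1}_{\C}(E)$. For $n=2$, the claim is Lurie's result on $\mathrm{CatDef}_\C$ (Theorem \ref{3.1} and its sequel) applied to $E\in\C\in 2\Pr^L_k$. There $\LMod_{\LMod_A}(\C)$ plays the role of $\RMod_{\LMod_A}(\Pr^{St}_k)$, and the tangent algebra is $\End_{\End(E)}(\id_E)=\End^{2}_{\C}(E)$.

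\textbf{Proximateness.} Given a pullback $A=A_1\times_{A_{01}}A_0$ in $\Alg_k^{(n),\Art}$, we first show that
$$\LMod^{n-1}_A\rightarrow \LMod^{n-1}_{A_1}\times_{\LMod^{n-1}_{A_{01}}}\LMod^{n-1}_{A_0}$$
is fully faithful, in the sense that the unit is an equivalence as $(n-1)$-morphisms. This goes by induction on the categorical level, starting from Prop. \ref{2.3}: fully faithfulness of $\LMod_A\to\LMod_{A_1}\times_{\LMod_{A_{01}}}\LMod_{A_0}$ is preserved after tensoring with any module category, because the right adjoint tensors along. Consequently, after applying $-\otimes_{(n-1)\Pr^L_k}\C$, the comparison map on deformations $\mathrm{ObjDef}_E(A)\to \mathrm{ObjDef}_E(A_1)\times_{\mathrm{ObjDef}_E(A_{01})}\mathrm{ObjDef}_E(A_0)$ is fully faithful on mapping $(n-1)$-categories. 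Passing to maximal subgroupoids, a fully faithful map of $(\infty,n-1)$-categories gives an $(n-2)$-truncated map of cores, since the failure of essential surjectivity only appears at levels up to $n-1$. This proves that $\mathrm{ObjDef}_E$ is $n$-proximate. The size issues are handled by passing to a larger universe, as in \cite[Cor. 16.6.2.4]{SAG}.

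\textbf{Tangent computation.} It suffices to compute the tangent complex of $L(\mathrm{ObjDef}_E)$ together with its $\E_n$-structure; by \cite[Prop. 12.2.2.6]{SAG} this determines the problem. Evaluating on $k\oplus k[m]$ and using the loop structure $\Omega\,\mathrm{ObjDef}_E(k\oplus k[m+1])\simeq \mathrm{ObjDef}_E(k\oplus k[m])$, which holds after localization, the problem reduces to automorphisms of the trivial deformation $E\otimes \LMod^{n-1}_{k\oplus k[m]}$. Iterating loops $n-1$ times passes from $E$ to $\id_E$, then to $\id_{\id_E}$, and so on, landing in $\End^{n}_{\C}(E)[\,n+m\,]$. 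This matches $\Map(\mathscr{D}^{(n)}(k\oplus k[m]),k\oplus R)\simeq R[n+m]$ with $R=\End^n_\C(E)$.

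To upgrade the identification to one of non-unital $\E_n$-algebras, we use Lurie's deformation-of-objects machinery in the $\E_n$ setting, namely \cite[Thm. 16.5.4.1]{SAG} iterated. The tautological action of $\End^{n}_{\C}(E)$ on $E$ makes $E$ an $\LMod^{n-1}_{\End^n(E)}$-module, and Koszul duality then yields a natural map $\mathrm{ObjDef}_E\to\Psi(k\oplus\End^n_\C(E))$. We then check that this map is an equivalence on tangent complexes.

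\textbf{Main obstacle.} The hardest step is this last one: constructing the comparison map coherently as $\E_n$-algebras at categorical level $n$, and verifying that the $\E_n$-structure on the tangent complex agrees with the one on $\End^n_\C(E)$ coming from Dunn additivity. I would first try to reduce to the universal case via Prop. \ref{center}, using the identification of $\End^n$ with an iterated center.
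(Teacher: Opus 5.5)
The paper gives no argument of its own for this statement; it imports it from \cite[Prop. 2.2.5, Thm. 2.4.1]{Chen25}. Your outline therefore has to stand on its own, and its second half does not.

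\textbf{The missing comparison map.} Your loop computation uses that $\mathrm{ObjDef}_E\to L\,\mathrm{ObjDef}_E$ is $(n-2)$-truncated, so $n$-fold loops at the trivial deformation agree. What it yields is an identification of the \emph{underlying $k$-module} of $\T_{L\,\mathrm{ObjDef}_E}$ with $\End^n_{\C}(E)[n]$. That does not determine the formal moduli problem. $\Psi$ is an equivalence with augmented (equivalently non-unital) $\E_n$-algebras, and the same module with, say, the zero multiplication gives a different problem; for instance, the obstruction map $H^n\to H^{n+1}$ would vanish. Nor does \cite[Prop. 12.2.2.6]{SAG} help without a map: it only says that a \emph{given} morphism of formal moduli problems inducing an equivalence on tangent complexes is an equivalence.

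So everything rests on a natural transformation $\mathrm{ObjDef}_E\to\Psi(k\oplus\End^n_{\C}(E))$. You name this as the ``main obstacle'' but do not construct it; the sentence about the tautological action and ``Koszul duality then yields a natural map'' is not a construction. What is needed is an $\E_n$-algebra map $\mathscr{D}^{(n)}(A)\to\End^n_{\C}(E)$, natural in Artin $A$ and in the deformation $E_A\in\LMod_{\LMod^{n-1}_A}(\C)$. This is the analogue of Lurie's $n=1$ construction, where the endomorphism algebra of $k$ as an $A$-module acts on $E\simeq E_A\otimes_A k$ through its action on $k$. The map must also be compatible with your computation on $k\oplus k[m]$.

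Your fallback via Prop.~\ref{center} cannot supply this map. That proposition compares iterated endomorphism algebras with $\E_n$-centers and says nothing about deformation functors or about maps out of $\mathscr{D}^{(n)}(A)$. In the paper it is used downstream, together with this theorem, to prove Theorem~\ref{mainA}.

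\textbf{Smaller issues.}

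\emph{Base case and induction.} Your base case $n=2$ is not Lurie's theorem. \cite[Sec. 16.6]{SAG} treats only a stable $k$-linear category regarded as an object of $\Pr^{St}_k$, not an arbitrary object of an arbitrary $\C\in 2\Pr^L_k$. Moreover, neither your proximateness argument nor your tangent computation ever invokes an inductive hypothesis, so the induction on $n$ does no work.

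\emph{Meaning of ``fully faithful''.} In the proximateness step, ``the unit is an equivalence'' cannot be meant literally once $n\ge 3$. Already for $\LMod^2_A$ the unit at $\mathcal{E}$ is the functor $\mathcal{E}\to(\mathcal{E}\otimes_A A_1)\times_{\mathcal{E}\otimes_A A_{01}}(\mathcal{E}\otimes_A A_0)$ from the proof of Theorem~\ref{3.1}. That functor is fully faithful but in general not an equivalence, which is why that argument yields only $0$-truncatedness.

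If, as your phrase ``as $(n-1)$-morphisms'' suggests, you mean an iterated statement, then your count of $(n-2)$-truncatedness is right. That statement is: the unit's components admit right adjoints whose units again admit right adjoints, and so on, ending with an invertible unit at the level of $(n-1)$-morphisms. But then you must check at every stage that these right adjoints are colimit-preserving and $\LMod^{j}_A$-linear. Only then does tensoring with a module category (and finally with $\C$) preserve the whole tower. This verification is the real content of the first half, and your outline only gestures at it.
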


\begin{remark}
    \normalfont
Let $\C\in  n\Pr^{L}_{k}$. The problem of deforming $\C$ as a presentable $(\infty,n)$-category is equivalent to the problem of deforming $\C$ as an object in $n\Pr^{L}_{k}$ where $n\Pr^{L}_{k}\in (n+1)\Pr^{L}_{k}$. So this defines an $(n+1)$-proximate formal $\E_{n+1}$-moduli problem $\mathrm{CatDef}_\C$ (in a larger universe), whose corresponding formal $\E_{n+1}$-moduli problem $\mathrm{CatDef}_{\C}^\wedge$ is equivalent to
$$\Map_{\Alg_{k}^{(n+1),\mathrm{aug}}}(\mathscr{D}^{(n+1)}(-),k\oplus \End^{n+1}_{n\Pr^{L}_k}(\C))$$
where $\mathscr{D}^{(n+1)}$ is the $\E_{n+1}$-Koszul duality functor.
\end{remark}

\begin{corollary}
Let $A$ be an $\E_n$-algebra over $k$. Then $\mathrm{CatDef}_{\LMod_{A}^n}^\wedge$ is a formal $\E_{n+1}$-moduli problem controlled by the augmented $\E_{n+1}$-algebra $k\oplus \Z_{\E_n}(A)$.
\end{corollary}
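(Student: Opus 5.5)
The corollary should follow by specializing the preceding Remark to $\C=\LMod_A^n\in n\Pr^{L}_k$ and then identifying the controlling algebra via Proposition \ref{center}. The Remark gives
$$\mathrm{CatDef}_{\LMod_A^n}^\wedge\simeq \Map_{\Alg_{k}^{(n+1),\mathrm{aug}}}\big(\mathscr{D}^{(n+1)}(-),k\oplus \End^{n+1}_{n\Pr^{L}_k}(\LMod_A^n)\big).$$
The right-hand side is $\Psi\big(k\oplus \End^{n+1}_{n\Pr^{L}_k}(\LMod_A^n)\big)$ with $\Psi:\Alg_{k}^{(n+1),\mathrm{aug}}\rightarrow \Moduli_{k}^{(n+1)}$. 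Since $\Psi$ takes values in formal moduli problems, $\mathrm{CatDef}_{\LMod_A^n}^\wedge$ is a formal $\E_{n+1}$-moduli problem, and it is controlled by $k\oplus \End^{n+1}_{n\Pr^{L}_k}(\LMod_A^n)$.

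Next I will identify this object with $k\oplus \Z_{\E_n}(A)$. By the Remark following the definition of $n$-fold endomorphism objects, $\End^{n+1}(A)$ is by definition $\End^{n+1}_{n\Pr^{L}_k}(\LMod^n_A)$. Proposition \ref{center} then gives $\End^{n+1}(A)\simeq \Z_{\E_n}(A)$. Substituting this into the formula above yields the claimed description.

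The step I expect to need care is making this an equivalence of non-unital $\E_{n+1}$-algebras, not merely of $k$-modules, since $\Psi$ sees the algebra structure. I would handle it with the observation already used in Section 1: by \cite[Prop. 12.2.2.6]{SAG}, two formal moduli problems are equivalent once their tangent complexes agree. Thus it suffices to check that $\T_{\mathrm{CatDef}^\wedge}[-n-1]$ and $\Z_{\E_n}(A)$ agree. On $\Z_{\E_n}(A)$, the $\E_{n+1}$-structure comes from Dunn additivity (\cite[Cor. 5.3.1.15]{HA}), and on $\End^{n+1}$ it comes from iterated endomorphisms. The chain of equivalences in the proof of Proposition \ref{center} is
$$\Z_{\E_n}(A)\simeq \End_{\Z_{\E_{n-1}}(\LMod_A)}(1)\simeq\cdots.$$
Each step is an endomorphism object of a unit, so each equivalence respects the algebra structure arising from the additivity of endomorphisms. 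Iterating this should give an equivalence of $\E_{n+1}$-algebras, which completes the argument.
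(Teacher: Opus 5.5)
Your argument is essentially the paper's: the corollary is the preceding Remark specialized to $\C=\LMod_A^n\in n\Pr^{L}_k$, combined with Proposition \ref{center}, which identifies $\End^{n+1}_{n\Pr^{L}_k}(\LMod_A^n)=\End^{n+1}(A)$ with $\Z_{\E_n}(A)$. One caution on your third paragraph: \cite[Prop. 12.2.2.6]{SAG} only says that a \emph{given map} of formal moduli problems inducing an equivalence on tangent complexes is an equivalence, not that abstractly equivalent tangent complexes force equivalent formal moduli problems (one underlying module can carry non-equivalent algebra structures), so the tangent-complex detour reduces nothing --- what you actually need, and then sketch via the unit-endomorphism chain, is an equivalence $\End^{n+1}(A)\simeq\Z_{\E_n}(A)$ of non-unital $\E_{n+1}$-algebras, after which applying the equivalence $\Psi$ concludes directly.
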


Since we have already known the deformation problems of objects and presentable $(\infty,n)$-categories, we can deform them together to get the simultaneous deformation problem in higher categories.

Similar to the $(\infty,1)$-category case, we have a coCartesian fibration 
$$\mathbf{LMod}((n\Pr^{L}_{k})_{\Mod_{k}^{n}/})\rightarrow \Alg((n\Pr^{L}_{k})_{\Mod_{k}^{n}/})\simeq \Alg(n\Pr^{L}_{k})$$
Now considering the following pullback,
\[\begin{tikzcd}
	{\mathbf{LCat}_{k}^{*}} & {\mathbf{LMod}((n\Pr^{L}_{k})_{\Mod_{k}^{n}/})} \\
	{\Alg_{k}^{(n+1)}} & {\Alg(n\Pr^{L}_{k})}
	\arrow[from=1-1, to=1-2]
	\arrow[from=1-1, to=2-1]
	\arrow["\lrcorner"{anchor=center, pos=0.125}, draw=none, from=1-1, to=2-2]
	\arrow[from=1-2, to=2-2]
	\arrow[from=2-1, to=2-2]
\end{tikzcd}\]
the fiber of the coCartesian fibration $\mathbf{LCat}_{k}^{*}\rightarrow \Alg_{k}^{(n+1)}$ over an $\E_{n+1}$-algebra $A$ consists of pairs $(\D_A,E_A)$ such that $\D_A\in \LMod_{\LMod_{A}^{n}}(n\Pr^{L}_k)$ and $E_A\in \D_A$. 

Let $\C\in n\Pr^{L}_n$ and $E\in \C$. The \textit{$\infty$-category of deformations of the pair $(\C,E)$} is defined to be 
$$\mathrm{Deform}[\C,E]:=\mathbf{LCat}_{k}^{*,\text{coCart}}/(k,\C,E)$$
which is a left fibration over $\Alg_{k}^{(n+1),\aug}$, so this gives a functor
$$\mathrm{SimDef}_{(\C,E)}:\Alg_{k}^{(n+1),\aug}\rightarrow \widehat{s\mathbf{Set}},\ A\mapsto\big(\LMod_{\LMod_{A}^n}((n\Pr^{L}_{k})_{\Mod_{k}^n/})\times_{(n\Pr^{L}_{k})_{\Mod_{k}^n/}}\{(\C,E)\}\big)^{\simeq}$$
There are two natural morphisms i.e. $\mathrm{SimDef}_{(\C,E)}\rightarrow \mathrm{CatDef}_{\C}$ sending the pair $(\C_A,E_A)$ to $\C_A$, and $\mathrm{ObjDef}_{E}^{(n+1)}\rightarrow \mathrm{SimDef}_{(\C,E)}$ sending $E_A$ to $(\LMod_{A}^{n}\otimes_{\Mod_{k}^n}\C, E_A)$. Here $\mathrm{ObjDef}_{E}^{(n+1)}$ means it is restricted to $\Alg_{k}^{(n+1),\aug}$.

\begin{prop}
The two morphisms above form a fiber sequence
$$\mathrm{ObjDef}_{E}^{(n+1)}\rightarrow \mathrm{SimDef}_{(\C,E)}\rightarrow \mathrm{CatDef}_{\C}$$
in $\fun(\Alg_{k}^{(n+1),\mathrm{Art}},\widehat{s\Set})$.
\end{prop}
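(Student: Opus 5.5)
We would prove this objectwise. Fix an Artin (in fact any augmented) $\E_{n+1}$-algebra $A$. Since limits in $\fun(\Alg_{k}^{(n+1),\mathrm{Art}},\widehat{s\Set})$ are computed pointwise, it suffices to show that for each $A$ the square
\[\begin{tikzcd}
	{\mathrm{ObjDef}_{E}^{(n+1)}(A)} & {\mathrm{SimDef}_{(\C,E)}(A)} \\
	{*} & {\mathrm{CatDef}_{\C}(A)}
	\arrow[from=1-1, to=1-2]
	\arrow[from=1-1, to=2-1]
	\arrow[from=1-2, to=2-2]
	\arrow[from=2-1, to=2-2]
\end{tikzcd}\]
is a pullback of spaces, where the base point of $\mathrm{CatDef}_{\C}(A)$ is the trivial deformation $\C_A^{\mathrm{triv}}:=\LMod_{A}^{n}\otimes_{\Mod_{k}^n}\C$. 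Naturality in $A$ is automatic, because all maps are induced by functors of the underlying coCartesian fibrations ($\mathbf{LCat}_{k}^{*}\to$ its analogue without the marked object, and the inclusion of the fiber over the trivial deformation), which commute with base change $-\otimes_A B$.

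The key step is the computation of the fiber of the forgetful map $(\LMod_{\LMod_{A}^n}(n\Pr^{L}_k)_{\LMod_{A}^n/})^{\simeq}\to \LMod_{\LMod_{A}^n}(n\Pr^{L}_k)^{\simeq}$ over an object $\D$. This fiber is the space of $\LMod_{A}^n$-linear functors $\LMod_{A}^n\to\D$, and by the free-module universal property it is $\D^{\simeq}$. We then add the constraint over $(\C,E)$. Writing $\mathrm{SimDef}_{(\C,E)}(A)$ as the fiber over $(\C,E)$ of the map from the pointed $A$-linear categories to the pointed $k$-linear categories, and commuting the two fibers (both are limits), we get that the fiber of $\mathrm{SimDef}_{(\C,E)}(A)\to\mathrm{CatDef}_\C(A)$ over $(\C_A,\phi:\C_A\otimes_A k\simeq\C)$ is
$$\big(\C_A\times_{\C_A\otimes_{A}k}\{\phi^{-1}E\}\big)^{\simeq},$$
the space of objects of $\C_A$ reducing to $E$. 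This is the unstraightening statement: a coCartesian fibration restricted to the fiber over a point of the base category is the fiber category. We would make it precise by the pasting lemma for pullbacks applied to the cube formed by $\mathbf{LCat}^{*}_k\to\mathbf{LCat}_k\to\Alg^{(n+1)}_k$ sliced over $(k,\C,E)$ and $(k,\C)$.

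Specializing to the base point $\C_A^{\mathrm{triv}}$, we identify
$$\LMod_{\LMod_{A}^n}^{\mathrm{triv}}\simeq \LMod_{A}^{n}\otimes_{\Mod_{k}^n}\C\simeq \LMod_{\LMod^{n-1}_A}(\C),$$
using the identification $\LMod_\A(\C)\simeq\LMod_\A((n-1)\Pr^L_k)\otimes\C$ recorded before the definition of $\mathrm{ObjDef}_E$, with $\A=\LMod^{n-1}_A$. The fiber is then exactly $\mathrm{ObjDef}_E^{(n+1)}(A)=(\LMod_{\LMod^{n-1}_A}(\C)\times_\C\{E\})^{\simeq}$, and under this identification the map into $\mathrm{SimDef}$ is the one defined in the statement.

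The main obstacle is the bookkeeping of the base-change compatibility $\LMod_{A}^{n}\otimes_{\Mod_k^n}\C\otimes_{\LMod^n_A}\Mod_k^n\simeq\C$: the basepoint equivalence $\phi$ must match the identification used in defining $\mathrm{ObjDef}_E$, so that the fiber lands over $\{E\}$ and not over some twist of it. We would handle this by working throughout with the left fibrations $\mathrm{Deform}[-]$, where these equivalences are part of the coCartesian edge data. If the pointwise argument becomes awkward, we would instead mimic \cite[Prop. 4.3]{BKP}, which treats the case $n=1$, essentially verbatim.
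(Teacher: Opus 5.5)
Your proposal is correct and is essentially the paper's argument. The paper also works pointwise in $A$: it identifies the fiber of $\mathrm{SimDef}_{(\C,E)}(A)\to\mathrm{CatDef}_{\C}(A)$ over the trivial deformation $\LMod_A^n\otimes_{\Mod_k^n}\C$ with the sub-$\infty$-groupoid of objects reducing to $E$, i.e.\ with $\mathrm{ObjDef}_E^{(n+1)}(A)$, and your extra steps (identifying the fiber of pointed over unpointed module categories with $\D^{\simeq}$ via the free-module property, and commuting the two fibers) simply make explicit what the paper asserts in one line.
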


\begin{proof}
For every augmented $\E_{n+1}$-algebra $A$, the fiber of 
$$\mathrm{SimDef}_{(\C,E)}(A)\rightarrow \mathrm{CatDef}_{\C}(A),\ (\C_A,E_A)\mapsto \C_A$$
at the trivial deformation $\LMod_{A}^{n}\otimes_{\Mod_{k}^n}\C$ is just the sub-$\infty$-groupoid of $(\LMod_{A}^{n}\otimes_{\Mod_{k}^n}\C)^{\simeq}$ consisting of deformations of $E$, which is equivalent to $\mathrm{ObjDef}_{E}^{(n+1)}(A)$.
\end{proof}

\begin{corollary}
Let $\C\in n\Pr^{L}_k$ and $E\in \C$. Then $\mathrm{SimDef}_{(\C,E)}$ is an $(n+1)$-proximate formal $\E_{n+1}$-moduli problem (in a larger universe), whose corresponding formal $\E_{n+1}$-moduli problem, denoted by  $\mathrm{SimDef}_{(\C,E)}^\wedge$, is equivalent to 
$$\Map_{\Alg_{k}^{(n+1),\mathrm{aug}}}\Big(\mathscr{D}^{(n+1)}(-),k\oplus\mathrm{fib}\big( \End^{n+1}_{n\Pr^{L}_k}(\C)\rightarrow \End_{\C}^{n}(E)\big)\Big)$$
where $\mathscr{D}^{(n+1)}$ is the $\E_{n+1}$-Koszul duality functor.
\end{corollary}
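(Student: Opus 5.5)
We deduce the statement from the fiber sequence of the preceding proposition,
$$\mathrm{ObjDef}_{E}^{(n+1)}\rightarrow \mathrm{SimDef}_{(\C,E)}\rightarrow \mathrm{CatDef}_{\C},$$
in the same way as the $(\infty,1)$-categorical case treated after \cite[Prop. 4.3]{BKP}.

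\textbf{Proximity.} First we show $\mathrm{SimDef}_{(\C,E)}$ is $(n+1)$-proximate. By the remark above, $\mathrm{CatDef}_\C$ is $(n+1)$-proximate. By the theorem on $\mathrm{ObjDef}_E$ (restricted along $\Alg_{k}^{(n+1),\Art}\rightarrow\Alg_{k}^{(n),\Art}$, which preserves the relevant pullbacks since forgetful functors preserve limits), the fibers are $n$-proximate, hence $(n+1)$-proximate. For a pullback square $A=A_1\times_{A_{01}}A_0$ as in the definition, consider the comparison map
$$\mathrm{SimDef}(A)\rightarrow \mathrm{SimDef}(A_1)\times_{\mathrm{SimDef}(A_{01})}\mathrm{SimDef}(A_0).$$
It lies over the corresponding map for $\mathrm{CatDef}_\C$, which is $(n-1)$-truncated. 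Fix a point of the target and identify the fibers of the two maps over $\mathrm{CatDef}$. After base change along the given deformation $\C_A$, the fiber of $\mathrm{SimDef}$ over $\C_A$ is $\mathrm{ObjDef}$ of the object inside $\C_A$; the argument is the one used for the preceding proposition, applied to $\C_A$ rather than the trivial deformation. That map is $(n-2)$-truncated. Since truncatedness is stable under composition and the total map is an extension of a truncated map by truncated fibers, the comparison map is $(n-1)$-truncated. Alternatively, one can use characterization (2) of \cite[Thm. 16.4.2.1]{SAG}, the theorem on proximate formal moduli problems stated earlier.

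\textbf{Identification of the localization.} Apply $L$ to the fiber sequence. $L$ is a left exact localization on functors (as in \cite[Sec. 16.4]{SAG}), and the unit maps are truncated, so we expect the sequence to remain a fiber sequence
$$\mathrm{ObjDef}_{E}^{(n+1),\wedge}\rightarrow \mathrm{SimDef}^\wedge_{(\C,E)}\rightarrow \mathrm{CatDef}^\wedge_\C$$
in $\Moduli_k^{(n+1)}$. This is the main obstacle: $L$ is not left exact in general. To handle it, we would argue on tangent complexes. The evaluation of each term at $k\oplus k[m]$ for $m\gg0$ agrees with that of its localization, since proximate problems become formal after enough looping (\cite[Sec. 16.4]{SAG}). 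The fiber sequence on values is preserved, giving a fiber sequence of spectra. Since $\Psi$ is an equivalence and tangent complexes detect equivalences, this suffices.

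\textbf{Tangent complexes and conclusion.} The tangent complexes give a fiber sequence of non-unital $\E_{n+1}$-algebras
$$\End^n_{\C}(E)[-1]\rightarrow \T_{\mathrm{SimDef}^\wedge}[-n-1]\rightarrow \End^{n+1}_{n\Pr^L_k}(\C).$$
Rotating, $\T_{\mathrm{SimDef}^\wedge}[-n-1]\simeq \mathrm{fib}(\End^{n+1}_{n\Pr^L_k}(\C)\rightarrow \End^n_\C(E))$. The connecting map is the action map, namely evaluation of a higher endomorphism of the identity at $E$. This map is $\E_{n+1}$-monoidal: it is induced by the functor $\mathrm{SimDef}\rightarrow\mathrm{CatDef}$ together with the point $E$, using Prop. \ref{center}-type iterated-endomorphism identifications. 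Since $\Psi$ is an equivalence, the structure of the fiber as an augmented $\E_{n+1}$-algebra is determined, and we obtain the stated formula.
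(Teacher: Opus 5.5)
Your overall route is the paper's. The paper gives the corollary no separate proof: it is read off from the fiber sequence $\mathrm{ObjDef}_E^{(n+1)}\to\mathrm{SimDef}_{(\C,E)}\to\mathrm{CatDef}_\C$, the theorem on $\mathrm{ObjDef}_E$ and the remark on $\mathrm{CatDef}_\C$, by passing to formal moduli problems and rotating the tangent-complex sequence, as in the $n=1$ discussion after \cite[Prop. 4.3]{BKP}.

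\textbf{The localization step.} The paper silently assumes that $L$ preserves the fiber sequence, so you are right to isolate this step. However, your justification is false as written: the values of a proximate problem at $k\oplus k[m]$ need not agree with those of its localization for any $m$. Take $\C=\Mod_k$ and $E=\bigoplus_{i\in\mathbb{Z}}k[i]$. The periodicity isomorphism $E\simeq E[m+1]$ is a class in $\mathrm{Ext}^{m+1}(E,E)=\pi_0\mathrm{ObjDef}_E^\wedge(k\oplus k[m])$ that no deformation realizes, since the associated extension $E[m]\to M\to E$ would force $M\simeq 0$.

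The correct statement behind ``formal after enough looping'' is the following. For an $(n+1)$-proximate $X$, the unit $X\to LX$ is objectwise $(n-1)$-truncated, so its $(n+1)$-fold loop map at the trivial deformation is an equivalence. Hence $LX(k\oplus k[j])\simeq\Omega^{n+1}LX(k\oplus k[j+n+1])\simeq\Omega^{n+1}X(k\oplus k[j+n+1])$, naturally in $X$. Looping at the base point preserves your fiber sequence, so this gives levelwise fiber sequences of tangent spectra. Then $\mathrm{ObjDef}_E^{(n+1),\wedge}\to\mathrm{fib}(\mathrm{SimDef}^\wedge_{(\C,E)}\to\mathrm{CatDef}^\wedge_\C)$ is a map of formal moduli problems inducing an equivalence on tangent complexes, hence an equivalence.

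This repair uses the $(n+1)$-proximity of $\mathrm{SimDef}_{(\C,E)}$, so your first step is genuinely needed. There, though, the fiberwise $(n-2)$-truncatedness over a non-trivial $\C_A$ does not come from the preceding proposition, which only identifies the fiber over the trivial deformation. It comes from the descent full-faithfulness of $\C_A\to(\C_A\otimes_A A_1)\times_{\C_A\otimes_A A_{01}}(\C_A\otimes_A A_0)$, as in the proof of Theorem \ref{3.1} and its $(\infty,n)$-analogue.

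\textbf{The final step.} Your justification here is also wrong. $\End^n_\C(E)$ is only an $\E_n$-algebra, so the evaluation map $\End^{n+1}_{n\Pr^L_k}(\C)\to\End^n_\C(E)$ cannot be $\E_{n+1}$-monoidal. Moreover, non-unital $\E_{n+1}$-algebras do not form a stable category in which a fiber sequence can be rotated. What your argument actually proves is that the non-unital $\E_{n+1}$-algebra $\T_{\mathrm{SimDef}^\wedge_{(\C,E)}}[-n-1]$ has underlying module the fiber of that $\E_n$-map. The $\E_{n+1}$-structure on ``$\mathrm{fib}$'' in the statement must be understood as this transported structure, as in Francis's sequence $E[-1]\to\T_{\E_n\mathrm{AlgDef}_E^\wedge}[-n-1]\to\Z_{\E_n}(E)$ quoted after Theorem \ref{2.6}. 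The paper is equally informal on this point, so this corrects your justification rather than separating you from the paper.
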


Now we can finish the proof of our Theorem \ref{mainA} following ideas from \cite[Example 3.5.1]{Chen25} and \cite{Fra13}.

\begin{proof}[Proof of Theorem \ref{mainA}]
Let $\C$ be an $\E_{n}$-monoidal presentable $k$-linear $\infty$-category in $\Pr^{L}_k$. We consider its $\E_n$-monoidal deformations with the left linear action from some $\E_{n+2}$-algebra. The deformation problem is denoted by $\E_n\mathrm{CatDef}_\C$, and $\E_n\mathrm{CatDef}_{\C}^\wedge$ is the corresponding formal $\E_{n+2}$-moduli problem.

Ar first, there is a morphism $\E_{n}\mathrm{CatDef}_{\C}\rightarrow \mathrm{CatDef}_{\LMod_{\C}^{n}}$, sending any $\E_n$-monoidal deformation of $\C$ over an Artin $\E_{n+2}$-algebra $A$ i.e. $\C_A\in \Alg_{\E_n}(\LMod^{2}_A)$, to $\LMod_{\C_{A}}^{n}$. $\LMod_{\C_{A}}^{n}$ gives a deformation of $\LMod_{\C}^n$ because the $\LMod$ functor is symmetric monoidal and then 
$$\Mod_{k}^{n+1}\otimes_{\LMod_{A}^{n+1}}\LMod_{\C_A}^n\simeq \LMod^{n}_{\Mod_k\otimes _{\LMod_A}\C_A}\simeq \LMod_{\C}^n$$

Next we can also construct a morphism $\mathrm{ObjDef}_{\LMod_{\C}^{n-1}\in \LMod_{\C}^n}^{(n+2)}\rightarrow \E_{n}\mathrm{CatDef}_{\C}$ which is given by $n$-fold endomorphism object. We know for an $\E_m$-monoidal category with $m\geq 0$, its endomorphism space of the unit should give an $\E_{m+1}$-monoidal object (see e.g. \cite[Cons. D.1.5.4]{SAG}). So this $n$-fold endomorphism object is actually an $\E_n$-monoidal $(\infty,1)$-category. 

Let $M\in A\otimes_k \LMod_{\C}^n := \LMod_{A}^{n+1}\otimes_{\Mod_{k}^{n+1}}\LMod_{\C}^n$ be a deformation of $\LMod_{\C}^{n-1}$, so that $\Mod_{k}^{n}\otimes_{\LMod_{A}^{n}}M\simeq \LMod_{\C}^{n-1}$. Then using \cite[Lem. 2.4.11]{Chen25}, we have
\begin{align*}
\Mod_k\otimes_{\LMod_A}\End^{n}_{A\otimes_k \LMod_{\C}^n}(M)&\simeq  \End^{n}_{k\otimes_A A\otimes_k \LMod_{\C}^n}(\Mod_{k}^{n}\otimes_{\LMod_{A}^{n}}M)\\
&\simeq \End^{n}_{\LMod_{\C}^n}(\LMod_{\C}^{n-1})\\
&\simeq \End_{\LMod_\C}(\C),\ \text{since $\LMod_{\C}^{n-1}$ is the unit in $\LMod_{\C}^n$}\\
&\simeq \C
\end{align*}
The key point for equivalences above is to notice $\LMod_A$ is dualizable as stated in \cite[Remark 4.8.4.8]{HA}. Hence we see $\End^{n}_{A\otimes_k \LMod_{\C}^n}(M)$ gives an $\E_n$-monoidal deformation of $\C$. 

Now we get a sequence of morphisms
\[\begin{tikzcd}
	{\mathrm{ObjDef}_{\LMod_{\C}^{n-1}\in \LMod_{\C}^n}^{(n+2)}} & {\E_{n}\mathrm{CatDef}_{\C}} & {\mathrm{CatDef}_{\LMod_{\C}^{n}}}
	\arrow["{\End^{n}}", from=1-1, to=1-2]
	\arrow["{\LMod^n}", from=1-2, to=1-3]
\end{tikzcd}\]
which is actually a fiber sequence in $\fun(\Alg_{k}^{(n+2),\Art},\widehat{s\Set})$. We explain this assertion as follows.
\\

Firstly, there is a fiber sequence of $\infty$-groupoids
\[\begin{tikzcd}
	{\mathcal{M}^{\simeq}} & {\Alg_{\E_0}(\LMod^{n}_{\mathcal{E}})^{\simeq}} & {(\LMod_{\mathcal{E}}^{n})^{\simeq}}
	\arrow[from=1-1, to=1-2]
	\arrow["{\mathrm{forgetful}}", from=1-2, to=1-3]
\end{tikzcd}\]
at $\mathcal{M}\in \LMod_{\mathcal{E}}^{n}$ where $\mathcal{E}$ is an $\E_n$-monoidal category. The fiber of $\mathcal{M}$ in $\Alg_{\E_0}(\LMod^{n}_{\mathcal{E}})^{\simeq}$ consists of the pair $(\mathcal{M},M)$. A morphism between two pairs $(\mathcal{M},M)$ and $(\mathcal{M},M')$ is actually an identity functor on $\mathcal{M}$ with an equivalence $M\simeq M'$. So this fiber should be equivalent to $\mathcal{M}^{\simeq}$. 

Next applying \cite[Cor. 5.1.2.6]{HA} iteratively, there is a fully faithful functor given by $\LMod^n$,
$$\LMod^n:\Alg_{\E_n}(\mathcal{E})\longrightarrow \Alg_{\E_0}(\LMod^{n}_{\mathcal{E}})$$
and if we suppose $\mathcal{M}$ is in the image of $\LMod^n$, then the fiber in $\Alg_{\E_n}(\mathcal{E})$ should be a sub-$\infty$-groupoid of $\mathcal{M}^{\simeq}$, which consists of those $M\in \mathcal{M}$ such that $(\mathcal{M},M)$ is equivalent to $(\LMod_{E}^{n}, \LMod_{E}^{n-1})$ for some $\E_n$-algebra $E\in \mathcal{E}$. 

In classical Morita theory, two algebras $R$ and $R'$ are Morita equivalent i.e. $\LMod_R\simeq \LMod_{R'}$, if and only if there is a compact generator $N\in \LMod_{R'}$ such that $R\simeq \End_{R'}(N)$ and the equivalence will send $\End_{R'}(N)$ to $N$ (see e.g. \cite[Thm. 4.16, 4.20]{Sch04}). A similar theorem in the $\infty$-category case is proved in \cite[Thm. 7.1.2.1]{HA}. Here it just means $E$ is given by the $n$-fold endomorphism object of some compact generator $M\in\mathcal{M}$, which should satisfy that the unit in the $i$-fold endomorphism object is also a compact generator, because here it is a Morita equivalence by $\LMod^n$. So we have the following fiber sequences of $\infty$-groupoids
\[\begin{tikzcd}
	{\mathcal{M}^{\simeq}} & {\Alg_{\E_0}(\LMod^{n}_{\mathcal{E}})^{\simeq}} & {(\LMod_{\mathcal{E}}^{n})^{\simeq}} \\
	{\mathcal{M}^{\simeq}_{cg}} & {\Alg_{\E_n}(\mathcal{E})^{\simeq}} & {(\LMod_{\mathcal{E}}^{n})^{\simeq}}
	\arrow[from=1-1, to=1-2]
	\arrow["{\mathrm{forgetful}}", from=1-2, to=1-3]
	\arrow[hook, from=2-1, to=1-1]
	\arrow["{\End^{n}}"', from=2-1, to=2-2]
	\arrow["{\LMod^n}"', hook, from=2-2, to=1-2]
	\arrow["{\LMod^n}"', from=2-2, to=2-3]
	\arrow[equal,from=2-3, to=1-3]
\end{tikzcd}\]
where $\mathcal{M}^{\simeq}_{cg}$ is the sub-$\infty$-groupoid of $\mathcal{M}^{\simeq}$ consisting of compact generators. Note that although for two objects $M$ and $M'$, their endomorphism objects may be equivalent, the inducing equivalence between $(\mathcal{M},M)$ and $(\mathcal{M},M')$ can not lies over the identity functor of $\mathcal{M}$ if $M$ is not equivalent to $M'$.
\\

Now let $A$ be an Artin $\E_{n+2}$-algebra and then $\mathcal{E}=\LMod^{2}_{A}$ is an $\E_n$-monoidal category. We consider the fiber of $\E_{n}\mathrm{CatDef}_{\C}(A)\rightarrow \mathrm{CatDef}_{\LMod_{\C}^{n}}(A)$ at the trivial deformation $\mathcal{M}=A\otimes_k \LMod_{\C}^n$. It should be the $\infty$-groupoid consisting of $\E_n$-monoidal deformations $\D$ over $A$ i.e. $\D\in \Alg_{\E_n}(\LMod_{A}^2)$ such that $k\otimes_A\D\simeq \C$ and $\LMod^{n}_\D\simeq A\otimes_k \LMod_{\C}^n$. By arguments above, it is equivalent to the sub-$\infty$-groupoid of $(A\otimes_k \LMod_{\C}^n)^{\simeq}_{cg}$ consisting of deformations of $\LMod_{\C}^{n-1}$, which is actually $\mathrm{ObjDef}_{\LMod_{\C}^{n-1}\in \LMod_{\C}^n}(A)$ because every deformation $M\in A\otimes_k \LMod_{\C}^n$ of $\LMod_{\C}^{n-1}$ is a compact generator.

Since $A$ is an Artin algebra, $A\rightarrow k$ is a universal descent morphism. Suppose $\langle M\rangle$ is the subcategory of $A\otimes_k \LMod_{\C}^{n}$ generated by $M$ under colimits. Then $\langle M \rangle$ is equivalent to $\mathrm{Tot}(k^{\otimes(\bullet +1)}\otimes_A \langle M\rangle)$. The inclusion functor $\langle M \rangle\rightarrow A\otimes_k \LMod_{\C}^{n}$ induces the morphism 
$$k^{\otimes(\bullet +1)}\otimes_A \langle M\rangle\rightarrow k^{\otimes(\bullet +1)}\otimes_A A\otimes_k \LMod_{\C}^{n}$$
which is actually an equivalence because $k\otimes_A M\simeq \LMod_{\C}^{n-1}$. This proves $\langle M \rangle$ is equivalent to $A\otimes_k \LMod_{\C}^{n}$. Hence $M$ is a compact generator in $A\otimes_k \LMod_{\C}^{n}$. By a similar argument, we can see $\id \in\End_{A\otimes_k \LMod_{\C}^{n}}^{i}(M) $ is also a compact generator, so that $\mathrm{ObjDef}_{\LMod_{\C}^{n-1}\in \LMod_{\C}^n}(A)\subseteq (A\otimes_k \LMod_{\C}^n)^{\simeq}_{cg}$.
\\

Such a fiber sequence fits into the following commutative diagram
\[\begin{tikzcd}
	{\mathrm{ObjDef}_{\LMod_{\C}^{n-1}\in \LMod_{\C}^n}^{(n+2)}} & {\E_{n}\mathrm{CatDef}_{\C}} & {\mathrm{CatDef}_{\LMod_{\C}^{n}}} \\
	{\mathrm{ObjDef}_{\LMod_{\C}^{n-1}\in \LMod_{\C}^n}^{(n+2)}} & {\mathrm{SimDef}_{(\LMod_{\C}^n,\LMod_{\C}^{n-1})}} & {\mathrm{CatDef}_{\LMod_{\C}^{n}}}
	\arrow[from=1-1, to=1-2]
	\arrow[equal, from=1-1, to=2-1]
	\arrow[from=1-2, to=1-3]
	\arrow[from=1-2, to=2-2]
	\arrow[equal, from=1-3, to=2-3]
	\arrow[from=2-1, to=2-2]
	\arrow[from=2-2, to=2-3]
\end{tikzcd}\]
where the left and right vertical morphisms are equivalences. Passing to formal ${\E_{n+2}}$-moduli problems, this implies 
$$\E_n\mathrm{CatDef}_{\C}^\wedge\simeq \mathrm{SimDef}_{(\LMod_{\C}^n,\LMod_{\C}^{n-1})}^\wedge$$

And from Prop. \ref{center}, we know $HH^{*}_{\E_{n+1}}(\C)=\mathrm{End}_{\Z_{\E_n}(\C)}(1)$ is equivalent to $\End_{(n+1)\Pr^{L}_k}^{n+2}(\LMod_{\C}^n)$. It is also clear that $\End_{\LMod_{\C}^n}^{n+1}(\LMod_{\C}^{n-1})\simeq \End_\C(1)$ because $\LMod_{\C}^{n-1}$ is the unit in $\LMod_{\C}^n$. Therefore this proves $\E_n\mathrm{CatDef}_{\C}^\wedge$ is equivalent to 
$$\Map_{\Alg_{k}^{(n+2),\mathrm{aug}}}\Big(\mathscr{D}^{(n+2)}(-),k\oplus\mathrm{fib}\big(\mathrm{End}_{\Z_{\E_n}(\C)}(1)\rightarrow \mathrm{End}_{\C}(1)\big)\Big)$$
where $\mathscr{D}^{(n+2)}$ is the $\E_{n+2}$-Koszul duality functor.
\end{proof}

\begin{corollary}\label{algebra}
Suppose $A$ is an $\E_{n+1}$-algebra. Then the formal $\E_{n+2}$-moduli problem $\E_{n+1}\mathrm{AlgDef}_{A}^\wedge$ is equivalent to $\E_{n}\mathrm{CatDef}_{\LMod_{A}}^\wedge$.
\end{corollary}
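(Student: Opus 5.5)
Since two formal $\E_{n+2}$-moduli problems are equivalent as soon as their controlling non-unital $\E_{n+2}$-algebras (equivalently, their shifted tangent complexes) agree, we compare the two sides at the level of controlling algebras. On one side, Theorem \ref{2.6}, applied with $n+1$ in place of $n$, identifies $\E_{n+1}\mathrm{AlgDef}_{A}^\wedge$ with $\Psi\big(k\oplus \mathrm{fib}(\Z_{\E_{n+1}}(A)\rightarrow A)\big)$, where $\Psi:\Alg_{k}^{(n+2),\aug}\rightarrow \Moduli_{k}^{(n+2)}$. On the other side, $\LMod_A$ is an $\E_n$-monoidal stable $k$-linear $\infty$-category by \cite[Cor. 5.1.2.6]{HA}, and it is compactly generated by $A$. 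Theorem \ref{mainA} therefore identifies $\E_n\mathrm{CatDef}_{\LMod_A}^\wedge$ with $\Psi\big(k\oplus \mathrm{fib}(\End_{\Z_{\E_n}(\LMod_A)}(1)\rightarrow \End_{\LMod_A}(1))\big)$. It then suffices to produce an equivalence of fiber sequences of non-unital $\E_{n+2}$-algebras.

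For the target terms, the unit of $\LMod_A$ is $A$ itself, so $\End_{\LMod_A}(1)\simeq \End_{\LMod_A}(A)\simeq A$, with the $\E_{n+1}$ structure (hence $\E_{n+2}$ on the fiber) coming from $A$. For the source terms, Prop. \ref{center} (or directly \cite[Prop. 4.36]{Fra13}, $\Z_{\E_n}(\LMod_A)\simeq \Mod_{A}^{\E_{n+1}}$) gives
$$\End_{\Z_{\E_n}(\LMod_A)}(1)\simeq \End_{\Mod^{\E_{n+1}}_A}(A)=\Z_{\E_{n+1}}(A),$$
equivalently $\End^{n+2}(\LMod_A^{n})\simeq \End^{n+2}(A)$, because $\LMod^{n+1}_A\simeq \LMod^{n}_{\LMod_A}$ by construction of the iterated $\LMod$ functor.

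The main obstacle is compatibility of the two maps: we must check that the map $\End_{\Z_{\E_n}(\LMod_A)}(1)\rightarrow \End_{\LMod_A}(1)$ corresponds, under these identifications, to the canonical map $\Z_{\E_{n+1}}(A)\rightarrow A$ (evaluation at the unit), as maps of $\E_{n+2}$-algebras. Our first attempt is to avoid computing it directly and argue through the proof of Theorem \ref{mainA}. There, $\E_n\mathrm{CatDef}^\wedge_{\LMod_A}\simeq \mathrm{SimDef}^\wedge_{(\LMod^{n+1}_A,\LMod^{n}_A)}$, and this simultaneous deformation problem sits in a fiber sequence with $\mathrm{ObjDef}$ and $\mathrm{CatDef}$ whose maps are induced by the forgetful functors. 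The analogous Francis fiber sequence $A[-1]\rightarrow \T_{\E_{n+1}\mathrm{AlgDef}^\wedge_A}[-n-2]\rightarrow \Z_{\E_{n+1}}(A)$ \cite[Cor. 4.38]{Fra13} arises from the same forgetful maps once $A$ is encoded as the pointed object $(\LMod^{n+1}_A,\LMod^{n}_A)$.

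Alternatively, we would construct a direct natural transformation $\E_{n+1}\mathrm{AlgDef}_A\rightarrow \E_n\mathrm{CatDef}_{\LMod_A}$, $A_B\mapsto \LMod_{A_B}$, which is well defined because $\LMod$ is symmetric monoidal and commutes with base change. We would then show that it induces an equivalence on tangent complexes using the identifications above, and conclude by \cite[Prop. 12.2.2.6]{SAG} after passing to $L$.
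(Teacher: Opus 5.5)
There is a genuine gap: the proposal never constructs or verifies the comparison map, and the input it relies on is circular.

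\textbf{Circularity and the missing map.} Your reduction runs the paper's logic backwards. The formula $\E_{n+1}\mathrm{AlgDef}^\wedge_A\simeq\Psi\big(k\oplus\mathrm{fib}(\Z_{\E_{n+1}}(A)\rightarrow A)\big)$ from Theorem \ref{2.6} is, in the paper, \emph{deduced} from this corollary, together with Theorem \ref{mainA} and \cite[Cor. 4.38]{Fra13}. So you cannot use it as an input.

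Even granting it, the termwise identifications $\End_{\Z_{\E_n}(\LMod_A)}(1)\simeq\Z_{\E_{n+1}}(A)$ and $\End_{\LMod_A}(1)\simeq A$ do not give what you need. $A$ is only an $\E_{n+1}$-algebra, so the non-unital $\E_{n+2}$-structure on the fiber is not determined by the two terms and a map between them. It exists only as the structure on the shifted tangent complex of a formal $\E_{n+2}$-moduli problem. The comparison therefore has to come from an actual map of moduli problems, and the compatibility you single out as the main obstacle is the entire content of the corollary.

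Neither of your workarounds supplies that map. The first asserts, without argument, that Francis's fiber sequence ``arises from the same forgetful maps''. The second constructs $A_B\mapsto\LMod_{A_B}$, but then checks it on tangent complexes ``using the identifications above''. Those identifications only show that source and target are abstractly equivalent, not that this particular map is an equivalence.

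\textbf{The missing step.} Rerun the fiber-sequence argument from the proof of Theorem \ref{mainA}, with $\E_{n+1}$-algebras in place of $\E_n$-monoidal categories.

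For an Artin $\E_{n+2}$-algebra $B$, iterating \cite[Cor. 5.1.2.6]{HA} gives a fully faithful functor $\LMod^{n+1}:\Alg_{\E_{n+1}}(\LMod_B)\rightarrow\Alg_{\E_0}(\LMod^{n+1}_{\LMod_B})$. So the fiber of $\E_{n+1}\mathrm{AlgDef}_A(B)\rightarrow\mathrm{CatDef}_{\LMod^{n+1}_A}(B)$ over the trivial deformation consists of the deformations of $\LMod^n_A$ that are compact generators, in the iterated sense used there. The descent argument along $B\rightarrow k$ shows this is all of $\mathrm{ObjDef}_{\LMod^{n}_A\in\LMod^{n+1}_A}(B)$.

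This yields a map from the fiber sequence
$$\mathrm{ObjDef}^{(n+2)}_{\LMod^n_A\in\LMod^{n+1}_A}\rightarrow\E_{n+1}\mathrm{AlgDef}_A\rightarrow\mathrm{CatDef}_{\LMod^{n+1}_A}$$
into the $\mathrm{SimDef}_{(\LMod^{n+1}_A,\LMod^n_A)}$ sequence, which is the identity on both ends. Hence $\E_{n+1}\mathrm{AlgDef}^\wedge_A\simeq\mathrm{SimDef}^\wedge_{(\LMod^{n+1}_A,\LMod^n_A)}$.

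Since $\LMod^n_{\LMod_A}\simeq\LMod^{n+1}_A$ and $\LMod^{n-1}_{\LMod_A}\simeq\LMod^n_A$, the proof of Theorem \ref{mainA} identifies $\E_n\mathrm{CatDef}^\wedge_{\LMod_A}$ with the same simultaneous deformation problem. This is the paper's one-line proof. It needs no controlling algebras, no comparison of maps, and no appeal to Theorem \ref{2.6}, which then becomes a consequence.

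Your transformation $A_B\mapsto\LMod_{A_B}$ can be made to work, but only by observing that it is compatible with these two fiber sequences (because $\LMod^n_{\LMod_{A_B}}\simeq\LMod^{n+1}_{A_B}$). That is the same argument.
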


\begin{proof}
Both of them are equivalent to $\mathrm{SimDef}^{\wedge}_{(\LMod_{A}^{n+1},\LMod_{A}^{n})}$.
\end{proof}

\subsection{$n$-Shifted Deformation Quantization}
Let $X$ be a perfect stack. in \cite[Cor. 5.12]{BFN10} the authors show $\Z_{\E_n}(\QCoh(X))$ the $\E_{n}$-center of $\QCoh(X)$ is equivalent to $\QCoh(\mathcal{L}^{(n)}X)$ as an $\E_{n+1}$-monoidal category, where $\mathcal{L}^{(n)}X:=\Mapp(S^n,X)$ is the \textit{$n$-dimensional derived loop stack}. Let $j:X\rightarrow \mathcal{L}^{(n)}X$ be the constant loop morphism. Then the $\E_{n+1}$-unit in $\QCoh(\mathcal{L}^{(n)}X)$ will be $j_*(\mathcal{O}_X)$.

\begin{definition}
    \normalfont
Let $X$ be a derived stack. The \textit{$\E_{n+1}$-Hochschild cohomology} is defined to be $HH_{\E_{n+1}}^*(X):=HH_{\E_{n+1}}^*(\QCoh(X))=\End_{\Z_{\E_n}(\QCoh(X))}(1)$.
\end{definition}

So when $X$ is a perfect stack, we have an equivalence $HH_{\E_{n+1}}^*(X)\simeq \End_{\QCoh(\mathcal{L}^{(n)}X)}(j_*(\mathcal{O}_X))$. Now let $X=[Y/G]$ be a quotient stack where $Y$ is a quasi-projective derived scheme of finite presentation and $G$ is a smooth linear algebraic group. In \cite[Sec. 5]{To13}, if we replace the $\E_{n+1}$-operad by $\mathbb{P}_{n+1}$-operad, we will get 
$$HH^*_{\mathbb{P}_{n+1}}(X)=\End_{\QCoh(\mathcal{L}_{f}^{(n)}X)}(\hat{j}_*(\mathcal{O}_X))$$
where $\mathcal{L}_{f}^{(n)}X$ is the formal completion of $j:X\rightarrow \mathcal{L}^{(n)}X$ and $\hat{j}:X\rightarrow \mathcal{L}_{f}^{(n)}X$.
The HKR theorem implies 
$$HH^*_{\mathbb{P}_{n+1}}(X)\simeq \Gamma(X,Sym_{\mathcal{O}_X}(\T_X[-n-1]))$$
It is proved in \cite[Cor. 5.4]{To13} that $HH_{\mathbb{P}_{n+1}}^*(X)$ and $HH^{*}_{\E_{n+1}}(X)$ are equivalent in this case.

\begin{theorem}[Higher Formality]
Let $X=[Y/G]$ be a quotient stack such that $Y$ is a quasi-projective derived scheme of finite presentation and $G$ is a smooth linear algebraic group. Then for $n>0$, a choice of an equivalence of operad $\E_{n+1}\simeq \mathbb{P}_{n+1}$ will induce an equivalence of dgLa   
$$HH^{*}_{\E_{n+1}}(X)[n+1]\simeq HH^{*}_{\mathbb{P}_{n+1}}(X)[n+1]\simeq \Gamma(X,Sym_{\mathcal{O}_X}(\T_X[-n-1]))[n+1]$$
\end{theorem}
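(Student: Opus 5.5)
The plan is to chain together the three equivalences already set up just before the statement. The first step identifies $HH^{*}_{\E_{n+1}}(X)$ with $\End_{\QCoh(\mathcal{L}^{(n)}X)}(j_*\mathcal{O}_X)$. Since $X=[Y/G]$ with $Y$ quasi-projective of finite presentation and $G$ a smooth linear algebraic group, $X$ is a perfect stack by \cite[Cor. 3.22]{BFN10}. Then \cite[Cor. 5.12]{BFN10} identifies $\Z_{\E_n}(\QCoh(X))$ with $\QCoh(\mathcal{L}^{(n)}X)$ as an $\E_{n+1}$-monoidal category. Under this identification the unit is $j_*\mathcal{O}_X$, so the $\E_{n+2}$-algebra $\End_{\Z_{\E_n}(\QCoh(X))}(1)$ becomes this endomorphism algebra.

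The second step passes from $\mathcal{L}^{(n)}X$ to its formal completion $\mathcal{L}^{(n)}_fX$ along $j$. Because $j_*\mathcal{O}_X$ is set-theoretically supported on the constant loops, I would argue that
$$\End_{\QCoh(\mathcal{L}^{(n)}X)}(j_*\mathcal{O}_X)\simeq \End_{\QCoh(\mathcal{L}^{(n)}_fX)}(\hat j_*\mathcal{O}_X).$$
The tool is the adjunction $\hat j^*\dashv \hat j_*$ together with the fact that pulling back along the completion map preserves the relevant mapping spectra for objects set-theoretically supported on $X$.

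The third step compares the $\E_{n+1}$ and $\mathbb{P}_{n+1}$ versions. Fixing an equivalence of operads $\E_{n+1}\simeq\mathbb{P}_{n+1}$, which exists in characteristic $0$ by formality of the little disks operad for $n+1\geq 2$, transports the construction of $HH^*_{\E_{n+1}}$ to that of $HH^*_{\mathbb{P}_{n+1}}$. This is exactly \cite[Cor. 5.4]{To13}, and it yields an equivalence of $\E_{n+2}$-algebras, hence of $\mathbb{P}_{n+2}$-algebras. Shifting by $n+1$ extracts the underlying dgLa, since for an $\E_{n+2}$-algebra $R$ the shift $R[n+1]$ carries a Lie structure, as in the formula $\g\simeq R[n-1]$ of Lurie's theorem applied to $\E_{n+2}$.

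The last step is the HKR identification. For the formally completed loop stack, $\mathcal{L}_f^{(n)}X$ is the total space of the shifted tangent complex, the formal neighbourhood of the zero section in $\T_X[-n]$. Consequently $\End(\hat j_*\mathcal{O}_X)$ is computed by Koszul duality as $\Gamma(X,Sym_{\mathcal{O}_X}(\T_X[-n-1]))$. The $\mathbb{P}_{n+2}$-bracket is the shifted Schouten bracket, so the dgLa structure matches after the shift $[n+1]$.

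I expect the main obstacle to be the bracket compatibility in this last step: the HKR map must be shown to be an equivalence of dgLas and not merely of complexes. For a general $[Y/G]$ this needs the smoothness and linearity assumptions, via descent along $Y\to X$ and an equivariant HKR on $Y$. My approach would be to deduce it formally from the $\mathbb{P}_{n+2}$-structure: Koszul duality of the shifted abelian Lie algebroid $\T_X[-n-1]$ produces the Schouten bracket directly. The restriction $n>0$ is what guarantees the formality equivalence $\E_{n+1}\simeq\mathbb{P}_{n+1}$ used in the third step, since the statement is false for $n=0$ without Kontsevich's deformed map.
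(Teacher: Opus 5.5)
Your outline follows the paper's route. The paper gives no argument beyond assembling three citations: \cite[Cor. 5.12]{BFN10}, which gives $HH^*_{\E_{n+1}}(X)\simeq\End_{\QCoh(\mathcal{L}^{(n)}X)}(j_*\mathcal{O}_X)$ for the perfect stack $[Y/G]$; To\"en's formally completed $\mathbb{P}_{n+1}$-version together with HKR; and \cite[Cor. 5.4]{To13} for the comparison. Your steps 1--3 and the complex-level part of step 4 are consistent with this.

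The gap is in the step you yourself flag as the main obstacle, bracket compatibility. The argument you propose there would not work. Koszul duality for the zero section $X\to\widehat{\T_X[-n]}$, i.e.\ for $\T_X[-n-1]$ viewed as an abelian Lie algebroid, is an $\mathcal{O}_X$-linear construction. It recovers $Sym_{\mathcal{O}_X}(\T_X[-n-1])$ as a commutative $\mathcal{O}_X$-algebra, but with zero bracket. The Schouten bracket is not $\mathcal{O}_X$-linear: $[v,f]=v(f)$ for a vector field $v$ and a function $f$, and $[v,w]$ is the commutator of vector fields. So the bracket has to come from the convolution $\E_{n+1}$-monoidal structure on $\QCoh(\mathcal{L}^{(n)}X)$, which is not linear over $X$.

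Nor can you ``deduce it formally from the $\mathbb{P}_{n+2}$-structure''. On $\End(\hat{j}_*\mathcal{O}_X)$ that structure exists only after transporting the $\E_{n+2}$-structure along a chosen formality equivalence. Identifying the result with the Schouten bracket is precisely the content of To\"en's higher formality, so the argument is circular.

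The alternative you mention, descent along $Y\to X$ plus an HKR theorem on $Y$, also fails if it means computing on $Y$ and descending, because Hochschild cohomology does not descend along smooth covers. Already for $Y=\pt$ and $X=BG$ (the case used in the rest of the paper), $HH^*_{\E_{n+1}}(\pt)=k$, whereas $HH^*_{\E_{n+1}}(BG)\simeq Sym_k(\g[-n])^G$. The discrepancy comes from the term $\g\otimes\mathcal{O}_Y$ in degree $-1$ of $\T_{[Y/G]}$, which no computation on $Y$ alone sees.

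So the dgLa-level identification cannot be supplied by your steps 2 and 4. It must be imported from \cite[Sec. 5]{To13} together with Cor. 5.4, as the paper does. The hypotheses on $[Y/G]$ serve to make $X$ perfect, so that your first step applies, and to put $X$ within the scope of To\"en's result. With that replacement your outline coincides with the paper's.
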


\begin{corollary}
Let $X=[Y/G]$ be a quotient stack such that $Y$ is a quasi-projective derived scheme of finite presentation and $G$ is a smooth linear algebraic group. Then for $n>0$, the $\E_n$-monoidal formal moduli problem $\E_{n}\mathrm{CatDef}_{\QCoh(X)}^{\wedge}$ is equivalent to 
$$\Map_{\Alg_{k}^{(n+2),\mathrm{aug}}}\Big(\mathscr{D}^{(n+2)}(-),k\oplus\mathrm{fib}\big(\Gamma(X,Sym_{\mathcal{O}_X}(\T_X[-n-1]))\rightarrow \Gamma(X,\mathcal{O}_X)\big)\Big)$$
\end{corollary}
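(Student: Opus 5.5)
The corollary follows by combining Theorem \ref{mainA} with the Higher Formality theorem stated just above. By Theorem \ref{mainA}, $\E_n\mathrm{CatDef}_{\QCoh(X)}^{\wedge}$ is $\Psi\big(k\oplus R\big)$ with
$$R=\mathrm{fib}\big(HH^*_{\E_{n+1}}(X)\rightarrow \End_{\QCoh(X)}(\mathcal{O}_X)\big),$$
where $\Psi:\Alg_{k}^{(n+2),\aug}\rightarrow \Moduli_{k}^{(n+2)}$ and $HH^*_{\E_{n+1}}(X)=\End_{\Z_{\E_n}(\QCoh(X))}(1)$. We have $\End_{\QCoh(X)}(\mathcal{O}_X)\simeq\Gamma(X,\mathcal{O}_X)$. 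So it suffices to identify the map $HH^*_{\E_{n+1}}(X)\rightarrow \Gamma(X,\mathcal{O}_X)$ with the projection $\Gamma(X,Sym_{\mathcal{O}_X}(\T_X[-n-1]))\rightarrow \Gamma(X,\mathcal{O}_X)$ onto the $Sym^0$ summand, as a map of $\E_{n+2}$-algebras. Then the fibers agree as non-unital $\E_{n+2}$-algebras and $\Psi$ gives the claim.

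\textbf{Identifying the map geometrically.} $X$ is perfect by \cite[Cor. 3.22]{BFN10}, so \cite[Cor. 5.12]{BFN10} gives $\Z_{\E_n}(\QCoh(X))\simeq\QCoh(\mathcal{L}^{(n)}X)$ with unit $j_*\mathcal{O}_X$. Thus $HH^*_{\E_{n+1}}(X)\simeq\End(j_*\mathcal{O}_X)$. The map to $\End_{\QCoh(X)}(\mathcal{O}_X)$ comes from the $\E_n$-monoidal forgetful functor $\Z_{\E_n}(\QCoh(X))\rightarrow\QCoh(X)$, which is $j^*$ (restriction along constant loops). It sends $j_*\mathcal{O}_X$ to $j^*j_*\mathcal{O}_X$, and the map is composition with the counit $j^*j_*\mathcal{O}_X\rightarrow\mathcal{O}_X$, equivalently precomposition with the unit $\mathcal{O}\rightarrow j_*\mathcal{O}_X$ in $\End(j_*\mathcal{O}_X)\simeq\Map(j^*j_*\mathcal{O}_X,\mathcal{O}_X)$.

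\textbf{Formal completion and HKR.} Since $j_*\mathcal{O}_X$ is supported on the constant loops, this endomorphism algebra can be computed on $\mathcal{L}^{(n)}_fX$. This matches Toën's $HH^*_{\mathbb{P}_{n+1}}(X)$ after the chosen operad equivalence $\E_{n+1}\simeq\mathbb{P}_{n+1}$, i.e. the Higher Formality theorem. Under HKR, $j^*j_*\mathcal{O}_X\simeq Sym_{\mathcal{O}_X}(\mathbb{L}_X[n])$, so the unit/counit map is the inclusion of $Sym^0$ and the induced map on endomorphisms is projection to $\Gamma(X,\mathcal{O}_X)$.

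\textbf{Main obstacle: multiplicativity.} Higher Formality is stated only as an equivalence of dgLas (the $[n+1]$-shifted brackets). We need the augmentation map compatible with the $\E_{n+2}$-structures, or at least enough to identify the fibers as objects controlling the moduli problem. I would argue in two steps.

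First, the equivalence is natural in the formal completion. Toën's construction identifies the whole $\E_{n+1}$-monoidal category $\QCoh(\mathcal{L}^{(n)}_fX)$ with its $\mathbb{P}_{n+1}$-analogue compatibly with restriction to $X$. Hence the maps to $\Gamma(X,\mathcal{O}_X)$, which come from the monoidal functor $\hat j^*$, correspond.

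Second, if only the Lie-level statement is available, I fall back on the commutative restriction. By the commutative square relating $\Alg^{(n+2),\aug}_k$ and $\mathbf{dgLa}_k$, it suffices to identify the fibers as dgLas after shifting by $[n+1]$. A map of formal moduli problems inducing an equivalence on tangent complexes is an equivalence \cite[Prop. 12.2.2.6]{SAG}. The tangent complexes are the fibers shifted by $n+2$, and these are identified by the dgLa equivalence together with the compatible projection to $\Gamma(X,\mathcal{O}_X)$ (on which the bracket is trivial). The remaining care is checking that projection really is a dgLa map under formality, which holds because $\mathcal{O}_X$ is the weight-zero part and the Schouten bracket raises weight.
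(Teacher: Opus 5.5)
Your core argument is the same as the paper's. The corollary is obtained by substituting the Higher Formality equivalence $HH^*_{\E_{n+1}}(X)\simeq\Gamma(X,\mathrm{Sym}_{\mathcal{O}_X}(\T_X[-n-1]))$ and $\End_{\QCoh(X)}(\mathcal{O}_X)\simeq\Gamma(X,\mathcal{O}_X)$ into Theorem \ref{mainA}, and the paper gives no further argument.

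The problem is the extra step you call the main obstacle, because its premise is false. Neither the map $HH^*_{\E_{n+1}}(X)\to\End_{\QCoh(X)}(\mathcal{O}_X)$ nor the projection onto $\mathrm{Sym}^0$ respects the $\E_{n+2}$-structures, not even the $[n+1]$-shifted brackets. The Schouten bracket satisfies $[\mathrm{Sym}^p,\mathrm{Sym}^q]\subseteq\mathrm{Sym}^{p+q-1}$, so it lowers weight rather than raising it. In particular $[\xi,f]=\xi(f)$ lands in $\mathrm{Sym}^0$. Take $X=\mathbb{A}^1$, where $[\partial_x,x]=1$. Any bracket on $\Gamma(X,\mathcal{O}_X)[n+1]$ making the projection $p$ a Lie map must vanish, since functions bracket to zero and $p$ is the identity on $\mathrm{Sym}^0$. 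That would force $p([\partial_x,x])=0$, yet $p(1)=1$.

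The non-unital $\E_{n+2}$-structure on $R$ is therefore not the fiber of a map of $\E_{n+2}$-algebras. In the proof of Theorem \ref{mainA} it is the tangent algebra of $\mathrm{SimDef}^{\wedge}$. The multiplicative map is $R\to HH^*_{\E_{n+1}}(X)$, induced by $\mathrm{SimDef}^{\wedge}\to\mathrm{CatDef}^{\wedge}$, and $\End_{\QCoh(X)}(\mathcal{O}_X)$ is only its cofiber as a complex. On the polyvector side the natural counterpart is $\Gamma(X,\mathrm{Sym}^{\geq 1}_{\mathcal{O}_X}(\T_X[-n-1]))$. It is closed under product and bracket but is not a Lie ideal. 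What has to be checked is that formality carries $R\to HH^*_{\E_{n+1}}(X)$ to this inclusion, not that the projection is multiplicative.

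Your fallback does not fix this. A dgLa equivalence between the fibers identifies the two functors only after restriction along $\CAlg^{\Art}_k\to\Alg^{(n+2),\Art}_k$. It does not lift to an equivalence of $\E_{n+2}$-algebras, and it does not give a map of formal $\E_{n+2}$-moduli problems. \cite[Prop. 12.2.2.6]{SAG} applies only to a given map that induces an equivalence on tangent complexes. So that route proves only the commutative restriction, not the statement on $\Alg^{(n+2),\Art}_k$.

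There are also two smaller slips.
\begin{itemize}
\item The forgetful functor $\Z_{\E_n}(\QCoh(X))\to\QCoh(X)$ is evaluation at the unit, i.e. pushforward along the evaluation map $\mathcal{L}^{(n)}X\to X$. It is not $j^*$, which sends the unit $j_*\mathcal{O}_X$ to $j^*j_*\mathcal{O}_X\not\simeq\mathcal{O}_X$. The map of complexes you finally write, precomposition with $\mathcal{O}_{\mathcal{L}^{(n)}X}\to j_*\mathcal{O}_X$, is nevertheless the correct one.
\item HKR identifies $j^*j_*\mathcal{O}_X$ with $\mathrm{Sym}_{\mathcal{O}_X}(\mathbb{L}_X[n+1])$, not $\mathrm{Sym}_{\mathcal{O}_X}(\mathbb{L}_X[n])$. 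It computes functions on $X\times_{\mathcal{L}^{(n)}X}X\simeq\mathcal{L}^{(n+1)}X$, and only the $[n+1]$ version dualizes to $\mathrm{Sym}(\T_X[-n-1])$.
\end{itemize}
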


In derived algebraic geometry, there is also a concept generalizing the usual notion of Poisson structure. 

\begin{definition}
	\normalfont
Let $X$ be a derived Artin stack locally of finite presentation. Following \cite[Sec. 3.1]{CPTVV}, \textit{the space of $n$-shifted polyvector fields of $X$} is 
$$\mathsf{Pol}(X,n)\simeq \Gamma(X,Sym_{\mathcal{O}_X}(\T_X[-n-1]))=\bigoplus_{i\geq 0}\Gamma(X,Sym^{i}_{\mathcal{O}_X}(\T_X[-n-1]))$$
The \textit{space of $n$-shifted Poisson structures} is defined to be 
$$\mathsf{Poiss}(X,n):=\Map_{\mathbf{dgLa}_{k}^{gr}}(k(2)[-1],\mathsf{Pol}(X,n)[n+1])$$
and an \textit{$n$-shifted Poisson structure} $\beta$ is an element in $\pi_0\mathsf{Poiss}(X,n)$.
\end{definition}

Here $k(2)[-1]$ is the graded dgLa with zero bracket, which is $k$ at weight $2$ and cohomological degree $1$. Then from \cite[Sec. 4.4.2]{To14}, an $n$-shifted Poisson structure $\beta$ is a family of elements $\{\beta_i\}_{i\geq 1}$ such that $\beta_i$ is an element in $\Gamma(X,Sym^{i+1}_{\mathcal{O}_X}(\T_X[-n-1])$ at cohomological degree $n+2$ satisfying 
$$d(\beta_i)+\frac{1}{2}\sum_{a+b=i}[\beta_a,\beta_b]=0$$
where $d$ is the internal differential. So that $\beta$ is determined by a solution $\sum_{i\geq 1}\beta_i\cdot\hbar^i$ of the Mauer-Cartan equation in $\Gamma(X,Sym_{\mathcal{O}_X}(\T_X[-n-1]))[n+1]\otimes \hbar\cdot k[[\hbar]]$.

\begin{example}
    \normalfont
Let $X=BG$ where $G$ is a reductive group. We know $\T_{BG}\simeq \g[1]$ where $\g$ is the tangent space at the unit $e:\Spec k\rightarrow G$. Then
$$\mathsf{Pol}(BG,n)\simeq \Gamma(BG,Sym_{\mathcal{O}_{BG}}(\g[-n]))\simeq Sym_k (\g[-n])^G$$
and 
$$\pi_0\mathsf{Poiss}(BG,n)\simeq 
\left\{\begin{array}{cc}
  (\wedge^3 \g)^G,   &n=1  \\
  (Sym^2\g)^G,   &n=2\\
  0, & n\ne 1, 2
\end{array} \right.
$$
\end{example}

\begin{theorem}[Existence]
Let $X=[Y/G]$ be a quotient stack where $Y$ is a derived affine scheme and $G$ is a reductive group. Then for $n>0$, every $n$-shifted Poisson structure on $X$ will induce an $\E_n$-monoidal formal deformation of $\QCoh(X)$ inside $\E_{n}\mathrm{CatDef}_{\QCoh(X)}^{\wedge}(k[[x]])$.
\end{theorem}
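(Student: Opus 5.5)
The plan is to realize the Poisson structure as a $k[[x]]$-point of the formal moduli problem $\E_{n}\mathrm{CatDef}_{\QCoh(X)}^{\wedge}$, using the Corollary above that identifies it with the formal moduli problem controlled by
$$R:=\mathrm{fib}\big(\Gamma(X,Sym_{\mathcal{O}_X}(\T_X[-n-1]))\rightarrow \Gamma(X,\mathcal{O}_X)\big).$$
First I check the hypotheses. For $X=[Y/G]$ with $Y$ derived affine and $G$ reductive, $Y$ is quasi-projective and $G$ is smooth linear, so the Higher Formality theorem and the preceding Corollary apply. Passing to commutative test algebras, $\E_{n}\mathrm{CatDef}_{\QCoh(X)}^{\wedge}$ restricted to $\CAlg_k^{\Art}$ becomes $\Map_{\mathbf{dgLa}_k}(\mathscr{D}(-),R[n+1])$ by Lurie's theorem. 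I then extend it to the pro-Artin algebra $k[[x]]=\lim k[x]/x^m$ by the limit. Through the Maurer--Cartan model, its $\pi_0$ is
$$\lim_m \mathrm{MC}\big(R[n+1]\otimes x\,k[x]/x^m\big)/\sim,$$
which agrees with $\mathrm{MC}(R[n+1]\otimes x\,k[[x]])/\sim$ up to the usual $\lim^1$ issue. For existence a compatible system suffices, so I can ignore that issue.

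Next I produce the Maurer--Cartan element. As recalled above from \cite{To14}, an $n$-shifted Poisson structure $\beta$ gives a solution $\sum_{i\ge1}\beta_i x^i$ of the Maurer--Cartan equation in $\mathsf{Pol}(X,n)[n+1]\otimes x\,k[[x]]$. Each $\beta_i$ lies in weight $i+1\ge 2$, so it has no component in $Sym^0=\Gamma(X,\mathcal{O}_X)$.

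The key step is to lift this element from the full polyvector dgLa to the fiber $R[n+1]$. Since $\mathsf{Pol}(X,n)=\bigoplus_i Sym^i$ splits by weight, the projection $\mathsf{Pol}(X,n)\to\Gamma(X,\mathcal{O}_X)$ onto weight $0$ is split at the level of complexes. Therefore $R\simeq\bigoplus_{i\ge1}\Gamma(X,Sym^i(\T_X[-n-1]))$ is the positive-weight part. The bracket raises weight by $-1$ from a weight $a+b$ pair, and for $a,b\ge 2$ the result has weight $a+b-1\ge 3$. Hence $\mathsf{Pol}_{\ge2}$ is a sub-dgLa, and it sits inside the fiber, so $\sum\beta_ix^i$ defines an element of $\mathrm{MC}(R[n+1]\otimes x\,k[[x]])$.

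The main obstacle is that the equivalence of the Corollary is only an equivalence of non-unital $\E_{n+2}$-algebras, or of dgLas after shifting, transported through the formality equivalence $\E_{n+1}\simeq\mathbb{P}_{n+1}$. It is not compatible on the nose with the weight splitting. I would handle this by working at the $\infty$-categorical level instead. The map $\mathsf{Pol}_{\ge 2}[n+1]\to R[n+1]\to\mathsf{Pol}(X,n)[n+1]$ is a map of dgLas, with the second map the fiber inclusion, which is a dgLa map because the augmentation to $\Gamma(X,\mathcal{O}_X)$ is one. Then $\beta$, viewed as a graded dgLa map $k(2)[-1]\to\mathsf{Pol}[n+1]$ landing in weights $\ge2$, factors through $R[n+1]$. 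Forgetting the grading and using $\mathscr{D}(k[[x]])$ as the completed version of the graded Koszul dual gives a point of $\Map_{\mathbf{dgLa}_k}(\mathscr{D}(k[[x]]),R[n+1])$. Composing with the formality equivalence then yields the required point of $\E_{n}\mathrm{CatDef}_{\QCoh(X)}^{\wedge}(k[[x]])$.
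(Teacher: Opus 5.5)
Your key lifting step has a genuine gap. You need a dgLa map $\mathsf{Pol}_{\ge 2}[n+1]\to R[n+1]$ that lifts the inclusion into $\mathsf{Pol}(X,n)[n+1]$. You get it by treating $R[n+1]$ as the fiber of the weight-zero augmentation, which you say is a dgLa map. It is not. The Schouten bracket sends weights $(1,0)$ to weight $0$ (a vector field differentiating a function). So $\mathsf{Pol}_{\ge1}$ is a sub-dgLa but in general not an ideal, and the projection to $\Gamma(X,\mathcal{O}_X)[n+1]$ cannot be a Lie map, since its kernel would have to be an ideal.

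The Lie structure on $R[n+1]$ and its map to $HH^*_{\E_{n+1}}(X)[n+1]$ actually come from $\mathrm{SimDef}^\wedge\to\mathrm{CatDef}^\wedge$. That is, they come from a fiber sequence of dgLas $\Gamma(X,\mathcal{O}_X)[n]\to R[n+1]\to HH^*_{\E_{n+1}}(X)[n+1]$. Nothing identifies this map, after transport along the formality equivalence (which, as you note, ignores weights), with $\mathsf{Pol}_{\ge1}[n+1]\hookrightarrow\mathsf{Pol}(X,n)[n+1]$. Your ``$\infty$-categorical'' workaround simply asserts the factorization that needs proof.

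A telling sign is that you use affineness of $Y$ and reductivity of $G$ only to invoke formality. In general, lifting $k[[x]]$-points from the $HH^*_{\E_{n+1}}(X)$-problem to the $R$-problem is obstructed by positive-degree cohomology of $\Gamma(X,\mathcal{O}_X)$. Already at first order, the cokernel of $H^{n+2}(R)\to H^{n+2}(HH^*_{\E_{n+1}}(X))$ embeds in $H^{n+2}(\Gamma(X,\mathcal{O}_X))$.

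The missing idea is the paper's Lemma \ref{compare}, which removes the need to lift anything through a model of $R$. Since $\Gamma(X,\mathcal{O}_X)=\Gamma(Y,\mathcal{O}_Y)^G$ is concentrated in non-positive cohomological degrees, the map $R\to HH^*_{\E_{n+1}}(X)$ is an isomorphism on $H^{n+1},H^{n+2},H^{n+3}$ and surjective on $H^n$. This is exactly where affineness and reductivity enter. Inducting along $k[x]/x^{m+1}\simeq k[x]/x^m\times_{k\oplus k[1]}k$ with the five lemma, and passing to the limit via the Milnor sequence, gives a bijection \[\pi_0\E_n\mathrm{CatDef}^{\wedge}_{\QCoh(X)}(k[[x]])\cong\pi_0\Map_{\Alg_k^{(n+2),\mathrm{aug}}}\big(\mathscr{D}^{(n+2)}(k[[x]]),k\oplus HH^*_{\E_{n+1}}(X)\big).\] By higher formality the right side is $\mathrm{MC}(\mathsf{Pol}(X,n)[n+1]\otimes x\,k[[x]])/\sim$, so $\sum_i\beta_ix^i$ gives a point directly.

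To rescue your route, you would have to prove that $R[n+1]\to HH^*_{\E_{n+1}}(X)[n+1]$ corresponds under formality to the inclusion of $\mathsf{Pol}_{\ge1}[n+1]$. That would give a stronger statement with no degree hypothesis, but it is a substantial compatibility that neither you nor the paper establishes.
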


\begin{proof}
Because here our $Y$ is a derived affine scheme and $G$ is reductive, the $G$-equivariant global section $\Gamma(X,\mathcal{O}_X)=\Gamma(Y,\mathcal{O}_Y)^G$ will take values in non-positively graded cdgas. This implies 
\begin{align*}
\pi_0\E_{n}\mathrm{CatDef}_{\QCoh(X)}^{\wedge}(k[[x]])&\simeq \pi_0\Map_{\Alg_{k}^{(n+2),\mathrm{aug}}}\Big(\mathscr{D}^{(n+2)}(k[[x]]),k\oplus\mathrm{fib}\big(HH^*_{\E_{n+1}}(X)\rightarrow \Gamma(X,\mathcal{O}_X)\big)\Big)\\
&\simeq \pi_0\Map_{\Alg_{k}^{(n+2),\aug}}\big(\mathscr{D}^{(n+2)}(k[[x]]),k\oplus HH^*_{\E_{n+1}}(X)\big),\ \text{by Lemma \ref{compare}}\\
&\simeq \pi_0\Map_{\mathbf{dgLa}_k}(\mathscr{D}(k[[x]]), HH^{*}_{\E_{n+1}}(X)[n+1])\\
&\simeq \pi_0\Map_{\mathbf{dgLa}_k}(\mathscr{D}(k[[x]]), \Gamma(X,Sym_{\mathcal{O}_X}(\T_X[-n-1]))[n+1])\\
&\simeq \mathrm{MC}\big( \Gamma(X,Sym_{\mathcal{O}_X}(\T_X[-n-1]))[n+1]\otimes x\cdot k[[x]]\big)/\sim
\end{align*}
\end{proof}

Because for an $\E_n$-monoidal stable $k$-linear $\infty$-category $\C$, the naive deformation problem $\E_n\mathrm{CatDef}_{\C}^c$ is only $2$-proximate, it seems the $n$-shifted deformation quantization may give a \textit{curved deformation}, which does not lie in the naive deformation problem. But in some good case, this problem does not occur.

\subsection{Applications to $BG$}
Let $G$ be a reductive group. We know $\Rep(G)\simeq \QCoh(BG)$ and $\T_{BG}\simeq \g[1]$ where $\g$ is the Lie algebra of the tangent space of $G$ at the unit $e:\Spec k\rightarrow G$. Clearly, $BG=[*/G]$ satisfies assumptions of the higher formality theorem. Therefore for $n>0$,
$$HH^{*}_{\E_{n+1}}(BG)\simeq \Gamma(BG,Sym_{\mathcal{O}_{BG}}(\g[-n]))\simeq Sym_{k}(\g[-n])^G$$
is related to the deformation problem of $\Rep(G)$. 

\begin{theorem}\label{BG}
Let $G$ be a reductive group. Then $\Rep(G)$ is rigid and tamely compactly generated by unobstructible objects. So that the naive deformation problem $\E_n\mathrm{CatDef}_{\Rep(G)}^c$ is already a formal $\E_{n+2}$-moduli problem, which is equivalent to 
$$\Map_{\Alg_{k}^{(n+2),\mathrm{aug}}}\Big(\mathscr{D}^{(n+2)}(-),k\oplus\mathrm{fib}\big(Sym_{k}(\g[-n])^G\rightarrow k\big)\Big)$$
where $\mathscr{D}^{(n+2)}$ is the $\E_{n+2}$-Koszul duality functor.
\end{theorem}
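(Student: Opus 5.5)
The strategy is to verify the hypotheses of Theorem \ref{mainA} for $\C=\Rep(G)=\QCoh(BG)$ and then identify the controlling algebra using the higher formality theorem.

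\textbf{Rigidity and compact generation.} Since $G$ is reductive and $k$ has characteristic $0$, the abelian category $\Rep(G)^{\heartsuit}$ is semisimple. Every finite-dimensional representation is a finite direct sum of irreducibles $V_\lambda$. The derived category $\Rep(G)$ is compactly generated by the $V_\lambda$, and the compact objects are the perfect complexes, i.e. bounded complexes of finite-dimensional representations. The stack $BG=[*/G]$ is perfect by the example citing \cite[Cor. 3.22]{BFN10}, so $\QCoh(BG)$ is rigid. Alternatively one checks the criterion of \cite{GR19} directly: the unit $k$ is compact, tensor products of finite-dimensional representations are finite-dimensional, and finite-dimensional representations have duals $V^\vee$.

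\textbf{Tameness and unobstructibility.} For finite-dimensional representations $V,W$, semisimplicity gives
$$\mathrm{Ext}^{i}_{\Rep(G)}(V,W)=H^i(G,V^\vee\otimes W)=0\quad\text{for } i\neq 0.$$
Hence for compact objects $C,D$, which are bounded complexes of finite-dimensional representations, $\mathrm{Ext}^n(C,D)$ vanishes for $n$ large, so $\Rep(G)$ is tamely compactly generated. Each $V_\lambda$ is compact with $\mathrm{Ext}^n(V_\lambda,V_\lambda)=0$ for $n\geq 1$, so it is unobstructible, and the $V_\lambda$ generate under colimits. Theorem \ref{mainA} then shows that $\E_n\mathrm{CatDef}^c_{\Rep(G)}$ is a formal $\E_{n+2}$-moduli problem equivalent to $\E_n\mathrm{CatDef}^\wedge_{\Rep(G)}$. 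This is the case $n\geq 1$, where rigidity is meaningful; for $n=0$ one can instead invoke the plain/simultaneous version.

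\textbf{Identifying the algebra.} By Theorem \ref{mainA}, the controlling non-unital $\E_{n+2}$-algebra is $\mathrm{fib}(HH^*_{\E_{n+1}}(BG)\to \End_{\Rep(G)}(1))$. Here $\End_{\Rep(G)}(k)=\Gamma(BG,\mathcal{O})=k^G=k$, again by semisimplicity. For $n>0$ the higher formality theorem applies to $BG=[*/G]$, with $Y$ a point and $G$ smooth linear. Combined with $\T_{BG}\simeq\g[1]$, it gives $HH^*_{\E_{n+1}}(BG)\simeq Sym_k(\g[-n])^G$.

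\textbf{Main obstacle.} The delicate step is compatibility. The higher formality equivalence is only asserted as an equivalence of dgLas after shifting by $n+1$, not a priori as $\E_{n+2}$-algebras, and one must check that it is compatible with the augmentation-type map to $k$, namely projection onto $Sym^0$. Two routes handle this:

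1. Use the fact from Section 1 that a formal moduli problem is determined by its tangent complex together with its Lie structure. Then one only needs the fiber as a dgLa, and this reduces to checking that the map $HH^*\to k$ corresponds under HKR to restriction along the unit $X\to\mathcal{L}^{(n)}_fX$, i.e. projection onto weight $0$.
2. Argue that the fiber is identified canonically as the positive-weight part.

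I would take the first route and state the equivalence in $\Moduli_k^{(n+2)}$, noting that the weight-$0$ part $k$ splits off, so the fiber is $\bigoplus_{i\ge1}Sym^i(\g[-n])^G$.
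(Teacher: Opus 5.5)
Your verification of the hypotheses of Theorem~\ref{mainA} matches the paper's proof. Rigidity comes from $BG=[*/G]$ being perfect, tameness and unobstructibility come from exactness of $(-)^G$ in characteristic $0$, and the algebra is then read off from $\End_{\Rep(G)}(1)=k$ and higher formality (the displayed formula itself needs $n>0$, since that is where higher formality applies). One small repair: the irreducibles $V_\lambda$ alone only generate the connective part of $\Rep(G)$ under small colimits, because colimits produce suspensions but not desuspensions. You should therefore take all shifts $V_\lambda[m]$, $m\in\mathbb{Z}$, as the paper does with its $V[n]$. This costs nothing, since unobstructibility is shift-invariant.

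The genuine gap is in the step you chose to resolve your ``main obstacle''. Route 1 rests on the principle that a formal $\E_{n+2}$-moduli problem is determined by its tangent complex with its Lie structure, and that principle is not correct. Under $\Psi:\Alg_k^{(n+2),\aug}\simeq\Moduli_k^{(n+2)}$ the problem is determined by the non-unital $\E_{n+2}$-algebra $R$. The dgLa $R[n+1]$ only records the restriction of the problem to $\CAlg_k^{\Art}$; this is the commutative square in Section 1. Also, \cite[Prop. 12.2.2.6]{SAG} says that a \emph{morphism} of formal moduli problems inducing an equivalence on tangent complexes is an equivalence. It does not say that an abstract identification of tangent Lie algebras produces an equivalence. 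So a dgLa equivalence $\mathrm{fib}(HH^*_{\E_{n+1}}(BG)\to k)[n+1]\simeq\mathrm{fib}(Sym_k(\g[-n])^G\to k)[n+1]$ would only identify the two functors after restriction to commutative Artin algebras. It would not identify them on $\Alg_k^{(n+2),\Art}$, which is what the statement asserts and what you claim when you say you will work in $\Moduli_k^{(n+2)}$. Route 2 has the same defect unless you show that the weight splitting is a splitting of $\E_{n+2}$-algebras. What you need is the formality identification $HH^*_{\E_{n+1}}(BG)\simeq Sym_k(\g[-n])^G$ as augmented $\E_{n+2}$-algebras, compatible with the maps to $\End_{\Rep(G)}(1)=k$. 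The paper's proof uses exactly this implicitly when it substitutes $Sym_k(\g[-n])^G$ into Theorem~\ref{mainA}, even though it quotes the formality theorem only as a dgLa equivalence. So your concern is legitimate, but your chosen fix does not resolve it. You must either supply the $\E_{n+2}$-level formality or weaken the conclusion to the restriction to $\CAlg_k^{\Art}$.
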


\begin{proof}
Because $BG=[*/G]$ is perfect, $\Rep(G)\simeq \QCoh(BG)$ is rigid and compact objects are perfect complexes i.e. bounded complexes of finite dimensional $G$-representations.

From \cite[Lem. 2.4.1]{DG13}, we know the $G$-invariant functor i.e. global section functor 
$$\Gamma(BG,-):\QCoh(BG)\rightarrow \Mod_k$$
is exact because in the field $k$ of characteristic $0$, $\QCoh(BG)^\heartsuit$ is semi-simple. Then for two perfect complexes $V$ and $W$, the Hom complex
$$\mathrm{Hom}_{\Rep(G)}(V,W)\simeq \Hom_{\Mod_k}(V,W)^G$$
is bounded on the right. This proves $\Rep(G)$ is tamely compactly generated.

Next, objects of the form $V[n]$ where $V$ is a finite dimensional $G$-representation, generate $\Rep(G)$ under colimits. But
$$\Hom_{\Rep(G)}(V[n],V[n])=\Hom_{\Mod_k}(V,V)^G$$
is concentrated at degree $0$, so $\Rep(G)$ is generated by unobstructible objects.
\end{proof}

\paragraph{Davydov-Yetter Cohomology}
In \cite[Sec. 7]{ENO05}, there is a concept of \textit{Davydov-Yetter cohomology} used to study the monoidal deformation of a tensor category.

\begin{definition}
    \normalfont
Let $\C$ be a tensor category. The \textit{Davydov-Yetter (co)chain complex $DY(\C)$} is defined as follows.
\begin{itemize}
    \item[(1).] $DY^n(\C)=\End(T_n)$ where $T_n$ is the $n$-functor 
    $$T_n:\C^n\rightarrow \C,\ (X_1,\cdots,X_n)\mapsto X_1\otimes \cdots \otimes X_n$$
    In particular, $T_0:\C^0\rightarrow \C$ is defined by $T_0(\emptyset)=1_\C$ and $T_1=\id$.
    \item[(2).] The differential $d:DY^n(\C)\rightarrow DY^{n+1}(\C)$ is given by 
    $$df=\id\otimes f_{2,\cdots,n+1}-f_{12,3,\cdots,n+1}+f_{1,23,\cdots,n+1}-\cdots+(-1)^nf_{1,\cdots,nn+1}+(-1)^{n+1}f_{1,\cdots,n}\otimes \id$$
\end{itemize}
$H^n(DY(\C))$ is called the \textit{$n$-th Davydov-Yetter cohomology}.
\end{definition}

\begin{prop}
Let $\C$ be a tensor category. The first order monoidal deformations and the obstructions to extend to be a second order deformation of $\C$ are classified by $H^3(DY(\C))$ and $H^4(DY(\C))$ respectively.
\end{prop}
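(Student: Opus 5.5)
The plan is to deduce the proposition from Theorem \ref{mainA} with $n=1$, in the same way the earlier proposition on associative algebras was deduced from Theorem \ref{2.6}. Regard the tensor category $\C$ (a semisimple, or at least finite, abelian tensor category) through its ind-completion, a rigid monoidal stable $k$-linear $\infty$-category compactly generated by the simple objects. When $\C$ is semisimple these objects have no higher self-extensions, so they are unobstructible. Theorem \ref{mainA} then says $\E_1\mathrm{CatDef}_{\C}^c$ is itself a formal $\E_3$-moduli problem, controlled by the non-unital $\E_3$-algebra
$$R:=\mathrm{fib}\big(\End_{\Z_{\E_1}(\C)}(1)\rightarrow \End_{\C}(1)\big),$$
where $\Z_{\E_1}(\C)$ is the Drinfeld center.

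\textbf{Step 1: the two cohomology groups.} Using the Koszul duality example, $\mathscr{D}^{(3)}(k[x]/x^2)\simeq\mathrm{Free}_{\E_3}(k[-3])$. This gives
$$\pi_0\E_1\mathrm{CatDef}_{\C}(k[x]/x^2)\simeq\pi_0\Map_{\Mod_k}(k[-3],R)\simeq H^3(R).$$
For obstructions, use the pullback square $k[x]/x^3\simeq k[x]/x^2\times_{k\oplus k[1]}k$, exactly as in the proof for associative algebras. Since $\mathscr{D}^{(3)}(k\oplus k[1])\simeq \mathrm{Free}_{\E_3}(k[-4])$, we get $\pi_0\E_1\mathrm{CatDef}_{\C}(k\oplus k[1])\simeq H^4(R)$. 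The resulting exact sequence
$$\pi_0\E_1\mathrm{CatDef}_{\C}(k[x]/x^3)\rightarrow H^3(R)\rightarrow H^4(R)$$
identifies $H^4(R)$ as the home of obstructions to lifting a first order deformation to second order. It remains to show $H^i(R)\cong H^i(DY(\C))$ for $i=3,4$.

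\textbf{Step 2: compare $R$ with $DY(\C)$.} Write $\End_{\Z_{\E_1}(\C)}(1)=\End_{\mathbf{BiMod}_\C}(\C)$, the Hochschild cohomology of $\C$ as an algebra in $\Pr^{St}_k$. I would compute it with the bar resolution of $\C$ as a $\C$-bimodule, $\C^{\otimes(\bullet+2)}$. Then
$$\fun_{\C\text{-}\C}(\C^{\otimes(m+2)},\C)\simeq\fun^L(\C^{\otimes m},\C),$$
so $\End_{\mathbf{BiMod}_\C}(\C)$ is the totalization of a cosimplicial object whose $m$-th term is the endomorphism complex of the bimodule functor. Taking endomorphisms of the tensor functor $T_m$ in the $m$-th level gives a double complex. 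One direction is the Hochschild direction; the other is the internal $\Hom$ complex $\End_{\fun^L(\C^{\otimes m},\C)}(T_m)$. The coface maps of the Hochschild direction are precisely the Davydov--Yetter formulas: insertion of $\id\otimes-$, $-\otimes\id$, and composition with the multiplication $f_{\dots,ii+1,\dots}$. Because $\C$ is semisimple, $\End(T_m)$ is concentrated in degree $0$, so the totalization is $DY(\C)$ with a degree shift. Matching the grading of Theorem \ref{mainA} (where $H^3(R)$ is first order data, agreeing with $DY^3$ in \cite{ENO05}), the expected outcome is $\End_{\Z_{\E_1}(\C)}(1)\simeq DY(\C)$ up to the truncation at level $0$.

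\textbf{Step 3: remove the fiber.} $\End_\C(1)=k$ is concentrated in degree $0$, so the long exact sequence of the fiber shows $H^i(R)\cong H^i(\End_{\Z_{\E_1}(\C)}(1))$ for $i\ge 2$, in particular for $i=3,4$.

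The main obstacle is Step 2: making the shift and the identification of the cosimplicial differential with the Davydov--Yetter differential precise, including the fact that the extreme terms $DY^0,DY^1$ correspond correctly. I expect the most reliable route is to check it on the low-degree terms directly and cite the standard identification of Davydov--Yetter cohomology with $\E_1$-Hochschild cohomology (the tangent complex of the moduli of monoidal structures) for the general comparison.
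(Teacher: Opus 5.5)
\textbf{Comparison with the source argument.} The paper gives no proof of this proposition. It is quoted from \cite[Sec.~7]{ENO05}, where it is a direct computation with the pentagon axiom:
a first order deformation of the associativity constraint is $\Phi(1+\epsilon f)$ with $f\in DY^3(\C)=\End(T_3)$;
the pentagon modulo $\epsilon^2$ says $df=0$;
re-identifying via a tensor structure $1+\epsilon g$ ($g\in DY^2(\C)$) on the identity functor changes $f$ by $dg$;
for any second order lift, the order-$\epsilon^2$ defect of the pentagon is a $4$-cocycle whose class is the obstruction.
That argument works for every tensor category, with the abelian category and the tensor product functor held fixed.

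You instead derive the statement from Theorem \ref{mainA} plus an identification $\End_{\Z_{\E_1}(\C)}(1)\simeq DY(\C)$, which the paper only uses implicitly in the $\Rep(G)$ example that follows. For semisimple $\C$ this is correct, and the step you call the main obstacle is clean:
\begin{itemize}
\item[(a)] The bar resolution gives $\Z_{\E_1}(\C)=\fun_{\C\text{-}\C}(\C,\C)\simeq\Tot_m\fun^L(\C^{\otimes m},\C)$, with $\id_\C\mapsto(T_m)_m$.
\item[(b)] Mapping objects in a limit of $\infty$-categories are limits of mapping objects, so $\End_{\Z_{\E_1}(\C)}(1)=\End(\id_\C)\simeq\Tot_m\End(T_m)$.
\item[(c)] When each $\End(T_m)$ is discrete, this totalization is literally $DY(\C)$ with $DY^m$ in degree $m$, and the cofaces are exactly the terms of the Davydov--Yetter differential.
\end{itemize}
So there is no degree shift, consistent with $DY(\Rep(G))\simeq Sym_k(\g[-1])^G\simeq\End_{\Z_{\E_1}(\Rep(G))}(1)$ in the paper. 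Also $R\simeq DY^{\geq 1}(\C)$, and nothing needs to be cited. One correction: $\End_{\Z_{\E_1}(\C)}(1)$ is the endomorphism object of $\id_\C$ in $\Z_{\E_1}(\C)=\End_{\mathbf{BiMod}_\C}(\C)$, not that category itself, as the first line of your Step 2 says.

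\textbf{The gap is scope.} The proposition is stated for an arbitrary tensor category, but your argument uses semisimplicity at three essential points:
\begin{itemize}
\item[(1)] to make the derived $\End_{\fun^L(\C^{\otimes m},\C)}(T_m)$ discrete;
\item[(2)] to get $\End_\C(1)\simeq k$ in Step 3;
\item[(3)] to make $\mathrm{Ind}(D^b(\C))$ tamely compactly generated by unobstructible objects, so that Theorem \ref{mainA} identifies the naive problem with the formal one.
\end{itemize}
For non-semisimple $\C$ all three fail. For representations of Sweedler's four-dimensional Hopf algebra, $\mathrm{Ext}^{2j}_\C(1,1)\cong k$ for all $j\geq 0$. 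So $\End_\C(1)\not\simeq k$, and the category is not even tamely compactly generated.

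Moreover, $\Tot_m\End(T_m)$ is then a genuine double complex. It also sees derived deformations of the underlying category and of $T_2$, which $DY(\C)$ ignores, so in general it is not $DY(\C)$. Your method therefore computes a different deformation theory there rather than merely leaving the case open. The statement as written needs the direct argument of \cite{ENO05}.

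In the semisimple case you should also add a sentence reconciling the two meanings of ``monoidal deformation''. Since $\End_{\fun^L(\C,\C)}(\id_\C)$ and $\End(T_2)$ are discrete, a first order $\E_1$-monoidal deformation of $\mathrm{Ind}(D^b(\C))$ is the same as a first order deformation of the associativity constraint. With ``semisimple'' added to the hypotheses your proof is complete, and that case is all the subsequent $\Rep(G)$ example needs.
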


\begin{example}
    \normalfont
Let the tensor category be $\C=\Rep(G)$ where $G$ is a reductive group over $k$ with the corresponding Lie algebra $\g$. \cite[Example 7.3]{ENO05 } computes $H^n(DY(\Rep(G)))=(\wedge^n \g)^G$. So that $DY(\Rep(G))$ is actually equivalent to $Sym_k(\g[-1])^G$. The space of the first order monoidal deformations should be 
\begin{align*}
\pi_0\E_1\mathrm{CatDef}_{\Rep(G)}^c(k[x]/x^2)&\simeq \pi_0\Map_{\Alg_{k}^{(3),\mathrm{aug}}}\Big(\mathscr{D}^{(3)}(k[x]/x^2),k\oplus\mathrm{fib}\big(Sym_{k}(\g[-1])^G\rightarrow k\big)\Big)\\
&\simeq \pi_0\Map_{\Alg_{k}^{(3)}}\Big(\mathrm{Free}_{\E_3}k[-3],\mathrm{fib}\big(Sym_{k}(\g[-1])^G\rightarrow k\big)\Big)\\
&\simeq \pi_0\Map_{\Mod_{k}}\Big(k[-3], \mathrm{fib}\big(Sym_{k}(\g[-1])^G\rightarrow k\big)\Big)\\
&\simeq \pi_0\mathrm{fib}\Big(\Map_{\Mod_{k}}(k[-3],Sym_k(\g[-1])^G)\rightarrow \Map_{\Mod_{k}}(k[-3],k)\Big)\\
&\simeq (\wedge^3 \g )^G
\end{align*}
which coincides with the Davydov-Yetter theory. For the second order monoidal deformation, using the pullback diagram
\[\begin{tikzcd}
	{k[x]/x^3} & k \\
	{k[x]/x^2} & {k\oplus k[1]}
	\arrow[from=1-1, to=1-2]
	\arrow[from=1-1, to=2-1]
	\arrow["\lrcorner"{anchor=center, pos=0.125}, draw=none, from=1-1, to=2-2]
	\arrow[from=1-2, to=2-2]
	\arrow[from=2-1, to=2-2]
\end{tikzcd}\]
we can obtain an exact sequence
$$\pi_0\E_1\mathrm{CatDef}_{\Rep(G)}^c(k[x]/x^3)\rightarrow \pi_0\E_1\mathrm{CatDef}_{\Rep(G)}^c(k[x]/x^2)=(\wedge^3 \g)^G\rightarrow (\wedge^4\g)^G$$

If $\g$ is simple, we see $(\wedge^3 \g)^G=H^3(\g,k)\cong k$ and $(\wedge^4 \g)^G=0$. So that in this case there is a unique non-trivial first order monoidal deformation of $\Rep(G)$ up to scaling which can extend to be a second order monoidal deformation.
\end{example}

\subsubsection{Uniqueness of the Formal Deformation}
We suppose $G$ is a reductive group and the corresponding Lie algebra $\g$ is simple. In \cite{Dri90} and \cite{Dri91}, Drinfeld has already studied the monoidal and braided monoidal deformations of the \textit{abelian category} $\Rep(\g)^{\heartsuit}$ using (quasi-triangular) quasi-Hopf algebras, instead of working with monoidal categories directly. The uniqueness theorem Drinfeld proves states that the non-trivial formal deformation of the enveloping algebra $U(\g)$ is unique up to isomorphisms, twistings and change of parameter, which implies that the non-trivial (braided) monoidal formal deformation of $\Rep(\g)^{\heartsuit}$ should also be unique. Such formal deformation is realized by $\Rep(U_\hbar(\g))^\heartsuit$ where $U_\hbar(\g)$ is the universal enveloping algebra. In particular, the Drinfeld category will be equivalent to $\Rep(U_\hbar(\g))^\heartsuit$ as braided monoidal categories \cite[Thm. 1.4.6]{BaK01}.

In the following, we work in the derived case and prove a uniqueness theorem for $\E_1$ and $\E_2$-monoidal formal deformations of the stable $\infty$-category $\Rep(G)$.

\begin{prop}\label{RepG}
Let $G$ be a reductive group whose corresponding Lie algebra $\g$ is simple. Then the set of path components of the $n$-th order $\E_2$-monoidal deformations of $\Rep(G)$ is $\pi_0\E_2\mathrm{CatDef}_{\Rep(G)}^c(k[[x]]/x^{n+1})\simeq \oplus_{n} k$. And for $\E_2$-monoidal formal deformations, we have $\pi_0\E_2\mathrm{CatDef}_{\Rep(G)}^c(k[[x]])\simeq \prod_{\mathbb{Z}}k$.
\end{prop}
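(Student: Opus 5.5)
By Theorem \ref{BG} with $n=2$, $\E_2\mathrm{CatDef}_{\Rep(G)}^c$ is already a formal $\E_4$-moduli problem. It is equivalent to $\Map_{\Alg_k^{(4),\aug}}\big(\mathscr{D}^{(4)}(-),k\oplus R\big)$ with
$$R=\mathrm{fib}\big(Sym_k(\g[-2])^G\rightarrow k\big)=\bigoplus_{i\geq 1}(Sym^i\g)^G[-2i].$$
I would restrict to commutative Artin algebras and pass to the dgLa $\mathfrak{h}:=R[3]$, as in the summary theorem of Lurie. The plan is to show that $\mathfrak{h}$ is formal and abelian and then to evaluate the Maurer--Cartan functor directly. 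The cohomology of $R$ is concentrated in even degrees $2i$ with $i\geq 1$, so $\mathfrak{h}$ has cohomology only in odd degrees $2i-3$, namely $-1,1,3,\dots$. The degree-$1$ part is $(Sym^2\g)^G$, which is one dimensional because $\g$ is simple (Killing form). The degree-$0$ part vanishes, and so does degree $2$.

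\textbf{Step 1: reduce to an abelian dgLa.} The invariant ring $Sym(\g)^G\cong Sym(\g^*)^G$ is a polynomial ring, and the Higher Formality theorem gives $HH^*_{\E_3}(BG)\simeq Sym_k(\g[-2])^G$. I would argue that the transferred $L_\infty$-structure on the cohomology of $\mathfrak{h}$ vanishes for degree reasons. An $\ell_m$ of $m$ inputs of odd degree has output in degree $\sum(2i_j-3)+2-m$. Its parity is $m\cdot 1+m \equiv 0$, i.e. even, but all cohomology sits in odd degrees. Hence every $\ell_m$ ($m\geq2$) between cohomology classes is zero, and the differential is zero as well. So $\mathfrak{h}$ is quasi-isomorphic to its cohomology viewed as an abelian dgLa. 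This parity argument is the key step, and it is where I expect the subtlety. One must make sure that the $\E_4$-structure does not carry more information than the dgLa on commutative test algebras, but since the statement only evaluates at $k[x]/x^{n+1}$ and $k[[x]]$, the dgLa picture suffices.

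\textbf{Step 2: compute Maurer--Cartan sets.} For an abelian dgLa $V$ with zero differential and commutative Artin $B$, we have $\pi_0\mathrm{MC}_V(B)=H^1(V\otimes\m_B)$, the gauge action being trivial. With $B=k[x]/x^{n+1}$, $\m_B$ is concentrated in degree $0$ with dimension $n$, so
$$\pi_0\E_2\mathrm{CatDef}^c_{\Rep(G)}(k[x]/x^{n+1})\simeq (Sym^2\g)^G\otimes \m_B\cong\oplus_n k.$$
These classes are represented by $\beta\cdot(a_1x+\dots+a_nx^n)$, with $\beta$ the Killing form.

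\textbf{Step 3: formal case.} Define $k[[x]]=\lim_n k[x]/x^{n+1}$ and extend the problem to pro-Artin algebras by limits. Then $\pi_0\E_2\mathrm{CatDef}(k[[x]])$ fits into a Milnor sequence with a $\lim^1$ term built from $\pi_1$ of the finite stages. Here $\pi_1=H^0(V\otimes\m_B)=0$, since $\mathfrak{h}$ has no degree-$0$ cohomology. So $\pi_0$ is $\lim_n\oplus_n k=x\,k[[x]]\cong\prod k$, a countable product, matching the stated $\prod_{\mathbb{Z}}k$ (indexed by positive powers of $x$).
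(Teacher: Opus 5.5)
Your proof is correct, but it takes a different route from the paper. The paper never determines the controlling Lie algebra up to equivalence. It stays with $\E_4$-mapping spaces and inducts on the order using the square-zero pullbacks $k[x]/x^{m+1}\simeq k[x]/x^{m}\times_{k\oplus k[1]}k$. It reads off $\pi_0$ and $\pi_1$ from the long exact sequence, using only $H^3(R)=H^5(R)=0$ and $H^4(R)\cong (Sym^2\g)^G\cong k$; this is the mechanism later abstracted in Lemma~\ref{compare} and Theorem~\ref{unique}. It then passes to $k[[x]]$ via \cite[Lem.~4.23]{BKP} and the Milnor sequence.

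You instead use the global fact that $H^*(R)$ is concentrated in even degrees. By the parity of the transferred brackets, $R[3]$ is formal and abelian, so on commutative test algebras $\pi_0=(Sym^2\g)^G\otimes\m_B$ and $\pi_1=0$. This gives a canonical, functorial linear structure:
\begin{itemize}
\item the transition maps of the tower are visibly the projections $xk[x]/x^{m+2}\to xk[x]/x^{m+1}$, which the paper uses tacitly when writing $\lim_n(\oplus_{n-1}k)$;
\item the homotopy groups do not depend on the basepoint;
\item the identification $\pi_0=(Sym^2\g)^G\otimes xk[[x]]$ makes the conclusion of Theorem~\ref{unique} immediate here, since $\mathrm{Aut}(k[[x]])$ acts by substitution.
\end{itemize}
The paper's argument, by contrast, needs no homotopy transfer and only involves $H^*(R)$ in degrees $3,4,5$. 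It would therefore survive odd-degree cohomology of $R$ far from degree $4$, where your parity argument breaks.

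One small point: $\E_2\mathrm{CatDef}^c_{\Rep(G)}$ is defined on all augmented $\E_4$-algebras. To identify its actual value at $k[[x]]$ with your limit, you need a continuity statement such as \cite[Lem.~4.23]{BKP}; declaring the extension to pro-Artin algebras by limits as a convention does not establish this.
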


\begin{proof}
From Theorem \ref{BG}, we know the $\E_2$-monoidal deformation problem $\E_n\mathrm{CatDef}_{\Rep(G)}^c$ is equivalent to 
$$\Map_{\Alg_{k}^{(4),\mathrm{aug}}}\Big(\mathscr{D}^{(4)}(-),k\oplus\mathrm{fib}\big(Sym_{k}(\g[-2])^G\rightarrow k\big)\Big)$$
where $\mathscr{D}^{(4)}$ is the $\E_{4}$-Koszul duality functor. So we can compute 
\begin{align*}
\E_2\mathrm{CatDef}_{\Rep(G)}^c(k[x]/x^2)&\simeq \Map_{\Alg_{k}^{(4),\mathrm{aug}}}\Big(\mathscr{D}^{(4)}(k[x]/x^2),k\oplus\mathrm{fib}\big(Sym_{k}(\g[-2])^G\rightarrow k\big)\Big)\\
&\simeq \Map_{\Alg_{k}^{(4)}}\Big(\mathrm{Free}_{\E_4}k[-4],\mathrm{fib}\big(Sym_{k}(\g[-2])^G\rightarrow k\big)\Big)\\
&\simeq \Map_{\Mod_{k}}\Big(k[-4], \mathrm{fib}\big(Sym_{k}(\g[-2])^G\rightarrow k\big)\Big)\\
&\simeq \mathrm{fib}\Big(\Map_{\Mod_{k}}(k[-4],Sym_k(\g[-2])^G)\rightarrow \Map_{\Mod_{k}}(k[-4],k)\Big)
\end{align*}
and using long exact sequences, we get 
\[\begin{tikzpicture}[descr/.style={fill=white,inner sep=1.5pt}]
        \matrix (m) [
            matrix of math nodes,
            row sep=1em,
            column sep=2.5em,
            text height=1.5ex, text depth=0.25ex
        ]
        { 0 & \pi_1\E_2\mathrm{CatDef}_{\Rep(G)}^c(k[x]/x^2) & 0 & 0 \\
            & \pi_0 \E_2\mathrm{CatDef}_{\Rep(G)}^c(k[x]/x^2)&  (Sym^2 \g)^G & 0 \\
        };

        \path[overlay,->, font=\scriptsize,>=latex]
        (m-1-1) edge (m-1-2)
        (m-1-2) edge (m-1-3)
        (m-1-3) edge (m-1-4)
        (m-1-4) edge[out=355,in=175] node[descr,yshift=0.3ex] {} (m-2-2)
        (m-2-2) edge (m-2-3)
        (m-2-3) edge (m-2-4);
\end{tikzpicture}\]
Hence $\pi_1\E_2\mathrm{CatDef}_{\Rep(G)}^c(k[x]/x^2)=0$ and from \cite[Prop. 2.16]{Saf21},
$$\pi_0\E_2\mathrm{CatDef}_{\Rep(G)}^c(k[x]/x^2)\cong (Sym^2\g)^G\cong(\wedge^3\g)^G\cong k$$
which means non-trivial first order $\E_2$-monoidal deformations of $\Rep(G)$ are unique up to scaling.

Now we assume we have already computed 
$$\pi_1\E_2\mathrm{CatDef}_{\Rep(G)}^c(k[x]/x^{n+1})=0,\ \ \text{and}\ \ \pi_0 \E_2\mathrm{CatDef}_{\Rep(G)}^c(k[x]/x^{n+1})=\oplus_{n} k$$
Consider the following pullback diagram
\[\begin{tikzcd}
	{k[x]/x^{n+2}} & k \\
	{k[x]/x^{n+1}} & {k\oplus k[1]}
	\arrow[from=1-1, to=1-2]
	\arrow[from=1-1, to=2-1]
	\arrow["\lrcorner"{anchor=center, pos=0.125}, draw=none, from=1-1, to=2-2]
	\arrow[from=1-2, to=2-2]
	\arrow[from=2-1, to=2-2]
\end{tikzcd}\]
and then we get the fiber sequence 
$$\E_2\mathrm{CatDef}_{\Rep(G)}^c(k[x]/x^{n+2})\rightarrow \E_2\mathrm{CatDef}_{\Rep(G)}^c(k[x]/x^{n+1})\rightarrow \E_2\mathrm{CatDef}_{\Rep(G)}^c(k\oplus k[1])$$
where 
\begin{align*}
 \E_2\mathrm{CatDef}_{\Rep(G)}^c(k\oplus k[1])
 & \simeq \Map_{\Alg_{k}^{(4),\mathrm{aug}}}\Big(\mathscr{D}^{(4)}(k\oplus k[1]),k\oplus\mathrm{fib}\big(Sym_{k}(\g[-2])^G\rightarrow k\big)\Big)\\
 & \simeq \Map_{\Alg_{k}^{(4)}}\Big(\mathrm{Free}_{\E_4}k[-5],\mathrm{fib}\big(Sym_{k}(\g[-2])^G\rightarrow k\big)\Big)\\
&\simeq \Map_{\Mod_{k}}\Big(k[-5], \mathrm{fib}\big(Sym_{k}(\g[-2])^G\rightarrow k\big)\Big)\\
&\simeq \mathrm{fib}\Big(\Map_{\Mod_{k}}(k[-5],Sym_k(\g[-2])^G)\rightarrow \Map_{\Mod_{k}}(k[-5],k)\Big)
\end{align*}
and using the long exact sequence, we see
\[\begin{tikzpicture}[descr/.style={fill=white,inner sep=1.5pt}]
        \matrix (m) [
            matrix of math nodes,
            row sep=1em,
            column sep=2.5em,
            text height=1.5ex, text depth=0.25ex
        ]
        { 0 & \pi_2\E_1\mathrm{CatDef}_{\Rep(G)}^c(k\oplus k[1]) & 0 & 0 \\
            & \pi_1\E_1\mathrm{CatDef}_{\Rep(G)}^c(k\oplus k[1]) & (Sym^2\g)^{G} \cong k & 0 \\
            & \pi_0\E_1\mathrm{CatDef}_{\Rep(G)}^c(k\oplus k[1]) & 0 & 0 \\
        };

        \path[overlay,->, font=\scriptsize,>=latex]
        (m-1-1) edge (m-1-2)
        (m-1-2) edge (m-1-3)
        (m-1-3) edge (m-1-4)
        (m-1-4) edge[out=355,in=175] node[descr,yshift=0.3ex] {} (m-2-2)
        (m-2-2) edge (m-2-3)
        (m-2-3) edge (m-2-4)
        (m-2-4) edge[out=355,in=175] node[descr,yshift=0.3ex] {} (m-3-2)
        (m-3-2) edge (m-3-3)
        (m-3-3) edge (m-3-4);
\end{tikzpicture}\]
Next look at the long exact sequence induced by $k[x]/x^{n+2}=\mathrm{pullback}(k[x]/x^{n+1}\rightarrow k\oplus k[1]\leftarrow k)$.
\[\begin{tikzpicture}[descr/.style={fill=white,inner sep=1.5pt}]
        \matrix (m) [
            matrix of math nodes,
            row sep=1em,
            column sep=2.5em,
            text height=1.5ex, text depth=0.25ex
        ]
        { 0 & \pi_1 \E_2\mathrm{CatDef}_{\Rep(G)}^c(k[x]/x^{n+2}) & 0 & (Sym^2 \g)^G\cong k \\
            & \pi_0\E_2\mathrm{CatDef}_{\Rep(G)}^c(k[x]/x^{n+2})&  \oplus _n k & 0 \\
        };

        \path[overlay,->, font=\scriptsize,>=latex]
        (m-1-1) edge (m-1-2)
        (m-1-2) edge (m-1-3)
        (m-1-3) edge (m-1-4)
        (m-1-4) edge[out=355,in=175] node[descr,yshift=0.3ex] {} (m-2-2)
        (m-2-2) edge (m-2-3)
        (m-2-3) edge (m-2-4);
\end{tikzpicture}\]
So by induction we have 
$$\pi_1 \E_2\mathrm{CatDef}_{\Rep(G)}^c(k[x]/x^{n+2})=0,\ \ \text{and}\ \ \pi_0 \E_2\mathrm{CatDef}_{\Rep(G)}^c(k[x]/x^{n+2})\cong \oplus_{n+1} k$$
Because $k[[x]]=\lim_n k[x]/x^n$, from \cite[Lem. 4.23]{BKP} we obtain the equivalence 
$$\E_2\mathrm{CatDef}_{\Rep(G)}^c(k[[x]])=\E_2\mathrm{CatDef}_{\Rep(G)}^c(\lim_n k[x]/x^n)\simeq \lim_n \E_2\mathrm{CatDef}_{\Rep(G)}^c(k[x]/x^n)$$
Using Milnor's sequence in \cite[Thm. 2.1]{Hir15}, we have an exact sequence

\[\begin{tikzpicture}[descr/.style={fill=white,inner sep=1.5pt}]
        \matrix (m) [
            matrix of math nodes,
            row sep=1em,
            column sep=2.5em,
            text height=1.5ex, text depth=0.25ex
        ]
        {   0 & \lim_{n}^{1}\pi_1 \E_2\mathrm{CatDef}_{\Rep(G)}^c(k[x]/x^n)=0 \\
            \pi_0 \lim_{n}\E_2\mathrm{CatDef}_{\Rep(G)}^c(k[x]/x^n)&  \lim_{n}\pi_0\E_2\mathrm{CatDef}_{\Rep(G)}^c(k[x]/x^n) & 0 \\
        };

        \path[overlay,->, font=\scriptsize,>=latex]
        (m-1-1) edge (m-1-2)
        (m-1-2) edge[out=355,in=175] node[descr,yshift=0.3ex] {} (m-2-1)
        (m-2-1) edge (m-2-2)
        (m-2-2) edge (m-2-3);
\end{tikzpicture}\]
so that there is an equivalence
$$\pi_0\E_2\mathrm{CatDef}_{\Rep(G)}^c(k[[x]])\simeq \lim_n (\oplus_{n-1}k)=\prod_{\mathbb{Z}}k$$
\end{proof}

Although here we get $\pi_0\E_2\mathrm{CatDef}_{\Rep(G)}^c(k[[x]])\simeq \prod_{\mathbb{Z}}k$ which seems a bit large, we can actually prove non-trivial formal deformations which induce non-trivial first order deformations, are unique up to equivalences and scaling i.e. $\mathrm{Aut}(k[[x]])$. It is not a special case for $\E_2$-monoidal formal deformations of $\Rep(G)$. 

In \cite[Sec. 5]{Sei02}, Seidel studies $A_\infty$-deformations of $A_\infty$-categories. An $A_\infty$-category $\mathcal{C}$ is a differential graded (DG) category and $A_\infty$-deformations are just to deform it as a plain DG category, which is explained by the deformation theory of plain categories as discussed in previous sections. So such a formal $\E_2$-moduli problem is controlled by the (non-unital) $\E_2$-algebra $HH^*_{\E_{1}}(\C)=\End_{\mathcal{Z}_{\E_0}(\C)}(1)$ i.e. Hochschild cohomology. Sicne $\mathscr{D}^{(2)}(k[x]/x^2)=\mathrm{Free}_{\E_2}(k[-2])$, the set of path components of first order deformations will be $HH^{2}_{\E_{1}}(\C)$. In \cite[p8, Sec. 5]{Sei02}, Seidel states if $HH^{2}_{\E_{1}}(\C)\cong k$ is one dimensional, then a non-trivial formal deformation of $\C$, if it exists, is unique up to equivalences and change of parameter $x\mapsto f(x)$. We can notice in the case of $\Rep(G)$, its first order $\E_1$ and $\E_2$-monoidal deformations both form a one dimensional space. So there should be a more general statement for formal $\E_n$-moduli problems. In the following, we prove this assertion under some assumptions.

\begin{lemma}\label{compare}
Let $R\rightarrow R'$ be a morphism of non-unital $\E_n$-algebras for $n\geq 2$ such that it induces isomorphisms on $H^{n+1}$, $H^n$ and $H^{n-1}$. If $H^{n-1}(R)\cong H^{n-1}(R')=0$ or $H^{n-2}(R)\rightarrow H^{n-2}(R')$ is surjective, then we have an equivalence
$$\pi_0\Map_{\Alg_{k}^{(n)}}(\mathscr{D}^{(n)}(k[[x]]),R)\xrightarrow{\sim}\pi_0\Map_{\Alg_{k}^{(n)}}(\mathscr{D}^{(n)}(k[[x]]),R')$$
\end{lemma}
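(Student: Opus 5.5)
We interpret both sides via the formal $\E_n$-moduli problems $X=\Psi(k\oplus R)$ and $X'=\Psi(k\oplus R')$, so the map in question is $\pi_0X(k[[x]])\to\pi_0X'(k[[x]])$ induced by $f:R\to R'$. We reduce to finite order. As in the proof of Prop.~\ref{RepG}, \cite[Lem. 4.23]{BKP} gives $X(k[[x]])\simeq\lim_m X(k[x]/x^{m})$, and likewise for $X'$. Milnor's sequence \cite[Thm. 2.1]{Hir15} then gives the exact sequences
$$0\to {\lim}^1_m\pi_1X(k[x]/x^m)\to\pi_0X(k[[x]])\to\lim_m\pi_0X(k[x]/x^m)\to 0,$$
and the same for $X'$. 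By the five lemma (in its version for pointed sets with group actions) it is enough to show two things. First, $\pi_0X(k[x]/x^m)\to\pi_0X'(k[x]/x^m)$ is a bijection for all $m$. Second, $\pi_1X(k[x]/x^m)\to\pi_1X'(k[x]/x^m)$ is surjective with compatible kernels, so that the ${\lim}^1$ terms agree. In practice we will show that $\pi_1$ is an isomorphism when $H^{n-1}$ vanishes, and surjective in the other case.

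We argue by induction on $m$, using the pullback square $k[x]/x^{m+1}=k[x]/x^m\times_{k\oplus k[1]}k$ from Prop.~\ref{RepG}. It yields the fiber sequence
$$X(k[x]/x^{m+1})\to X(k[x]/x^m)\to X(k\oplus k[1]),$$
where $X(k\oplus k[j])\simeq\Omega^{\infty}(R[n+j])$ as computed earlier. Hence $\pi_iX(k\oplus k[1])=H^{n+1-i}(R)$ and $\pi_iX(k[x]/x^2)=H^{n-i}(R)$. The base case $m=2$ needs $H^n$ (for $\pi_0$) and $H^{n-1}$ (for $\pi_1$). The inductive step compares the two long exact sequences
$$\pi_2X(k\oplus k[1])\to\pi_1X(k[x]/x^{m+1})\to\pi_1X(k[x]/x^m)\to H^n\to\pi_0X(k[x]/x^{m+1})\to\pi_0X(k[x]/x^m)\to H^{n+1},$$
where the first term is $\pi_2X(k\oplus k[1])=H^{n-1}$. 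Only $H^{n-1}$, $H^n$ and $H^{n+1}$ of $R$ enter, and $f$ induces isomorphisms on all three. If $H^{n-1}=0$, the five lemma shows inductively that $\pi_1$ and $\pi_0$ are isomorphisms at every finite stage. If instead $H^{n-2}(R)\to H^{n-2}(R')$ is surjective, $\pi_1$ can only be controlled up to a surjection, so we run a four-lemma style argument. The one extra ingredient is $\pi_2X(k[x]/x^2)=H^{n-2}$, which enters when checking surjectivity on $\pi_1$ at the base stage. This gives surjectivity on $\pi_1$ and bijectivity on $\pi_0$ at each stage.

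The main obstacle is the nonabelian bookkeeping at the $\pi_0$ level. Here $\pi_0$ is only a pointed set, and $\pi_1X(k\oplus k[1])=H^n$ acts on the fibers of $\pi_0X(k[x]/x^{m+1})\to\pi_0X(k[x]/x^m)$ through the connecting map. So "isomorphism" must be checked fiberwise: over every point we must match both the obstruction class in $H^{n+1}$ and the stabilizers, and the stabilizers are images of $\pi_1X(k[x]/x^m)$ at that basepoint. This forces us to prove the $\pi_1$ statements at all basepoints, not only the trivial one. We would do this by rebasing: the fiber over a point $\xi$ is the analogous fiber sequence for the twisted problem, and its homotopy is still computed from the same $H^{n-1},H^n,H^{n+1}$. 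Finally, we apply the Mittag-Leffler/${\lim}^1$ comparison, using surjectivity on $\pi_1$ to ensure the ${\lim}^1$ terms map onto each other compatibly, which is enough to conclude the bijection on $\pi_0X(k[[x]])$.
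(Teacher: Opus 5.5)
Your finite-stage induction is the paper's own: the same square $k[x]/x^{m+1}\simeq k[x]/x^m\times_{k\oplus k[1]}k$, the same long exact sequences, and the five lemma for $\pi_0$. Your case $H^{n-1}(R)=0$ is fine. Your remark that $\pi_1$ must be controlled at every basepoint, using that each nonempty fibre is a torsor for $\Omega^\infty(R[n])$, is a real point that the paper passes over.

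The gap is in the other case, at the passage to $k[[x]]$. There you only show that $\pi_1X(k[x]/x^m)\to\pi_1X'(k[x]/x^m)$ is surjective, and then assert that this suffices for the ${\lim}^1$ comparison. It does not. A levelwise surjection of towers induces a surjection on ${\lim}^1$, but not an injection. For instance, the tower of inclusions $\cdots\subset\bigoplus_{i\geq 2}k\subset\bigoplus_{i\geq 1}k$ maps levelwise onto the zero tower, yet its ${\lim}^1$ is $\prod_{i\geq 1}k/\bigoplus_{i\geq 1}k\neq 0$. The fibres of $\pi_0\lim_m X(k[x]/x^m)\to\lim_m\pi_0X(k[x]/x^m)$ are the ${\lim}^1_m\pi_1$ taken at compatible basepoints. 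So what you actually prove is only that $\pi_0X(k[[x]])\to\pi_0X'(k[[x]])$ is surjective; the ``compatible kernels'' you invoke are never produced.

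Relatedly, you invoke the hypothesis on $H^{n-2}$ at a place where it does no work. At the base stage, $\pi_1X(k[x]/x^2)=H^{n-1}(R)$ is already an isomorphism by assumption. At later stages, surjectivity on $\pi_1$ follows from the four lemma applied to $H^{n-1}\to\pi_1X(k[x]/x^{m+1})\to\pi_1X(k[x]/x^m)\to H^n$, with no input from $H^{n-2}$.

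The missing step is the paper's third inductive claim: $\pi_2X(k[x]/x^m)\to\pi_2X'(k[x]/x^m)$ is surjective.
\begin{itemize}
\item It holds for $m=2$ because $\pi_2X(k[x]/x^2)=H^{n-2}(R)$.
\item It propagates by the four lemma applied to $H^{n-2}(R)=\pi_3X(k\oplus k[1])\to\pi_2X(k[x]/x^{m+1})\to\pi_2X(k[x]/x^m)\to H^{n-1}(R)$.
\item The five lemma applied to $\pi_2X(k[x]/x^m)\to H^{n-1}\to\pi_1X(k[x]/x^{m+1})\to\pi_1X(k[x]/x^m)\to H^n$ then upgrades $\pi_1$ to an isomorphism at each stage, and at each basepoint using your rebasing.
\end{itemize}
With $\pi_1$ an isomorphism at every stage, the ${\lim}^1$ terms agree, and injectivity on $\pi_0X(k[[x]])$ follows.
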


Note that we can get an augmented $\E_n$-algebra $k\oplus R$ from a non-unital algebra $R$, so here the mapping space $\Map_{\Alg_{k}^{(n)}}(\mathscr{D}^{(n)}(k[[x]]),R)$ actually means $\Map_{\Alg_{k}^{(n),\aug}}(\mathscr{D}^{(n)}(k[[x]]),k\oplus R)$.

\begin{proof}
The proof is similar to the proof above when computing $\pi_0\E_2\mathrm{CatDef}_{\Rep(G)}^c(k[[x]])$. At first, we see 
$$\pi_0\Map_{\Alg_{k}^{(n)}}(\mathscr{D}^{(n)}(k[x]/x^2),R)=\pi_0\Map_{\Alg_{k}^{(n)}}(\mathrm{Free}_{\E_n}(k[-n]),R)\cong H^n(R)\cong H^n(R')$$
and similarly 
\begin{align*}
    &\pi_1\Map_{\Alg_{k}^{(n)}}(\mathscr{D}^{(n)}(k[x]/x^2),R)\cong H^{n-1}(R)\cong H^{n-1}(R')\\
    &\pi_2\Map_{\Alg_{k}^{(n)}}(\mathscr{D}^{(n)}(k[x]/x^2),R)\cong H^{n-2}(R)
\end{align*}
We prove by induction that 
\begin{align*}
    \pi_0\Map_{\Alg_{k}^{(n)}}(\mathscr{D}^{(n)}(k[x]/x^m),R)\cong \pi_0\Map_{\Alg_{k}^{(n)}}(\mathscr{D}^{(n)}(k[x]/x^m),R')\\
    \pi_1\Map_{\Alg_{k}^{(n)}}(\mathscr{D}^{(n)}(k[x]/x^m),R)\cong\pi_1\Map_{\Alg_{k}^{(n)}}(\mathscr{D}^{(n)}(k[x]/x^m),R')
\end{align*}
If $H^{n-1}(R)\cong H^{n-1}(R')\ne 0$ and $H^{n-2}(R)\rightarrow H^{n-2}(R')$ is surjective, we also need to prove by induction that
$$\pi_2\Map_{\Alg_{k}^{(n)}}(\mathscr{D}^{(n)}(k[x]/x^m),R)\rightarrow \pi_2\Map_{\Alg_{k}^{(n)}}(\mathscr{D}^{(n)}(k[x]/x^m),R')$$
is surjective. Now consider the pullback diagram, 
\[\begin{tikzcd}
	{k[x]/x^{m+1}} & k \\
	{k[x]/x^{m}} & {k\oplus k[1]}
	\arrow[from=1-1, to=1-2]
	\arrow[from=1-1, to=2-1]
	\arrow["\lrcorner"{anchor=center, pos=0.125}, draw=none, from=1-1, to=2-2]
	\arrow[from=1-2, to=2-2]
	\arrow[from=2-1, to=2-2]
\end{tikzcd}\]
and then 
$$\Map_{\Alg_{k}^{(n)}}(\mathscr{D}^{(n)}(k[x]/x^{m+1}),R)\simeq \mathrm{fib}\Big(\Map_{\Alg_{k}^{(n)}}(\mathscr{D}^{(n)}(k[x]/x^{m}),R)\rightarrow \Map_{\Alg_{k}^{(n)}}(\mathscr{D}^{(n)}(k\oplus k[1]),R)\Big)$$
where $\mathscr{D}^{(n)}(k\oplus k[1])=\mathrm{Free}_{\E_n}(k[-n-1])$, so that we have
\begin{align*}
&\pi_0\Map_{\Alg_{k}^{(n)}}(\mathscr{D}^{(n)}(k\oplus k[1]),R)\cong H^{n+1}(R)\cong H^{n+1}(R')\\
&\pi_1\Map_{\Alg_{k}^{(n)}}(\mathscr{D}^{(n)}(k\oplus k[1]),R)\cong H^{n}(R)\cong H^{n}(R')\\
&\pi_2\Map_{\Alg_{k}^{(n)}}(\mathscr{D}^{(n)}(k\oplus k[1]),R)\cong H^{n-1}(R)\cong H^{n-1}(R')
\end{align*}
Now we get a long exact sequence
\[\begin{tikzpicture}[descr/.style={fill=white,inner sep=1.5pt}]
        \matrix (m) [
            matrix of math nodes,
            row sep=1em,
            column sep=2.5em,
            text height=1.5ex, text depth=0.25ex
        ]
        { H^{n-2}(R) & \pi_2\Map_{\Alg_{k}^{(n)}}(\mathscr{D}^{(n)}(k[x]/x^{m+1}),R) & \pi_2\Map_{\Alg_{k}^{(n)}}(\mathscr{D}^{(n)}(k[x]/x^{m}),R) & H^{n-1}(R) \\
            & \pi_1\Map_{\Alg_{k}^{(n)}}(\mathscr{D}^{(n)}(k[x]/x^{m+1}),R) & \pi_1\Map_{\Alg_{k}^{(n)}}(\mathscr{D}^{(n)}(k[x]/x^{m}),R)&H^{n}(R) \\
            & \pi_0\Map_{\Alg_{k}^{(n)}}(\mathscr{D}^{(n)}(k[x]/x^{m+1}),R) & \pi_0\Map_{\Alg_{k}^{(n)}}(\mathscr{D}^{(n)}(k[x]/x^{m}),R) & H^{n+1}(R) \\
        };

        \path[overlay,->, font=\scriptsize,>=latex]
        (m-1-1) edge (m-1-2)
        (m-1-2) edge (m-1-3)
        (m-1-3) edge (m-1-4)
        (m-1-4) edge[out=355,in=175] node[descr,yshift=0.3ex] {} (m-2-2)
        (m-2-2) edge (m-2-3)
        (m-2-3) edge (m-2-4)
        (m-2-4) edge[out=355,in=175] node[descr,yshift=0.3ex] {} (m-3-2)
        (m-3-2) edge (m-3-3)
        (m-3-3) edge (m-3-4);
\end{tikzpicture}\]
By five lemma, we get an isomorphism 
$$ \pi_0\Map_{\Alg_{k}^{(n)}}(\mathscr{D}^{(n)}(k[x]/x^{m+1}),R)\xrightarrow{\sim}\pi_0\Map_{\Alg_{k}^{(n)}}(\mathscr{D}^{(n)}(k[x]/x^{m+1}),R')$$
If $H^{n-1}(R)\cong H^{n-1}(R')=0$, we conclude 
$$ \pi_1\Map_{\Alg_{k}^{(n)}}(\mathscr{D}^{(n)}(k[x]/x^{m+1}),R)\xrightarrow{\sim}\pi_1\Map_{\Alg_{k}^{(n)}}(\mathscr{D}^{(n)}(k[x]/x^{m+1}),R')$$
and moreover, they are actually $0$ by induction. If $H^{n-1}(R)\cong H^{n-1}(R')\ne0$ and $H^{n-2}(R)\rightarrow H^{n-2}(R')$ is surjective, by assumption that
$$\pi_2\Map_{\Alg_{k}^{(n)}}(\mathscr{D}^{(n)}(k[x]/x^m),R)\rightarrow \pi_2\Map_{\Alg_{k}^{(n)}}(\mathscr{D}^{(n)}(k[x]/x^m),R')$$
is surjective, the isomorphism of $\pi_1$ is from the five lemma. The four lemma will imply 
$$\pi_2\Map_{\Alg_{k}^{(n)}}(\mathscr{D}^{(n)}(k[x]/x^{m+1}),R)\rightarrow \pi_2\Map_{\Alg_{k}^{(n)}}(\mathscr{D}^{(n)}(k[x]/x^{m+1}),R')$$
is surjective. This completes the induction. Next using the Milnor's sequence for $k[[x]]=\lim k[x]/x^m$, we conclude
$$\pi_0\Map_{\Alg_{k}^{(n)}}(\mathscr{D}^{(n)}(k[[x]]),R)\xrightarrow{\sim}\pi_0\Map_{\Alg_{k}^{(n)}}(\mathscr{D}^{(n)}(k[[x]]),R')$$
\end{proof}

\begin{theorem}\label{unique}
Let $X:\Alg_{k}^{(n), \mathrm{Art}}\rightarrow s\mathbf{Set}$ be a formal $\E_{n}$-moduli problem corresponding to the non-unital $\E_n$-algebra $R$ for $n\geq 2$. If $H^n(R)\cong k$ is one dimensional and $H^{n-1}(R)=H^{n+1}(R)=0$, then $\pi_0X(k[[x]])\simeq \End(k[[x]])$. In particular, formal deformations in $\pi_0X(k[[x]])$ which induce non-trivial first order deformations in $\pi_0X(k[x]/x^2)\cong H^n(R)\cong k$ are unique up to change of parameter i.e. $\mathrm{Aut}(k[[x]])$.
\end{theorem}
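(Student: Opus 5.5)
The plan is to compare $X$ with the formal $\E_n$-moduli problem corepresented by a "formal line", using Lemma \ref{compare}. The answer $\End(k[[x]])$ suggests that $X$ behaves like $\mathrm{Spf}$ of a one-dimensional formal disk. Here $\End(k[[x]])$ should be read as augmented continuous endomorphisms, i.e.\ $x\cdot k[[x]]$, and the model for the disk is the $\E_n$-algebra $\widehat{F}$, the completion of $\mathrm{Free}_{\E_n}(k)$ on one generator $t$ of degree $0$.

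\textbf{Step 1: the model moduli problem.} By the Example in Section 1 with $m=-n$, we have $k\oplus k[-n]\simeq \mathscr{D}^{(n)}(\mathrm{Free}_{\E_n}(k))$. So the formal $\E_n$-moduli problem $Y:=\Psi(k\oplus k[-n])$ has tangent complex $k$ concentrated in degree $0$. Here $k[-n]$ is the square-zero non-unital algebra. I expect $Y$ to be corepresented by $\widehat{F}$, in the sense that
$$Y(B)\simeq \Map_{\Alg^{(n),\aug}_k}(\widehat{F},B)$$
for Artin $B$, extended to pro-Artin $B$ by taking limits. For $B=k[[x]]$, which is commutative and discrete, a map out of the (completed) free algebra is just a choice of image of $t$ in $\m_{k[[x]]}$. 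Hence $\pi_0Y(k[[x]])\cong x\cdot k[[x]]$, and the higher homotopy vanishes. Alternatively, I would read this off directly from Milnor sequences as in Prop.~\ref{RepG}: all $\pi_1$ terms vanish because $H^{n\pm1}(k[-n])=0$.

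\textbf{Step 2: a comparison map.} Since $H^{n+1}(R)=0$, the long exact sequences used in Lemma \ref{compare} show that every first order deformation lifts order by order. So we may choose $\xi\in\pi_0X(k[[x]])$ whose image in $\pi_0X(k[x]/x^2)\cong H^n(R)\cong k$ is nonzero. Through $\Psi$ and the corepresentability of Step 1, $\xi$ is the same as a map of formal moduli problems $Y\to X$, equivalently a map of non-unital $\E_n$-algebras $\phi:k[-n]\to R$. On $H^n$ the map $\phi$ is the first-order class of $\xi$, hence an isomorphism $k\to H^n(R)$. On $H^{n-1}$ and $H^{n+1}$ both sides vanish, so $\phi$ is an isomorphism there too, and $H^{n-1}=0$.

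\textbf{Step 3: conclusion.} Lemma \ref{compare} applied to $\phi$ gives a bijection $\pi_0Y(k[[x]])\xrightarrow{\sim}\pi_0X(k[[x]])$. This bijection is precomposition: an endomorphism $f$ of $k[[x]]$, i.e.\ $t\mapsto f(x)$, goes to the pullback $f^*\xi$. Thus $\pi_0X(k[[x]])\cong\End(k[[x]])$. The deformations with nontrivial first order part correspond exactly to those $f$ with nonzero linear coefficient, which are the elements of $\mathrm{Aut}(k[[x]])$. This gives the uniqueness statement up to change of parameter.

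The main obstacle is Step 1: justifying that $\Psi(k\oplus k[-n])$ is really corepresented by the pro-Artin algebra $\widehat{F}$. This needs Koszul biduality in a regime outside the coconnective, locally finite hypotheses quoted from \cite[Thm. 15.2.2.1]{SAG}. It also needs the identification $\Map_{\Alg^{(n),\aug}_k}(\mathscr{D}^{(n)}(k[[x]]),k\oplus k[-n])\simeq \lim_m \Map(\widehat{F},k[x]/x^m)$. If this is troublesome, I would instead bypass corepresentability. Concretely, I would compute $\pi_0Y(k[x]/x^m)$ inductively as in Prop.~\ref{RepG} to get a $k^{m-1}$, the torsor description matching truncated power series, and then identify the transition maps with truncation of $x\cdot k[[x]]$.
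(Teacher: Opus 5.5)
\textbf{There is a genuine gap, in Step 2, where you identify $\xi\in\pi_0X(k[[x]])$ with a map $Y\to X$.} Yoneda together with your Step 1 gives $\Map(Y,X)\simeq X(\widehat F)$, not $X(k[[x]])$. The two agree only if $\widehat F\simeq k[[x]]$ as $\E_n$-algebras, i.e.\ only if $\mathscr{D}^{(n)}(k[[x]])=\mathrm{colim}_m\,\mathscr{D}^{(n)}(k[x]/x^m)$ is the trivial square-zero algebra $k\oplus k[-n]$.

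This holds for $n$ odd but fails for $n$ even. In arity two, $\mathrm{Free}_{\E_n}(k)$ has homology $H_*(\mathbb{RP}^{n-1};k)$. So for $n$ even it contains the Browder bracket $[t,t]\neq 0$ in degree $n-1$, and this bracket vanishes in the commutative algebra $k[x]$. Correspondingly, as the paper states, $\mathscr{D}^{(n)}(k[[x]])\simeq\mathrm{Sym}(k[-n])$. For $n$ even this is a polynomial algebra on a degree-$n$ class, so $\mathrm{Spf}(k[[x]])$ has tangent complex $\bigoplus_{j\ge 0}k[-nj]$, while $\mathbb{T}_Y=k$.

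Your $Y$ and $\mathrm{Spf}(k[[x]])$ do agree on discrete commutative test algebras (both give $\mathfrak{m}_B$), which is why your count in Step 1 is right. But a point of $X(k[[x]])$ is a map $\Psi(\mathrm{Sym}(k[-n]))\to X$, and in general it does not factor through the quotient $\mathrm{Sym}(k[-n])\to k\oplus k[-n]$.

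In fact the $\phi$ you need can fail to exist. Take $R=\overline{\mathrm{Sym}}(k[-n])$ with $n$ even; it satisfies all the hypotheses. An $\E_n$-map $k\oplus k[-n]\to\mathrm{Sym}(k[-n])$ that is an isomorphism on $H^n$ would Koszul-dualize to an $\E_n$-map $k[[x]]\to\widehat F$ with $x\mapsto f=ct+\cdots$, $c\neq 0$. This is impossible, because $[f,f]=(f')^2[t,t]\neq 0$ in $H_*(\widehat F)$ while $[x,x]=0$. The even case matters here: the theorem is applied with $n=4$ to $\E_2$-monoidal deformations of $\mathbf{Rep}(G)$.

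\textbf{The repair is to use $\mathscr{D}^{(n)}(k[[x]])\simeq\mathrm{Sym}(k[-n])$ itself as the model, which is what the paper does.} Then $\xi$ is literally a map $\mathscr{D}^{(n)}(k[[x]])\to k\oplus R$, and no Yoneda step is needed. Lemma~\ref{compare} applies to $\overline{\mathrm{Sym}}(k[-n])\to R$, since $\overline{\mathrm{Sym}}(k[-n])$ has $H^n=k$ and $H^{n\pm1}=0$ for $n\ge 2$.

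The source side is $\pi_0\Map(\mathscr{D}^{(n)}(k[[x]]),\mathscr{D}^{(n)}(k[[x]]))\cong\mathrm{End}(k[[x]])$. To see this, evaluate $\mathrm{Spf}(k[[x]])$ on the $k[x]/x^m$ and use that, for $n\ge 2$, $\E_n$-maps between discrete commutative algebras are ring maps. Your Step 3, with $f\mapsto f^*\xi$, then goes through unchanged.

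The obstacle you flagged is not the real one. Corepresentability of $Y$ by $\widehat F$ on Artin algebras follows directly from $\Map(\mathscr{D}^{(n)}B,\mathscr{D}^{(n)}A)\simeq\Map(A,\mathscr{D}^{(n)}\mathscr{D}^{(n)}B)$ and biduality for Artin $B$. The real issue is that $\widehat F\not\simeq k[[x]]$ when $n$ is even. For $n$ odd your argument is correct as written, since then $\mathrm{Free}_{\E_n}(k)\simeq k[t]$ rationally.
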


\begin{proof}
Because $H^n(R)\cong k$ is one dimensional, by taking a generator we can get a morphism $\mathscr{D}^{(n)}(k[x]/x^2)\rightarrow k\oplus R$ of augmented $\E_n$-algebras which induces an isomorphism 
\[\begin{tikzcd}
	{H^n(\mathscr{D}^{(n)}(k[x]/x^2))=H^n(\mathrm{Free}_{\E_n}(k[-n]))\cong k} & {H^n(R)\cong k}
	\arrow["\sim", from=1-1, to=1-2]
\end{tikzcd}\]
Applying $\mathscr{D}^{(n)}$ to the following pullback
\[\begin{tikzcd}
	{k[x]/x^{3}} & k \\
	{k[x]/x^{2}} & {k\oplus k[1]}
	\arrow[from=1-1, to=1-2]
	\arrow[from=1-1, to=2-1]
	\arrow["\lrcorner"{anchor=center, pos=0.125}, draw=none, from=1-1, to=2-2]
	\arrow[from=1-2, to=2-2]
	\arrow[from=2-1, to=2-2]
\end{tikzcd}\]
we get a pushout diagram
\[\begin{tikzcd}
{\mathscr{D}^{(n)}(k\oplus k[1])} \arrow[d] \arrow[r]           & k \arrow[d] \arrow[rdd, bend left] &           \\
{\mathscr{D}^{(n)}(k[x]/x^2)} \arrow[r] \arrow[rrd, bend right] & {\mathscr{D}^{(n)}(k[x]/x^3)}      &           \\
                                                                &                                    & k\oplus R
\end{tikzcd}\]
where $\mathscr{D}^{(n)}(k\oplus k[1])=\mathrm{Free}_{\E_n}(k[-n-1])$. Since $H^{n+1}(R)=0$, the diagram above is commutative up to homotopy and then there is a map $\mathscr{D}^{(n)}(k[x]/x^3)\rightarrow k\oplus R$ extending $\mathscr{D}^{(n)}(k[x]/x^2)\rightarrow k\oplus R$. This argument implies there should be maps $\mathscr{D}^{(n)}(k[x]/x^{n})\rightarrow k\oplus R$ and hence $\mathscr{D}^{(n)}(k[[x]])\rightarrow k\oplus R$ making the following diagram commutative
\[\begin{tikzcd}
	{\mathscr{D}^{(n)}(k[x]/x^2)} & {\mathscr{D}^{(n)}(k[x]/x^3)} & \cdots & {\mathrm{colim}_m\mathscr{D}^{(n)}(k[x]/x^m)\simeq \mathscr{D}^{(n)}(k[[x]])} \\
	&& {k\oplus R}
	\arrow[from=1-1, to=1-2]
	\arrow[from=1-1, to=2-3]
	\arrow[from=1-2, to=1-3]
	\arrow[from=1-2, to=2-3]
	\arrow[from=1-3, to=1-4]
	\arrow["\exists", dashed, from=1-4, to=2-3]
\end{tikzcd}\]
$\mathscr{D}^{(n)}(k[[x]])$ is actually the free $\E_\infty$-algebra with one generator at degree $n$, viewed as an $\E_n$-algebra. So the morphism $\mathscr{D}^{(n)}(k[[x]])_{>0}\rightarrow R$ of (non-unital) $\E_n$-algebras satisfies the Lemma above and we get equivalences
\begin{align*}
    \pi_0X(k[[x]])&=\pi_0\Map_{\Alg_{k}^{(n),\aug}}(\mathscr{D}^{(n)}(k[[x]]),k\oplus R)\\
    &\simeq\pi_0\Map_{\Alg_{k}^{(n),\aug}}(\mathscr{D}^{(n)}(k[[x]]),\mathscr{D}^{(n)}(k[[x]]))\\
    &\simeq \pi_0\Map_{\Alg_{k}^{(n),\aug}}(k[[x]],k[[x]])\\
    &\simeq \End_{\mathbf{CAlg}_{k}^{\heartsuit,\aug}}(k[[x]])
\end{align*}
So that there is a natural action of $\mathrm{Aut}(k[[x]])$ on $\pi_0X(k[[x]])$ given by composition. Then formal deformations, viewed as endomorphisms of $k[[x]]$ of the form $x\mapsto a_1x+a_2x^2+\cdots$ such that $a_1\ne0$, are just those inducing non-trivial first order deformations. For any two such formal deformations, there exists an automorphism of $k[[x]]$ sending one to the other. This means these formal deformations are unique up to change of parameter.
\end{proof}

\begin{corollary}
Let $G$ be a reductive group whose corresponding Lie algebra $\g$ is simple. Then non-trivial $\E_1$ and $\E_2$-monoidal formal deformations of $\Rep(G)$ inducing non-trivial first order deformations are unique up to equivalences and change of parameter.
\end{corollary}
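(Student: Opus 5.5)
The corollary should follow by applying Theorem \ref{unique} to the two formal moduli problems $\E_1\mathrm{CatDef}_{\Rep(G)}^c$ and $\E_2\mathrm{CatDef}_{\Rep(G)}^c$. By Theorem \ref{BG}, for $m=1,2$ the naive problem $\E_m\mathrm{CatDef}_{\Rep(G)}^c$ is already a formal $\E_{m+2}$-moduli problem. It is controlled by the non-unital $\E_{m+2}$-algebra
$$R_m:=\mathrm{fib}\big(Sym_k(\g[-m])^G\rightarrow k\big),$$
and we have $n=m+2\geq 3\geq 2$. Since the augmentation $Sym_k(\g[-m])^G\rightarrow k$ is projection onto the weight-zero summand, $R_m\simeq\bigoplus_{i\geq 1}(Sym^i(\g[-m]))^G$. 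Taking $G$-invariants is exact in characteristic $0$, so the cohomology is computed summand by summand: $Sym^i(\g[-m])$ is concentrated in degree $im$. For $m$ odd it equals $\wedge^i\g$ placed in degree $i$; for $m$ even it equals $Sym^i\g$ placed in degree $2i$.

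The plan is then to verify the hypotheses $H^{n}(R_m)\cong k$ and $H^{n-1}(R_m)=H^{n+1}(R_m)=0$.

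For $m=1$ we have $n=3$, so we need $H^2$, $H^3$ and $H^4$. These are $(\wedge^2\g)^G$, $(\wedge^3\g)^G$ and $(\wedge^4\g)^G$. For simple $\g$ these are the Lie algebra cohomology groups $H^2(\g,k)=0$, $H^3(\g,k)\cong k$ and $H^4(\g,k)=0$, as already used in the Davydov--Yetter example.

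For $m=2$ we have $n=4$, and all cohomology of $R_2$ sits in even degrees, so $H^3=H^5=0$ automatically. The remaining group is $H^4=(Sym^2\g)^G\cong k$ for simple $\g$, which is the Killing form line, as in Proposition \ref{RepG} via \cite[Prop. 2.16]{Saf21}.

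The only point needing care is the identification of invariants with Lie algebra cohomology when $G$ is merely reductive with simple $\g$. Here $\g$ simple forces $G^\circ$ to be semisimple. I would argue that $G$-invariants inject into $\g$-invariants, and that the Killing form and the Cartan $3$-form $\langle x,[y,z]\rangle$ are $\mathrm{Ad}(G)$-invariant. Hence the one-dimensional spaces survive, and the vanishing statements pass to the subspaces of invariants.

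With the hypotheses in place, Theorem \ref{unique} gives $\pi_0\E_m\mathrm{CatDef}_{\Rep(G)}^c(k[[x]])\simeq\End(k[[x]])$. Moreover, formal deformations with non-trivial first-order term form a single orbit under $\mathrm{Aut}(k[[x]])$. Since points of $\pi_0$ are equivalence classes of deformations, this is exactly uniqueness up to equivalence and change of parameter. I expect no real obstacle beyond the invariant-theory check for disconnected $G$.
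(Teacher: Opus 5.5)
Your proposal is correct and follows the paper's intended route: the corollary is Theorem~\ref{unique} applied to $R_m=\mathrm{fib}\big(Sym_k(\g[-m])^G\rightarrow k\big)$ for $m=1,2$ (so $n=m+2$, via Theorem~\ref{BG}). The hypotheses come from $(\wedge^2\g)^G=(\wedge^4\g)^G=0$ and $(\wedge^3\g)^G\cong k$ in the $\E_1$ case, and from even-degree concentration together with $(Sym^2\g)^G\cong k$ in the $\E_2$ case. Your explicit check of $H^{n-1}(R_m)=0$ and your treatment of disconnected $G$ spell out what the paper leaves implicit. The one step you should also state is that $\E_m\mathrm{CatDef}_{\Rep(G)}^c(k[[x]])$ agrees with $\lim_j \E_m\mathrm{CatDef}_{\Rep(G)}^c(k[x]/x^j)$, which the paper records in Proposition~\ref{RepG} via \cite[Lem. 4.23]{BKP}.
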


\subsection{Deformations by Factorization Homology}
Let $\Mfld_n$ be the $\infty$-category of smooth topological manifolds of dimension $n$ such that for two manifolds $M$ and $N$, its mapping space is $\mathrm{Emd}(M,N)$ of embeddings equipped with the compact-open $C^\infty$ topology. If we suppose the tensor product is the disjoint union, then $\Mfld_n$ will be a symmetric monoidal $\infty$-category. Let $\Disk_n$ be the sub-$\infty$-category of $\Mfld_n$ consisting of disjoint unions of $\mathbb{R}^n$. We are interested in the $\infty$-operad $\Disk_{n/M}$ whose objects are embeddings from some open disk or $\mathbb{R}^n$ to $M$. If $M$ is contractible, \cite[Example 5.4.2.15]{HA} shows $\Disk_{n/M}$ is equivalent to $\mathbb{E}_{n}^{\otimes}$ the $\E_n$-operad.

We can also consider framed manifolds. Fix a space $B$ and a map $B\rightarrow BO(n)\simeq BGL(n)$. For a manifold $M$ of dimension $n$, there is a natural map $\tau_M:M\rightarrow BO(n)$ corresponding to its tangent bundle. So a \textit{$B$-framed $n$-manifold} is actually a lifting of $\tau_M$ i.e. the following commutative diagram in $s\Set$
\[\begin{tikzcd}
	& B \\
	M & {BO(n)}
	\arrow[from=1-2, to=2-2]
	\arrow[from=2-1, to=1-2]
	\arrow["{\tau_M}"', from=2-1, to=2-2]
\end{tikzcd}\]
The space of morphisms denoted by $\mathrm{Emd}^{B}(M,N)$ consists of embeddings preserving such $B$-framed structures. So that we get $\infty$-categories $\Mfld_{n}^{B}$ and $\Disk_{n}^{B}$. If we let $B\rightarrow BO(n)$ be $*\xrightarrow{\{\mathbb{R}^n\}}BO(n)$, \cite[Remark 2.28]{AF20} shows the resulting $\infty$-operad denoted by $\Disk_{n}^{fr}$ is equivalent to the $\E_n$-operad. Every framing in it gives a trivialization of the tangent bundle of some $M$.

Let $\Mfld_{n}^{\partial}$ be the $\infty$-category of $n$-manifolds possibly with boundary. Then $\Disk_{n}^{\partial}$ should consist of disjoint unions of $\mathbb{R}^n$ and $\mathbb{R}_{\geq 0}\times \mathbb{R}^{n-1}$. It is stated in \cite[Remark 2.37]{AF20} that together with the framing structure, $\Disk_{n}^{\partial,fr}$ is equivalent to the \textit{Swiss cheese operad}. A $\Disk_{n}^{\partial,fr}$-algebra is a pair $(A,V)$ of the image $(\mathbb{R}^n,\mathbb{R}_{\geq 0}\times \mathbb{R}^{n-1})$ such that $A$ is an $\E_n$-algebra and $V$ is an $\E_{n-1}$-algebra with an action from $A$ \cite[Thm. 3.3]{Vor98}. Following \cite{BZBJ}, here we only consider the case $V=A$.

One can also think about oriented manifolds and oriented embeddings. In this way, we get $\Mfld_{n}^{or}$ and $\Disk_{n}^{or}$. Algebras on $\Disk_{n}^{or}$ are \textit{framed $\E_n$-algebras}. For example, a framed $\E_2$-monoidal category is actually a balanced braided monoidal category.

\begin{definition}
    \normalfont
Let $\C$ be a sifted-completed symmetric monoidal $\infty$-category which means $\C$ admits small sifted colimits and the tensor product preserves small sifted colimits. For any $\E_n$-algebra $A$ in $\C$, the \textit{factorization homology with coefficients in $A$} defines an operadic left Kan extension along the inclusion $\Disk_{n}^{fr}\rightarrow \Mfld_{n}^{fr}$, whose value on a framed $n$-manifold $M$ is 
$$\int_{M}A:=\colim(\Disk_{n/M}^{fr}\rightarrow \Disk_{n}^{fr}\xrightarrow{A}\C)$$
For a framed $\E_n$-algebra $A$, $\int_M A$ is defined similarly for an oriented $n$-manifold $M$.
\end{definition}

In this subsection, we are more interested in the factorization homology of categories. Let $G$ be a reductive group. Suppose $\Rep_\hbar(G)$ is an $\E_2$-monoidal deformation quantization of $\Rep(G)$, e.g. the category of representations of the quantum group $U_{\hbar} \g$. In \cite{BZBJ}, for any framed surface $S$, the authors regard $\int_S \Rep_\hbar(G)$ as a deformation quantization of $\QCoh(\mathrm{LocSys}_G(S))$. It is actually motivated by the following proposition, which is a special case of \cite[Cor. 4.12]{BFN10}.

\begin{prop}
Let $G$ be a reductive group. Suppose $S$ is a topological surface with finitely many connected components. Then we have an equivalence
$$\int_S\Rep(G)\simeq \QCoh(\mathrm{LocSys}_G(S))$$
\end{prop}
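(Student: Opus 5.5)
The plan is to deduce the equivalence from the nonabelian Poincaré duality / excision property of factorization homology, together with the identification $\Rep(G)\simeq\QCoh(BG)$ and the fact that $BG$ is a perfect stack. First, I would reduce to connected surfaces. Factorization homology is symmetric monoidal with respect to disjoint union, so $\int_{S_1\sqcup S_2}\Rep(G)\simeq\int_{S_1}\Rep(G)\otimes\int_{S_2}\Rep(G)$. On the other side, $\mathrm{LocSys}_G(S_1\sqcup S_2)\simeq\mathrm{LocSys}_G(S_1)\times\mathrm{LocSys}_G(S_2)$. For perfect stacks, \cite{BFN10} gives $\QCoh(X\times Y)\simeq\QCoh(X)\otimes\QCoh(Y)$, and $\mathrm{LocSys}_G(S_i)=\Mapp(S_i,BG)$ is perfect because it is a finite limit of copies of $BG$. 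This settles the reduction.

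For a connected surface, I would use that $\Rep(G)$ is symmetric monoidal. In that case factorization homology over $S$ depends only on the underlying space of $S$: it equals the tensor of the commutative algebra $\QCoh(BG)\in\CAlg(\Pr^{St}_k)$ with the finite space $S$, that is, $\colim_{S}\QCoh(BG)$ taken in $\CAlg(\Pr^{St}_k)$. I would then write $S$, up to homotopy, as a finite CW complex obtained by iterated pushouts of points. The tensor $S\otimes\QCoh(BG)$ turns these pushouts into relative tensor products $\QCoh(BG)\otimes_{\QCoh(BG)\otimes\QCoh(BG)}\cdots$. Dually, $\Mapp(-,BG)$ turns the same pushouts of spaces into fiber products of stacks.

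The key input is the theorem of \cite{BFN10} that, for perfect stacks $X\to Z\leftarrow Y$, $\QCoh(X\times_Z Y)\simeq\QCoh(X)\otimes_{\QCoh(Z)}\QCoh(Y)$. I would also need that perfect stacks are closed under such fiber products. Since $BG$ is perfect (shown in the proof of Theorem \ref{BG}), induction on the number of cells gives $S\otimes\QCoh(BG)\simeq\QCoh(\Mapp(S,BG))=\QCoh(\mathrm{LocSys}_G(S))$. This is exactly the argument behind \cite[Cor. 4.12]{BFN10}.

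I expect the main obstacle to be justifying that the $\E_2$ factorization homology of a symmetric monoidal category agrees with the topological tensor over $S$. This step relies on the $\otimes$-excision characterization of factorization homology in \cite{AF20}. A second point requiring care is checking that each intermediate mapping stack remains perfect, so that the BFN tensor product formula applies at every stage of the induction.
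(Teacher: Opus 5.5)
Your proposal is correct and follows essentially the same route as the paper, which gives no argument of its own beyond citing \cite[Cor. 4.12]{BFN10}. What you outline is the argument behind that corollary: identify $\int_S$ of the commutative algebra $\Rep(G)\simeq\QCoh(BG)$ with $S\otimes\QCoh(BG)$, build $S$ from points by pushouts, and match these with fiber products of perfect stacks via the BFN tensor-product theorem. One caveat, which the paper's one-line citation also glosses over: writing $S$ as a finite CW complex requires $S$ to be of finite type (e.g.\ compact up to finitely many punctures), and ``finitely many connected components'' does not guarantee this (take a plane with infinitely many punctures, or a surface of infinite genus).
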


\begin{remark}
    \normalfont
In addition to $\Pr^L$ and $\Pr^{St}$, we can also let the target category $\C$ be $\Pr^{L,c}$ consisting of compactly generated presentable $\infty$-categories with compact small colimits preserving functors or its sub-$\infty$-category $\mathsf{Groth}^{\mathrm{cg}}_{\infty}$ following \cite[Prop. C.6.2.1]{SAG}. 

It is shown in \cite[Example 5.5.2.4]{HA} that if $\C$ is a symmetric monoidal $\infty$-category such that it admits small colimits and the tensor product preserves small colimits  separately in each variable, then for any small $\infty$-operad $\mathcal{O},$ $\Alg_{\mathcal{O}}(\C)$ will be sifted-complete symmetric monoidal $\infty$-category having small colimits. So that in particular, $\Alg_{\E_n}(\Pr^{L,c})$ is symmetric monoidal and sifted-complete.
\end{remark}

In \cite{KKMP}, the authors have studied the compatibility of factorization homology and deformations. In \cite[Thm. 6.8]{KKMP}, they prove that in a special case of $k[[\hbar]]\rightarrow k[\hbar]/\hbar^2$, the tensor product $-\otimes_{k[[\hbar]]}k[\hbar]/\hbar^2$ should commute with factorization homology. It means for a framed $m$-manifold $M$ and an $\E_n$-monoidal $k[[\hbar]]$-linear deformation $\C_\hbar$ of $\C$, we have an equivalence
$$(\int_M \C_\hbar)\otimes_{k[[\hbar]]}k[\hbar]/\hbar^2\simeq \int_M(\C_\hbar\otimes_{k[[\hbar]]}k[\hbar]/\hbar^2)$$
More generally, we can prove the following theorem at the $\infty$-categorical level.

\begin{theorem}
Suppose $\C$ is an $\E_n$-monoidal category in $\Pr_{k}^{L,c}$ and $M$ is a framed $m$-manifold with $m\leq n$. Then there is a morphism $\E_n\mathrm{CatDef}_{\C}^{c}\rightarrow \E_{n-m}\mathrm{CatDef}_{\int_M\C}^{c}$ of functors $\mathbf{CAlg}_{k}^{\mathrm{Art}}\rightarrow s\Set$ given by the factorization homology over $M$.
\end{theorem}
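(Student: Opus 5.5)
The plan is to construct the morphism objectwise on commutative Artin algebras $B$ and then check naturality, using that factorization homology is symmetric monoidal and commutes with base change along $\Mod_B$. For $B\in\CAlg_k^{\Art}$, $\LMod_B$ is an $\E_\infty$-algebra in $\Pr^{L,c}_k$. An $\E_n$-monoidal deformation $\C_B$ of $\C$ over $B$ (restricted along $\CAlg_k\rightarrow\Alg_k^{(n+2)}$) is an $\E_n$-algebra in $\Mod_{\LMod_B}(\Pr^{L,c}_k)$ with $\C_B\otimes_{\LMod_B}\Mod_k\simeq\C$. Since $M$ is framed of dimension $m\leq n$, restriction along $\E_m\otimes\E_{n-m}\simeq\E_n$ (Dunn additivity) makes $\C_B$ an $\E_m$-algebra in $\Alg_{\E_{n-m}}(\Mod_{\LMod_B}(\Pr^{L,c}_k))$. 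By the Remark above, this target is sifted-complete symmetric monoidal, so $\int_M\C_B$ is defined there and is an $\E_{n-m}$-monoidal compactly generated $B$-linear category.

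The key step is the base change formula
$$\Big(\int_M\C_B\Big)\otimes_{\LMod_B}\Mod_k\simeq\int_M\big(\C_B\otimes_{\LMod_B}\Mod_k\big)\simeq\int_M\C .$$
To prove it, observe that $-\otimes_{\LMod_B}\Mod_k:\Mod_{\LMod_B}(\Pr^{L,c}_k)\rightarrow\Pr^{L,c}_k$ is symmetric monoidal, since $\LMod_B$ is commutative, and it preserves all colimits, being a left adjoint. The induced functor on $\Alg_{\E_{n-m}}$ is then symmetric monoidal and preserves sifted colimits, because sifted colimits of algebras are computed underlying. Factorization homology is the colimit over $\Disk^{fr}_{n/M}$, which by \cite{AF20} can be written as a sifted colimit (a geometric realization of a bar construction or a colimit over a sifted category of disks) of tensor products of copies of $\C_B$. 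Hence it commutes with this functor. Equivalently, one can use the $\otimes$-excision characterization: both sides are symmetric monoidal functors $\Mfld^{fr}_m\rightarrow\Alg_{\E_{n-m}}$ satisfying excision and agreeing on $\mathbb{R}^m$, so they coincide.

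This gives a map of $\infty$-groupoids $\E_n\mathrm{CatDef}^c_\C(B)\rightarrow\E_{n-m}\mathrm{CatDef}^c_{\int_M\C}(B)$, $\C_B\mapsto\int_M\C_B$, together with the identification above. For functoriality in $B\rightarrow B'$, I would apply the same commutation argument to $-\otimes_{\LMod_B}\LMod_{B'}$, and package everything coherently as a map of left fibrations over $\CAlg_k^{\Art}$. That map is obtained by applying $\int_M$ fiberwise to the coCartesian fibration $\E_n\mathbf{RCat}_k\times_{\Alg^{(n+2)}_k}\CAlg_k\rightarrow\CAlg_k$; it sends coCartesian edges to coCartesian edges by the base change formula, and one then slices over $(k,\C)$.

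I expect the main obstacle to be compact generation together with the coherence of the base change. One must check that $\int_M\C_B$ is compactly generated and that $\int_M$ is computed in $\Pr^{L,c}$ compatibly with $\Pr^L$, which requires that the structure functors of $\C_B$ preserve compact objects. I would handle this by working throughout in $\Pr^{L,c}_k$ (or in $\mathsf{Groth}^{\mathrm{cg}}_\infty$, following \cite[Prop. C.6.2.1]{SAG}), where colimits are computed via $\mathrm{Ind}$ of colimits of small idempotent-complete categories, so compact generation is automatic. The restriction to commutative $B$ is exactly what makes $\LMod_B$ symmetric monoidal, so that the base change is symmetric monoidal and compatible with the $\E_m$-direction of the algebra structure.
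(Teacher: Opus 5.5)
Your proposal is correct and follows essentially the paper's argument: you use Dunn additivity to view a deformation as an $\E_m$-algebra in $\Alg_{\E_{n-m}}$ of compactly generated linear categories, and you observe that base change $-\otimes_A B$ is symmetric monoidal and preserves sifted colimits, since sifted colimits of algebras are computed underlying. From this you conclude $(\int_M \C_A)\otimes_A B\simeq \int_M(\C_A\otimes_A B)$; the paper gets this step by citing \cite[Prop.~5.5.2.17]{HA}, which is exactly the sifted-colimit argument you sketch, and your extra remarks on packaging the construction as a map of left fibrations and on compact generation address points the paper leaves implicit without changing the route.
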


\begin{proof}
For a map of commutative Artin algebras $A\rightarrow B$, we show the following diagram is commutative.
\[\begin{tikzcd}
	{\Alg_{\E_n}(\Pr_{A}^{L,c})\simeq \Alg_{\E_{m}}(\Alg_{n-m}(\Pr_{A}^{L,c}))} & {\Alg_{\E_n}(\Pr_{B}^{L,c})\simeq \Alg_{\E_{m}}(\Alg_{n-m}(\Pr_{B}^{L,c}))} \\
	{\Alg_{n-m}(\Pr_{A}^{L,c})} & {\Alg_{n-m}(\Pr_{B}^{L,c})}
	\arrow[from=1-1, to=1-2]
	\arrow["{\int_M}"', from=1-1, to=2-1]
	\arrow["{\int_M}", from=1-2, to=2-2]
	\arrow["{F=-\otimes_{A}B}"', from=2-1, to=2-2]
\end{tikzcd}\]
From the remark above, we know $\Alg_{n-m}(\Pr_{A}^{L,c})$ and $\Alg_{n-m}(\Pr_{B}^{L,c})$ are symmetric monoidal and sifted-complete, so they can be the target of factorization homology. Because the tensor product functor $-\otimes_A B:\Pr_{A}^{L,c}\rightarrow \Pr_{B}^{L,c}$ preserves colimits and sifted colimits in $\Alg_{n-m}(\Pr_{A}^{L,c})$ can be tested in $\Pr_{A}^{L,c}$ \cite[Prop. 3.2.3.1]{HA}, the functor $F$ in the diagram above should preserve sifted colimits. Then from \cite[Prop. 5.5.2.17]{HA}, for every $\E_n$-monoidal $A$-linear $\infty$-category $\D\in \Alg_{\E_n}(\Pr_{A}^{L,c})$, we have an equivalence
$$(\int_M \D)\otimes_A B\simeq \int_M(\D\otimes_A B) $$
We can suppose $\D=\C_A$ is an $\E_n$-monoidal deformation of $\C$ over $A$, then $\int_M \C_A$ is an $\E_{n-m}$-monoidal $A$-linear deformation of $\int_M \C$.
\end{proof}

\bibliographystyle{alpha}
\bibliography{ref} 

\end{document}